\DeclareMathOperator*{\Int}{Int}
\newtheorem*{h1*}{Hypothesis H1}
\newtheorem*{h2*}{Hypothesis H2}
\newcommand{\h}{\text{\calligra h}\,}
\newcounter{cases}
\newcounter{subcases}[cases]
\journal{Inverse Problems}
\begin{document}
\newcommand{\R}{\mathbb{R}}
\newcommand{\origin}{\mathcal{O}}
\newcommand{\sphere}{\mathbb{S}}

\newtheorem{Def}{Definition}[section]
\newtheorem{claim}{Claim}[section]
\newtheorem{lemma}{Lemma}[section]
\newtheorem{prop}{Proposition}[section]
\newtheorem{conj}{Conjecture}[section]
\newtheorem{theorem}{Theorem}[section]
\newtheorem{coro}{Corollary}[section]

\begin{frontmatter}

\title{Convex Solutions to the Virtual Source Reflector Problem}

%% Group authors per affiliation:
\author{Dylanger S. Pittman}
\address{400 Dowman Drive, Atlanta}

\ead{dpittm2@emory.edu}

\begin{abstract}
We greatly expand upon the results of Kochengin, Oliker and Tempeski \cite{KOT} to include results for uniqueness in the general case. We also include results for existence in the rotationally symmetric case and the case where the target set is sufficiently small. We also point out an error that was found in \cite{KOT}.
\end{abstract}

\begin{keyword}
 partial differential equations \sep geometric optics \sep geometry
\MSC[2020] 78A05 \sep  35 \sep 51\sep 53
\end{keyword}

\end{frontmatter}

\section{Introduction}

Let $\origin$ be the origin of $\R^3$, and let $\sphere^2$ be the unit sphere centered at $\origin$. We treat points on $\sphere^2$ as unit vectors with initial points at $\origin.$ Let an {\it aperture} be a subset of $\sphere^2$; in our work, the aperture will be an open set. Physically, it makes sense to consider $\origin$ as the location of an anisotropic point source of light such that rays of light are emitted in a set of directions defined by an aperture $D\subseteq \sphere^2$.

\begin{Def}\label{reflectorDef}
    Assume that we are given an aperture that is a connected open set $D\subseteq \sphere^2$, and a function $\rho:D\to (0,\infty)$ that is continuous and almost everywhere differentiable. Then a {\bf reflector} is the set $R=\{m\rho(m)|m\in D\}\subset \R^3$. 
\end{Def}

We first recall the classical law of reflection. Assume that we have a continuous, almost everywhere differentiable, positive function $\rho:D\to (0,\infty)$ and a corresponding reflector $R=\{m\rho(m)|m\in D\}.$ Suppose that a ray originating from $\origin$ in the direction $m\in D$ is incident on the reflector $R$ at the point $m\rho(m)$. If $\rho$ is differentiable at $m$, there is a unit vector, $n(m)$, normal to the reflector $R$ at $m\rho(m)$. Therefore, by the reflection law of geometric optics, a ray from $\origin$ of direction $m$ reflects off the point $m\rho(m)$ in the direction 
\begin{equation}\label{reflect}
    {y}(m)=m-2\langle m,n(m)\rangle n(m)
\end{equation}
where $\langle m,n(m)\rangle$ is the standard Euclidean inner product in $\R^3$ and $n(m)$ is oriented such that $\langle m,n(m)\rangle>0$ \cite{born_wolf_2019}. 

\begin{Def}\label{refractdef}
    Assume that we are given an aperture that is a connected open set $D\subseteq \sphere^2$.  Let $U$ be an open subset of $\sphere^2$ such that $D\subseteq U$. Consider a function $\rho:U\to (0,\infty)$ that is continuous and almost everywhere differentiable. Then a {\bf refractor} is the set $R=\{m\rho(m)|m\in U\}\subset \R^3$. 
\end{Def}

Note that $R=\{m\rho(m)|m\in D\}$ can be considered as either a reflector or a refractor. If $R=\{m\rho(m)|m\in D\}$ is considered as a refractor, the refracted direction $\hat{y}$ is determined by Snell's law and is given as 
\begin{equation}\label{refract}
    \hat{y}(m)=c_fm-\left(\sqrt{1-c_f^2(1-\langle m,n(m)\rangle^2)}-c_f\langle m,n(m)\rangle\right) n(m)
\end{equation}
where $c_f$ denotes the refraction index.

We borrow the following motivation from \cite{KOT}. Consider a two-sheeted hyperboloid of revolution with sheets $B$ and $H$. Let $\origin$ be the focus inside the convex body bounded by the first sheet $B$ and $x$ the focus inside the convex body bounded by the sheet $H$. Suppose that a point source of light is positioned at $\origin$ and the sheet $H$ is a reflector. $H$ has very special and important reflecting properties. Specifically, if a ray of direction $m$ from $\origin$ is incident on a point $z\in H$ and is reflected in the direction $y(m)$ as defined by (\ref{reflect}), then the ray from $z$ of direction $y(m)$ coincides with a ray from $x$ of direction $y(m)$. This means that the focus $x$ can be viewed, from a physical perspective, as a virtual source of rays reflected off $H$. A two-dimensional analog of this situation is illustrated in Figure \ref{figure_virtual-source}.

\begin{figure}[htbp]
\centering
\label{figure_virtual-source}
\def\svgwidth{0.9\textwidth}
%% Creator: Inkscape 1.1.1 (3bf5ae0d25, 2021-09-20), www.inkscape.org
%% PDF/EPS/PS + LaTeX output extension by Johan Engelen, 2010
%% Accompanies image file 'figure_virtual-source.pdf' (pdf, eps, ps)
%%
%% To include the image in your LaTeX document, write
%%   \input{<filename>.pdf_tex}
%%  instead of
%%   \includegraphics{<filename>.pdf}
%% To scale the image, write
%%   \def\svgwidth{<desired width>}
%%   \input{<filename>.pdf_tex}
%%  instead of
%%   \includegraphics[width=<desired width>]{<filename>.pdf}
%%
%% Images with a different path to the parent latex file can
%% be accessed with the `import' package (which may need to be
%% installed) using
%%   \usepackage{import}
%% in the preamble, and then including the image with
%%   \import{<path to file>}{<filename>.pdf_tex}
%% Alternatively, one can specify
%%   \graphicspath{{<path to file>/}}
%% 
%% For more information, please see info/svg-inkscape on CTAN:
%%   http://tug.ctan.org/tex-archive/info/svg-inkscape
%%
\begingroup%
  \makeatletter%
  \providecommand\color[2][]{%
    \errmessage{(Inkscape) Color is used for the text in Inkscape, but the package 'color.sty' is not loaded}%
    \renewcommand\color[2][]{}%
  }%
  \providecommand\transparent[1]{%
    \errmessage{(Inkscape) Transparency is used (non-zero) for the text in Inkscape, but the package 'transparent.sty' is not loaded}%
    \renewcommand\transparent[1]{}%
  }%
  \providecommand\rotatebox[2]{#2}%
  \newcommand*\fsize{\dimexpr\f@size pt\relax}%
  \newcommand*\lineheight[1]{\fontsize{\fsize}{#1\fsize}\selectfont}%
  \ifx\svgwidth\undefined%
    \setlength{\unitlength}{595.27559055bp}%
    \ifx\svgscale\undefined%
      \relax%
    \else%
      \setlength{\unitlength}{\unitlength * \real{\svgscale}}%
    \fi%
  \else%
    \setlength{\unitlength}{\svgwidth}%
  \fi%
  \global\let\svgwidth\undefined%
  \global\let\svgscale\undefined%
  \makeatother%
  \begin{picture}(1,1.41428571)%
    \lineheight{1}%
    \setlength\tabcolsep{0pt}%
    \put(0,0){\includegraphics[width=\unitlength,page=1]{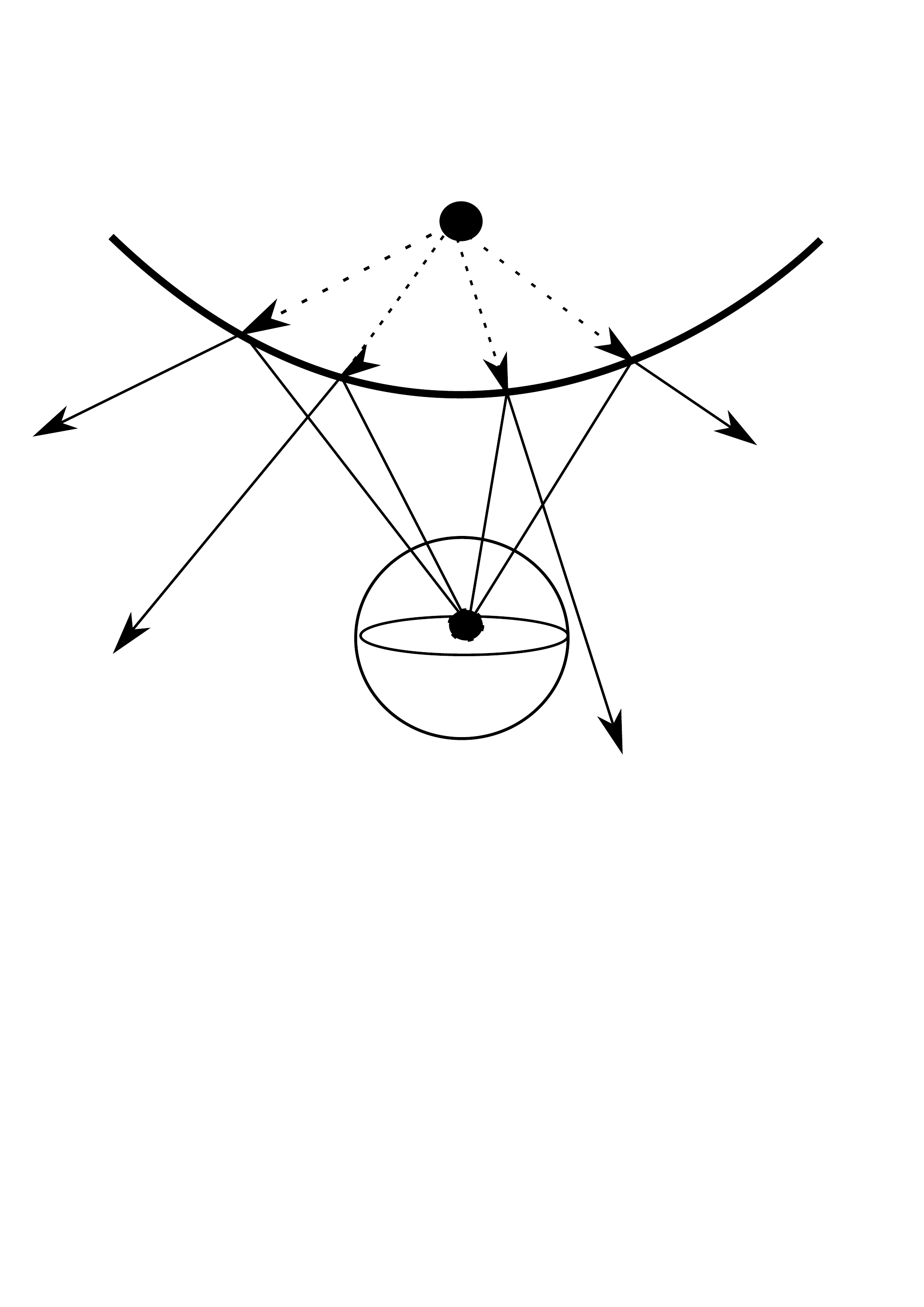}}%
    \put(0.0888283,0.65663263){\color[rgb]{0,0,0}\makebox(0,0)[lt]{\lineheight{1.25}\smash{\begin{tabular}[t]{l}$\mathbb{S}^2$ centered at the origin $\mathcal{O}$ \end{tabular}}}}%
    \put(0.57999653,0.90395853){\color[rgb]{0,0,0}\makebox(0,0)[lt]{\lineheight{1.25}\smash{\begin{tabular}[t]{l}light rays\end{tabular}}}}%
    \put(0.28738563,1.21921181){\color[rgb]{0,0,0}\makebox(0,0)[lt]{\lineheight{1.25}\smash{\begin{tabular}[t]{l}Target set consisting of a single point\end{tabular}}}}%
    \put(0.12366287,0.95969387){\color[rgb]{0,0,0}\makebox(0,0)[lt]{\lineheight{1.25}\smash{\begin{tabular}[t]{l}The hyperboloid $H$\end{tabular}}}}%
  \end{picture}%
\endgroup%

\caption[Virtual Source Reflector Illustration]{Here is an illustration of a virtual source reflector system where the target set is a single point. Note that all the rays of light reflect off of the hyperboloid $H$ such that it appears that the light is originating from the target point. }
\end{figure}

This same situation can also be interpreted from a different point of view allowing us to treat it geometrically as a refraction problem, rather than a reflection problem. Now suppose a light ray of direction $m$ from $\origin$ strikes $H$ and `refracts' such that the refracted direction is given by
\begin{equation}\label{reflect-refract}
    \hat{y}=-{y}=-m+2\langle m,n(m)\rangle n(m).
\end{equation}
Then since the refracted direction is the opposite of the reflected direction, every ray of direction $m$ that strikes the refractor $H$ will cross the focus $x$. Equation (\ref{reflect-refract}) can also be considered as the version equation of (\ref{refract}) where $c_f=-1.$ Under the law of total energy conservation, the total energy `delivered' by the refractor $H$ to the point $x$ will be equal to the total energy produced by the source $\origin$. We will only discuss this type of refraction, where $c_f=-1$, for the rest of the dissertation.

This interpretation of the reflection with a virtual source as a particular case of refraction is convenient from a geometric point of view and we use this terminology throughout this paper. Physically, however, it is more natural to treat the point $x$ as a virtual source. This would also be consistent with the case of a distributed virtual source; which we focus on. 

To quote \cite{KOT}: with this terminology, the problem studied in this paper can now be described as a problem of finding a convex refractor $R$ which will refract a given anisotropic bundle of rays from a source $\origin$ in such a way that the refracted rays are incident on a specified set in space and produce there, a given-in-advance intensity distribution. More specifically, suppose that we have a system consisting of an anisotropic point source at $\origin$, an aperture $D$, a nonnegative $g(m)\in L^1(\sphere^2)$, a target set $T\subset \R^3\setminus\{\origin\}$, and nonnegative integrable function $f$ defined on $T.$ The problem consists of finding a refractor $R$ which produces the specified in advance $f$ on $T.$ Henceforth, we call this problem the {\it refractor problem}.

The only previous work available with respect to this problem can be found in \cite{KOT}. In the paper, they develop a definition of a weak solution to a PDE of Monge Amp\`ere type; specifically, the PDE described by equation (4) in \cite{KOT}. They detail the construction of simply connected convex refractors and provide an existence theorem for the case where the target set is discrete (Theorem \ref{KOTthrm}). Due to the weak convergence of Dirac measures to Lebesgue measures, one can create refractors that produce discrete irradiance distributions that are arbitrarily close to a continuous distribution, like pixels in a photo. However, this does not imply the existence of a refractor that produces a continuous intensity distribution at the limit. This is expected for problems that can be described by a fully nonlinear PDE of Monge-Amp\'ere type \cite{Oliker1989}. However, if the refractors are convex, due to the unique properties that convexity provides, one can use the weak convergence of Dirac measures to Lebesgue measures to obtain a refractor that produces a continuous irradiance distribution; see \cite{OK1},\cite{para}. 
 
In this paper, we work on the weak formulation developed by \cite{KOT}, where we develop existence and uniqueness results. Due to a mistake in \cite{KOT} (see Section \ref{app:wrongKOT}), Theorem 9 in \cite{KOT} (that I later present as Theorem \ref{KOTthrm}) is the only existence theorem for the refractor problem. That theorem is hard to use, and, in its current form, it cannot be extended to the continuous case. However, Theorem 9 in \cite{KOT} can be used to prove another existence theorem for the discrete case (Theorem \ref{convRefr}) that, in turn, can be extended to the continuous case (Theorem \ref{continuous}). We use this result to then prove the existence of solutions for the rotationally symmetric case (Theorem \ref{cov-rot}). Additionally, we prove a uniqueness theorem for the case where the target set is finite (Theorem \ref{unique}) and for the general case (Theorem \ref{unique-continuous}).

\section{Hyperboloids of Revolution}\label{hyper-background}

We do all our work in $\R^3$. We denote $\sphere^2$ to be the unit sphere with the center at $\origin$ and $k_x=x/|x|$ for all $x\in \R^3\setminus\{\origin\}.$ We borrow much of this geometric setup from \cite{KOT}. Hyperboloids of revolution are of paramount importance when solving the virtual-source reflector problem due to their unique optical properties.

Consider the rotationally symmetric hyperboloid of two sheets in $\R^3$ such that one focus is $\origin$ and the other is $x;$ let $H(x)$ be the branch of the hyperboloid that has $x$ as a focus. From now on, when we use the term {\it hyperboloid}, we are only referring to this branch. 

With each hyperboloid $H(x)$ we associate its radial projection by rays from the origin onto an open spherical disk $D(x)\subset \sphere^2$ and its polar radius
\begin{equation}
    \h_\epsilon(m)=\frac{|x|(1-\epsilon^2)}{2\epsilon(1-\epsilon\langle m , k_{x}\rangle)}, \; m\in D(x)
\end{equation}
where $\epsilon$ is the eccentricity of the hyperboloid. Be aware that $\epsilon>1$ since we are describing a hyperboloid. 

Define $H_{\epsilon}(x)$ to be the hyperboloid with eccentricity $\epsilon$ and focus $x$. We now introduce a similar function $\h_{x,\epsilon}(m)$ which introduces $x\in \mathbb{R}^3\setminus \{\origin\}$ as a variable.  In this paper, we define $h_{x,\epsilon} (m)=m\h_{x,\epsilon}(m)$ for $m\in D(x)$ and $x\in \mathbb{R}^3\setminus \{\origin\}$. Let  $D_\epsilon(x)\subset \mathbb{S}^2$ be the preimage of $H_{\epsilon}(x)$ under $h_{x,\epsilon}$, then $D_\epsilon(x)=\{m\in \mathbb{S}^2|\frac{1}{\epsilon} <\langle m , k_{x}\rangle\}$. Thus we can easily verify that $H_\epsilon(x)=\{h_{x,\epsilon}(m)|m\in D_\epsilon(x)\}$.

From a physical perspective, $x$ being the focus means that all light from the origin reflected off of the reflector $H(x)$ appears to be originating from $x$, making $x$ a virtual source.

From the above work, we see by taking the eccentricity $\epsilon$ to infinity that the shape of the hyperboloid becomes a plane, which is the {\it directrix} of the hyperboloid, and $D_\epsilon(x)$ and goes to the hemisphere oriented towards $x$. The following two propositions summarize what I say precisely. 

\begin{prop}\label{eccdom}
 As the eccentricity $\epsilon$ of $H_{\epsilon}(x)$ goes to infinity, $D_\epsilon(x)$ goes to $\{m\in \sphere^2|\langle m,k_x\rangle\ge 0\}.$
\end{prop}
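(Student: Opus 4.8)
The plan is to work directly from the explicit description $D_\epsilon(x)=\{m\in\sphere^2 : \langle m,k_x\rangle > 1/\epsilon\}$ established just above, and to make precise the sense in which these spherical caps converge to the closed hemisphere $E:=\{m\in\sphere^2:\langle m,k_x\rangle\ge 0\}$ as $\epsilon\to\infty$. The essential observation is simply that $1/\epsilon\to 0$, so the defining inequality $\langle m,k_x\rangle>1/\epsilon$ degenerates to $\langle m,k_x\rangle>0$; the whole content of the proof is turning this into a genuine limit. I would interpret ``goes to'' as convergence in the Hausdorff metric $d_H$ on closed subsets of $\sphere^2$ (equivalently, as the monotone limit of the caps followed by taking closures) and prove $d_H(\overline{D_\epsilon(x)},E)\to 0$.

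First I would record the monotonicity: since $\epsilon\mapsto 1/\epsilon$ is strictly decreasing on $(1,\infty)$, the caps $D_\epsilon(x)$ are nested increasing, and $\bigcup_{\epsilon>1}D_\epsilon(x)=\{m:\langle m,k_x\rangle>0\}$ is exactly the open hemisphere, whose closure is $E$. This already identifies the limit set, leaving only quantitative control of the boundary. Because $D_\epsilon(x)\subset E$ for every $\epsilon$, one half of the Hausdorff distance, $\sup_{m\in D_\epsilon(x)}\mathrm{dist}(m,E)$, vanishes identically, so all the work is in bounding $\sup_{p\in E}\mathrm{dist}(p,D_\epsilon(x))$. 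Given $p\in E$ with $\langle p,k_x\rangle=:c\in[0,1/\epsilon]$ (points with $c>1/\epsilon$ already lie in $D_\epsilon(x)$), I would rotate $p$ toward $k_x$ along the great circle they span, which is well defined and unique because $0\le c<1$ forces $p\ne\pm k_x$. The angle between $p$ and $k_x$ is $\arccos c\le\pi/2$, and reducing it below $\arccos(1/\epsilon)$ yields a point of $D_\epsilon(x)$ after a rotation of at most $\pi/2-\arccos(1/\epsilon)=\arcsin(1/\epsilon)$. Hence every point of $E$ lies within geodesic distance $\arcsin(1/\epsilon)$ of $D_\epsilon(x)$, giving $d_H(\overline{D_\epsilon(x)},E)\le\arcsin(1/\epsilon)\to 0$.

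I expect the only genuine obstacle to be conceptual rather than computational: the equator $\{m:\langle m,k_x\rangle=0\}$ belongs to the limit $E$ but to no individual $D_\epsilon(x)$, so the convergence cannot be set-theoretic equality and must be phrased as a limit, whether via the Hausdorff metric or via the closure of the increasing union. Once that notion is fixed, the rotation estimate closes the argument and the proposition follows.
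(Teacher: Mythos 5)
Your proof is correct. The paper offers no proof of this proposition at all --- it is stated as an immediate summary of the preceding observation that $D_\epsilon(x)=\{m\in\sphere^2\,:\,\langle m,k_x\rangle>1/\epsilon\}$ together with $1/\epsilon\to 0$ --- and your argument is exactly that observation made rigorous (nested increasing caps whose union is the open hemisphere, plus the explicit Hausdorff bound $d_H(\overline{D_\epsilon(x)},E)\le\arcsin(1/\epsilon)$), so it follows the paper's intended reasoning while supplying the precise notion of convergence the paper leaves implicit.
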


\begin{prop} \label{eccplane}
 As the eccentricity $\epsilon$ of $H_{\epsilon}(x)$ goes to infinity, the resultant set is a plane represented by the equation $\langle x,y-\frac{x}{2} \rangle=0$ where $y\in\R^3,$ or equivalently by the polar radius equation $r(m)=\frac{|x|}{2\langle m,k_x\rangle}$ where $m\in\{m\in \sphere^2|\langle m,k_x\rangle> 0\}.$
\end{prop}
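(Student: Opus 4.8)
The plan is to obtain the limiting surface directly from the polar radius formula, first by taking a pointwise limit of $\h_\epsilon(m)$ as $\epsilon\to\infty$ to recover the claimed polar equation, and then by converting that polar equation into the stated Cartesian form via the parametrization $y=m\,r(m)$. To begin, I would rewrite the polar radius by dividing numerator and denominator by $\epsilon^2$,
\[
\h_\epsilon(m)=\frac{|x|(1-\epsilon^2)}{2\epsilon(1-\epsilon\langle m,k_x\rangle)}
=\frac{|x|\left(\tfrac{1}{\epsilon^2}-1\right)}{2\left(\tfrac{1}{\epsilon}-\langle m,k_x\rangle\right)}.
\]
Fixing any $m$ with $\langle m,k_x\rangle>0$ (which, by Proposition \ref{eccdom}, lies in $D_\epsilon(x)$ for all sufficiently large $\epsilon$) and letting $\epsilon\to\infty$ gives
\[
\lim_{\epsilon\to\infty}\h_\epsilon(m)=\frac{|x|(0-1)}{2(0-\langle m,k_x\rangle)}=\frac{|x|}{2\langle m,k_x\rangle}=:r(m),
\]
which is exactly the asserted polar radius equation. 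I would emphasize that the open condition $\langle m,k_x\rangle>0$ (rather than $\ge 0$) is forced here because $r(m)\to\infty$ as $\langle m,k_x\rangle\to 0^+$, so the finite hyperboloidal cap opens up toward the boundary of the half-space direction set of Proposition \ref{eccdom}.

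Next, to identify this limit surface with the plane, I would substitute $y=m\,r(m)$ and compute $\langle x,y\rangle$. Using $k_x=x/|x|$, so that $\langle m,k_x\rangle=\langle x,m\rangle/|x|$, I get
\[
\langle x,y\rangle=r(m)\,\langle x,m\rangle=\frac{|x|}{2\langle m,k_x\rangle}\langle x,m\rangle=\frac{|x|}{2}\cdot\frac{\langle x,m\rangle}{\langle m,k_x\rangle}=\frac{|x|}{2}\cdot|x|=\frac{|x|^2}{2}.
\]
Hence every point $y$ of the limit surface satisfies $\langle x,y\rangle=\tfrac{|x|^2}{2}$, i.e.\ $\langle x,\,y-\tfrac{x}{2}\rangle=0$. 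Conversely, any $y$ on this plane with $m=y/|y|$ satisfying $\langle m,k_x\rangle>0$ also satisfies the polar equation, so the two descriptions determine the same set; this establishes their equivalence.

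The computations above are elementary, so the main point requiring care is the \emph{sense} in which the family converges, i.e.\ what is meant by the ``resultant set.'' I would make this precise by noting that $\h_\epsilon\to r$ not merely pointwise but uniformly on every compact subset of $\{m\in\sphere^2\mid\langle m,k_x\rangle\ge\delta\}$ for each $\delta>0$ (the convergence is locally uniform since the estimate is uniform in such $m$), so the hyperboloidal caps converge locally in the Hausdorff sense to the plane; combined with Proposition \ref{eccdom} controlling the domains $D_\epsilon(x)$, this justifies calling the plane the resultant set. I would close with the remark that this plane is precisely the common directrix of the family $H_\epsilon(x)$, in agreement with the standard focus–directrix degeneration of a conic as the eccentricity grows without bound.
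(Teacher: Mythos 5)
Your proposal is correct, and it follows the same route the paper implicitly takes: the paper states Proposition \ref{eccplane} without a separate proof, treating it as an immediate consequence of letting $\epsilon\to\infty$ in the polar radius formula $\h_\epsilon(m)=\frac{|x|(1-\epsilon^2)}{2\epsilon(1-\epsilon\langle m,k_x\rangle)}$, which is exactly the limit you compute. Your additional steps (the Cartesian--polar equivalence via $\langle x,y\rangle=\frac{|x|^2}{2}$ and the locally uniform convergence clarifying the meaning of ``resultant set'') simply make rigorous what the paper leaves implicit.
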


Observe that as the eccentricity $\epsilon$ goes to $1$, we obtain a ray originating at $x$ going in the direction described by the vector $k_x$. We call this a {\it degenerate hyperboloid.}

An important property of hyperboloids can be described by the following proposition. 

\begin{prop}
Let $c>0$ and $\epsilon>1$ such that $c\epsilon>1$. Then the hyperboloids $H_{c\epsilon}(x)$ and $H_{\epsilon}(x)$ have the same foci: $\origin$ and $x$.
\end{prop}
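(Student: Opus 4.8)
The plan is to invoke the bifocal characterization of a hyperbola: a point $P$ lies on a hyperbola branch with foci $F_1,F_2$ and real axis $2a$ exactly when $|P-F_1|-|P-F_2|=2a$ with $0<2a<|F_1-F_2|$. I would show that every point of $H_\epsilon(x)$ satisfies this identity with $F_1=\origin$, $F_2=x$, and $2a=|x|/\epsilon$, and that the same holds for $H_{c\epsilon}(x)$ with $2a=|x|/(c\epsilon)$. Since both identities reference the very same pair of points $\origin$ and $x$, the two hyperboloids share these foci.

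First I would parametrize a point of $H_\epsilon(x)$ as $P=m\,\h_\epsilon(m)$ with $m\in D_\epsilon(x)$, so that $|P-\origin|=\h_\epsilon(m)$ because $m$ is a unit vector and $\h_\epsilon(m)>0$. Writing $\rho=\h_\epsilon(m)$ and $t=\langle m,k_x\rangle$ (so that $\langle m,x\rangle=|x|\,t$ since $x=|x|k_x$), I would expand $|P-x|^2=\rho^2-2\rho|x|t+|x|^2$. Then I would verify the identity $|P-x|^2=(\rho-|x|/\epsilon)^2$; upon substituting $\rho=\frac{|x|(\epsilon^2-1)}{2\epsilon(\epsilon t-1)}$, this reduces to the linear relation $2\rho(t-1/\epsilon)=|x|(1-1/\epsilon^2)$, which is precisely the polar radius equation rearranged. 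This yields $|P-x|=\rho-|x|/\epsilon$, hence $|P-\origin|-|P-x|=|x|/\epsilon$, a constant.

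Next I would check the branch and nondegeneracy conditions. On $D_\epsilon(x)$ one has $t>1/\epsilon$, so $\rho>0$; evaluating at the vertex $t=1$ gives $\rho=|x|(\epsilon+1)/(2\epsilon)>|x|/\epsilon$ (using $\epsilon>1$), and since $\rho$ increases as $t\to 1/\epsilon^{+}$, we get $\rho>|x|/\epsilon$ throughout, so $|P-x|>0$ and $P$ lies on the sheet nearer to $x$. The constant $|x|/\epsilon$ satisfies $0<|x|/\epsilon<|x|=|x-\origin|$ precisely because $\epsilon>1$, confirming $H_\epsilon(x)$ is a genuine hyperbola branch with foci $\origin$ and $x$. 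Repeating the identical computation with $\epsilon$ replaced by $c\epsilon$ uses only $c\epsilon>1$ (guaranteeing eccentricity exceeding one and $0<|x|/(c\epsilon)<|x|$) together with the given $c>0$, and produces constant difference $|x|/(c\epsilon)$ with the same two reference points.

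I do not expect a serious obstacle, since the content is a direct verification. The one place to be careful is the sign bookkeeping: confirming that it is $|P-\origin|-|P-x|$ (rather than its negative) that equals the positive constant, so that we land on the correct sheet $H$ and not its mirror sheet, and confirming the strict inequality $2a<|x|$ that distinguishes a hyperbola from a degenerate or elliptic locus. Once these are settled, the conclusion that both $H_\epsilon(x)$ and $H_{c\epsilon}(x)$ are branches of confocal hyperbolae with common foci $\origin$ and $x$ is immediate.
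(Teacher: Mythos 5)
Your proposal is correct, but it does genuinely more work than the paper, which offers no proof at all: there the proposition is read off directly from the definition in Section 2, where $H_\epsilon(x)$ is \emph{defined} as the branch, containing the focus $x$, of the rotationally symmetric two-sheeted hyperboloid with foci $\origin$ and $x$ and eccentricity $\epsilon$. On that reading both $H_{\epsilon}(x)$ and $H_{c\epsilon}(x)$ have foci $\origin$ and $x$ by construction, and the only content of the hypotheses $c>0$, $\epsilon>1$, $c\epsilon>1$ is that $c\epsilon$ is an admissible eccentricity, so that $H_{c\epsilon}(x)$ is a genuine (nondegenerate) hyperboloid branch. Your route instead starts from the polar radius formula $\h_\epsilon(m)=\frac{|x|(\epsilon^2-1)}{2\epsilon(\epsilon\langle m,k_x\rangle-1)}$ and verifies the bifocal identity $|P-\origin|-|P-x|=|x|/\epsilon$ with $0<|x|/\epsilon<|x|$, including the sign check $\rho>|x|/\epsilon$ that places $P$ on the sheet nearer $x$; the algebra is right (your linear relation $2\rho(t-1/\epsilon)=|x|(1-1/\epsilon^2)$ is exactly the rearranged polar equation), and the constant $|x|/\epsilon$ correctly corresponds to eccentricity $\epsilon$ for focal separation $|x|$. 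What your approach buys is a consistency proof that the paper's polar-radius formula really does describe a confocal family of hyperboloid branches with foci $\origin$ and $x$ --- something the paper takes for granted --- at the cost of a computation that is unnecessary if one accepts the definitional reading. One small point of care if your argument is to stand fully on its own: the bifocal identity as you prove it gives containment of $H_\epsilon(x)$ in the locus $\{P:|P-\origin|-|P-x|=|x|/\epsilon\}$; to identify the foci of the surface itself you should note (as your ray-by-ray parametrization in fact shows) that this containment is an equality, so $H_\epsilon(x)$ \emph{is} that locus and hence a hyperboloid sheet with exactly those foci.
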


The aforementioned property is important because a reflector $H_\epsilon(x)$ will reflect the light emitted from $\origin$ so that the light appears to be emitted from $x$; thus, making $x$ a virtual source. Alternatively, a refractor $H_\epsilon(x)$ will refract the light emitted from $\origin$ so that the light is delivered to $x$. These properties are true no matter how large or small the eccentricity is; all that matters is the location of the foci. 

Let $A_x=\{m\in \sphere^2| \langle m,k_x \rangle \ge 0\}$ and $A_x^\delta =\{m\in \sphere^2| \langle m,k_x \rangle \ge \delta\}$ for $\delta \in \R.$ Observe that $A_x=A_x^0$, $A_x^1=\{k_x\}$, $A_x^\delta= \varnothing$ for $\delta> 1$, and $A_x^\delta= A_x^{-1}=\sphere^2$ for $\delta \le -1$. It is also clear that if $\delta_1\le \delta_2$, then  $A_x^{\delta_1}\subseteq A_x^{\delta_2}$ with a strict inclusion if $\delta_1< \delta_2$ and $\delta_1,\delta_2\in [-1,1]$. So while $\delta$ only has practical significance while taking values in $[-1,1]$, allowing it to take all values in $\R$ makes some of the upcoming proofs easier.

By Propositions \ref{eccdom} and \ref{eccplane}, we have the following statement.

\begin{prop}\label{geoprop}
Let $0<\delta<1$ and $B\subseteq A_x^\delta$. Then if $\epsilon>\frac1\delta$, $h_{x,\epsilon}[B] \subset H_\epsilon(x).$ In particular, $\frac1{\epsilon}<\delta$ implies that $A_x^\delta\subset D_\epsilon(x)$, $\frac1{\epsilon}>\delta$ implies that $D_\epsilon(x)\subset A_x^\delta$, and $\frac1{\epsilon}=\delta$ implies that $\textnormal{Int}(A_x^\delta)= D_\epsilon(x)$.
\end{prop}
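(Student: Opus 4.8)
The plan is to reduce everything to the two defining inequalities recorded just above the statement, namely $D_\epsilon(x)=\{m\in\sphere^2\mid \langle m,k_x\rangle>1/\epsilon\}$ and $A_x^\delta=\{m\in\sphere^2\mid \langle m,k_x\rangle\ge\delta\}$, and to handle the three ``in particular'' clauses first, since the opening assertion will then fall out of the first of them. Rather than invoking the limiting Propositions \ref{eccdom} and \ref{eccplane} directly, I would work straight from the formula for $D_\epsilon(x)$, because the only comparison needed is between the threshold $1/\epsilon$ and the number $\delta$. Throughout I abbreviate $t(m)=\langle m,k_x\rangle$, a continuous function on $\sphere^2$ attaining every value in $[-1,1]$.

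First I would dispatch the three inclusion facts. If $1/\epsilon<\delta$ and $m\in A_x^\delta$, then $t(m)\ge\delta>1/\epsilon$, so $m\in D_\epsilon(x)$; hence $A_x^\delta\subseteq D_\epsilon(x)$, and the inclusion is proper because any $m$ with $1/\epsilon<t(m)<\delta$ lies in $D_\epsilon(x)\setminus A_x^\delta$, and such $m$ exist since $t$ realizes every value of the nonempty interval $(1/\epsilon,\delta)\subset(0,1)$. Symmetrically, if $1/\epsilon>\delta$ and $m\in D_\epsilon(x)$, then $t(m)>1/\epsilon>\delta$, giving $D_\epsilon(x)\subseteq A_x^\delta$, again properly, as any point on the circle $t(m)=\delta$ lies in $A_x^\delta\setminus D_\epsilon(x)$. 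For the borderline case $1/\epsilon=\delta$, I would show $\textnormal{Int}(A_x^\delta)=\{m\mid t(m)>\delta\}$, with the interior taken in $\sphere^2$: this set is open by continuity of $t$ and is contained in $A_x^\delta$, while each point of the level circle $\{t(m)=\delta\}$ fails to be interior because every $\sphere^2$-neighborhood of it meets $\{t(m)<\delta\}$. Since $D_\epsilon(x)=\{t(m)>1/\epsilon\}=\{t(m)>\delta\}$ when $1/\epsilon=\delta$, the claimed equality $\textnormal{Int}(A_x^\delta)=D_\epsilon(x)$ follows.

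It then remains to recover the opening statement. The hypothesis $\epsilon>1/\delta$ is equivalent to $1/\epsilon<\delta$, so by the first inclusion $B\subseteq A_x^\delta\subseteq D_\epsilon(x)$. Because $h_{x,\epsilon}$ is defined on all of $D_\epsilon(x)$ (the denominator $1-\epsilon\langle m,k_x\rangle$ is nonzero there) and $H_\epsilon(x)=h_{x,\epsilon}[D_\epsilon(x)]$ by construction, monotonicity of images under a fixed map yields $h_{x,\epsilon}[B]\subseteq h_{x,\epsilon}[D_\epsilon(x)]=H_\epsilon(x)$, which is exactly the assertion.

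I do not expect any serious obstacle, as the argument is essentially bookkeeping on a single inner product. The one point demanding care is the interior computation in the borderline case: one must confirm that $\textnormal{Int}(A_x^\delta)$ is computed in the subspace topology of $\sphere^2$ — in $\R^3$ the set $A_x^\delta$ has empty interior — and that the hypothesis $\delta<1$ is what rules out the degenerate situation $A_x^1=\{k_x\}$, where the level set collapses to a pole and the stated formula would break down.
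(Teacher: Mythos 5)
Your proof is correct, but it takes a more self-contained route than the paper does. The paper offers no written proof at all: it simply asserts that the proposition follows from Propositions \ref{eccdom} and \ref{eccplane}, i.e.\ from the limiting behavior of $H_\epsilon(x)$ and $D_\epsilon(x)$ as $\epsilon\to\infty$. That attribution is loose, since those propositions describe asymptotics rather than the finite-$\epsilon$ inclusions actually claimed; the real content is the explicit characterization $D_\epsilon(x)=\{m\in\sphere^2\mid \langle m,k_x\rangle>\tfrac1\epsilon\}$ stated earlier in Section \ref{hyper-background}, and your proof works directly from it, reducing every clause to a comparison of the thresholds $\tfrac1\epsilon$ and $\delta$ for the function $t(m)=\langle m,k_x\rangle$. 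This buys completeness (the strictness of the inclusions and the interior computation in the borderline case $\tfrac1\epsilon=\delta$ are actually verified, including the correct remark that $\Int$ must be taken in the subspace topology of $\sphere^2$), whereas the paper's one-line appeal leaves those details implicit. Two small points worth tightening: first, since the paper's convention (stated for the chain $A_x^{\delta_1}\subseteq A_x^{\delta_2}$) is that $\subset$ can carry strictness, your final step should note that $h_{x,\epsilon}$ is injective --- it is a radial graph, so distinct directions map to distinct points --- which upgrades $h_{x,\epsilon}[B]\subseteq H_\epsilon(x)$ to a proper inclusion via $B\subseteq A_x^\delta\subsetneq D_\epsilon(x)$; second, your claim that $h_{x,\epsilon}$ is well defined on $D_\epsilon(x)$ deserves the accompanying sign check that for $\langle m,k_x\rangle>\tfrac1\epsilon$ both the numerator $|x|(1-\epsilon^2)$ and the denominator $2\epsilon(1-\epsilon\langle m,k_x\rangle)$ are negative, so $\h_{x,\epsilon}(m)>0$ as required for a polar radius. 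Neither point is a gap in the logic, only in the bookkeeping.
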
           

Also note the following definition. 

\begin{Def}\label{cone-def}
For an element $x\in\R^3$ and a set $A\subset\R^3$, let the set $C_{x,A}=\{at+x(1-t)|t\in[0,1],a\in A\}$ be the union of all line segments from $x$ to $A$ and $C_{x,A,\infty}=\{at+x(1-t)|t\in[0,\infty),a\in A\}$ be the union of all rays from $x$ that intersect $A$.
\end{Def}

\section{Convex Weak Solutions}

We now have the background to construct and proceed with our discussion of the weak solution. Keep in mind that this weak solution definition, apart from some minor differences in notation, is identical to the weak solution defined in \cite{KOT}. 

Let $c=\text{min}_{x,y\in T} \langle k_x,k_y\rangle$, $\ell=\text{min}_{x\in T} |x|$, and $L=\text{max}_{x\in T} |x|$. Assume we are given a set $T\subseteq \R^3$. We say that $T$ satisfies Hypothesis H1 if the following condition is met. 

\begin{h1*}
$T$ is a compact subset of $\R^3$ contained in a half space of $\R^3$, $\ell>0$, and $2\ell c>L.$
\end{h1*}

 Note that this is Hypothesis H1 from \cite{KOT}.
 
 We also define a constant,
\begin{equation}\label{e0}
    \epsilon_0 = \frac{\ell+\sqrt{\ell^2 -2L\ell c+L^2}}{2\ell c-L},
\end{equation}
that depends only on $T$.

We first assume we are given a target set $T$ that satisfies Hypothesis H1. Let $\tilde{H}_\epsilon(x)$ be the convex body bounded by ${H}_\epsilon(x).$ Consider the aperture $D_T^{\delta_\gamma}= \textnormal{Int}\left(\bigcap_{x\in T} A_x^{\delta_\gamma} \right)$ where $\delta_\gamma=\frac{1}{\epsilon_0+\gamma}$ for some $\gamma>0$. We then define a simply connected refractor over $D_T^{\delta_\gamma}$ as the boundary of the intersection of the convex bodies bounded by hyperboloids. Specifically,
\begin{equation}\label{refrdef3}
   R=\partial h \textnormal{ where }h=\bigcap_{x\in T}\tilde{H}_{\epsilon_x}(x)
\end{equation}
where each $\epsilon_x\ge\epsilon'\ge\epsilon_0+\gamma=\frac1{\delta_\gamma}$. Observe that 
\begin{equation}\label{equRconvex}
    R=\left\{m\sup_{x\in T}\h_{x,\epsilon_x}(m)\left|m\in \Int\left(\bigcap_{x\in T} A_x^{\frac1{\epsilon_x}}\right)\right.\right\}.
\end{equation}
Note that $D_T^{\delta_\gamma}\subseteq\Int\left(\bigcap_{x\in T} A_x^{\frac1{\epsilon_x}}\right)$, and $\sup_{x\in T}\h_{x,\epsilon_x}$ is twice differentiable almost everywhere by Alexandrov's theorem \cite{convex}; thus $R$ may be considered a refactor per Definition \ref{refractdef}. Let 
\begin{equation}\label{convexWeakSet}
    \mathcal{R}_{convex}^{\epsilon'}(T)
\end{equation}
be the set of all such refractors. Please note that by Lemma 1 in \cite{KOT}, the set $\mathcal{R}_{convex}^{\epsilon'}(T)$ is nonempty. 

Note the following definition.
\begin{Def}\label{supportdef}
A hyperboloid $H(x)$ is said to be \textbf{supporting} to a set $Q\subset \R^3$ at a point $z\in \partial Q$ if the convex body $\tilde{H}(x)$ bounded by $H(x)$ contains $Q$ and $z\in H(x)\cap \partial Q$.
\end{Def}  
For a subset $\omega \subseteq T$ and a refractor $R\in \mathcal{R}_{convex}^{\epsilon'}(T)$ put 
\begin{equation}
M(\omega)=\{z\in R| \textnormal{ there exists } x\in \omega \textnormal{ such that } H(x) \textnormal{ is supporting to } R \textnormal{ at }z\}.
\end{equation}
The intersection of $\overline{D_T^{\delta_\gamma}}$ with the image of the set $M(\omega)$ under radial projection on $\sphere^2$ we call the {\it visibility set of $\omega$} and denote it by $V_{convex} (\omega).$ By Lemma 4 of \cite{KOT}, this set $V_{convex} (\omega)$ is measurable for all Borel sets $\omega \subseteq T.$

For $m \in D_T^{\delta_\gamma}$ let $r(m)$ be the set of points of intersection between the refractor $R$ and the ray of direction $m$ originating at $\origin$. The possibly multivalued map $\alpha_{convex}:D_T^{\delta_\gamma} \to T,$
\begin{equation}
     \alpha_{convex}(m)=\{x\in T| \textnormal{ there exists } H(x) \textnormal{ supporting to } R\textnormal{ at } r(m)\}
\end{equation}
is called the {\it refractor map}.

Assume we are given a nonnegative $g\in L^1(\mathbb{S}^2)$. Let us define for measurable $X\subseteq \sphere^2$
\begin{equation}\label{s-meas-convex}
    \mu_{g}(X)=\int_X g(m) d\sigma(m)
\end{equation}
where $\sigma$ denotes the standard measure on $\mathbb{S}^2.$ Assume that $g\equiv 0$ outside of $D_T^{\delta_\gamma}$. 

In order to formulate and solve the refractor problem (in the framework of weak solutions to be defined below), we need to define a measure representing the energy generated by $g$ and redistributed by a refractor $R \in  \mathcal{R}_{convex}^{\epsilon'}(T ).$

Define for any refractor $R\in \mathcal{R}_{convex}^{\epsilon'}(T)$, 
\begin{equation}
    G_{convex}(\omega)=\mu_{g}(V_{convex}(\omega))
\end{equation}
which we will deem {\it the energy function}. It can be shown that $G$ is a finite measure on the Borel $\sigma$-algebra of $T$.

Let $F$ be a nonnegative, finite, Borel measure on Borel subsets of $T$. We say that a refractor $R\in \mathcal{R}_{convex}^{\epsilon'}(T)$ is a {\it convex weak solution to the refractor problem} if the refractor map $\alpha_{convex}$ determined by $R$ is such that $\alpha_{convex}(m)\subseteq T$ for all $m\in D_T^{\delta_\gamma}$, and 
\begin{equation}\label{weak3}
    F(\omega)=G_{convex}(\omega)\textnormal{ for any Borel set } \omega \subseteq T.
\end{equation}

\section{Uniqueness Theorems}

We start with some uniqueness results. Note that Theorem \ref{unique} can be considered as a direct corollary to Theorem \ref{unique-continuous}. We include both as separate statements and proofs; as discrete versions of the uniqueness theorems proved to be of special interest in related problems; see \cite{OK1} and \cite{OK2}. We proceed with the following lemma; which is shown in the proof of Lemma 2 in \cite{KOT}.

\begin{lemma}\label{closed-convex}
    Let $T$ be a target set that satisfies Hypothesis H1. Suppose we are given positive real numbers $\gamma$ and $\epsilon'$ such that $\epsilon'\ge \epsilon_0+\gamma$ where $\epsilon_0$ is defined by (\ref{e0}). Let $R\in \mathcal{R}_{convex}^{\epsilon'}(T)$. Then $V_{convex}(\omega)$ is closed for all closed $\omega\subseteq T$.
\end{lemma}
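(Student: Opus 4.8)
The plan is to prove closedness sequentially. I will take an arbitrary sequence $m_k\to m_0$ with $m_k\in V_{convex}(\omega)$ and show $m_0\in V_{convex}(\omega)$. Since $\overline{D_T^{\delta_\gamma}}$ is closed and each $m_k$ lies in it, we immediately get $m_0\in\overline{D_T^{\delta_\gamma}}$. Writing $\rho=\sup_{x\in T}\h_{x,\epsilon_x}$ for the radial function of $R$ (continuous, extended to $\overline{D_T^{\delta_\gamma}}$), it remains to produce a single $x_0\in\omega$ together with a hyperboloid with focus $x_0$ that is supporting to $R$ at the point $r(m_0)=m_0\rho(m_0)$; this exhibits $m_0$ as the radial projection of a point of $M(\omega)$ and, combined with $m_0\in\overline{D_T^{\delta_\gamma}}$, places $m_0$ in $V_{convex}(\omega)$.

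To extract limiting data, I use that $m_k\in V_{convex}(\omega)$ furnishes, for each $k$, a focus $x_k\in\omega$ and an eccentricity $\epsilon_k>1$ such that $H_{\epsilon_k}(x_k)$ is supporting to $R$ at $z_k:=r(m_k)=m_k\rho(m_k)$; by Definition \ref{supportdef} this means $\tilde H_{\epsilon_k}(x_k)\supseteq h$ and $z_k\in H_{\epsilon_k}(x_k)\cap R$. Because $\omega$ is a closed subset of the compact set $T$, it is compact, so after passing to a subsequence I may assume $x_k\to x_0\in\omega$. Continuity of $\rho$ then gives $z_k=m_k\rho(m_k)\to m_0\rho(m_0)=r(m_0)=:z_0\in\overline{R}$. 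Note also that, since a supporting hyperboloid with focus $x_k$ at $z_k\in R$ must be the building block of $R$ passing through $z_k$, we have $\epsilon_k\ge\epsilon'>1$, so the $\epsilon_k$ are bounded away from $1$.

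The heart of the proof, and the step I expect to be the main obstacle, is the uniform upper bound on the eccentricities $\epsilon_k$. Since $z_k\in H_{\epsilon_k}(x_k)$, the number $\epsilon_k$ is the eccentricity of the focus-$x_k$ hyperboloid through $z_k$; writing out the polar-radius relation $\rho(m_k)=\h_{x_k,\epsilon_k}(m_k)$ as a quadratic in $\epsilon_k$, its leading coefficient is $2\rho(m_k)\langle m_k,k_{x_k}\rangle-|x_k|$, and the root escapes to $+\infty$ precisely when this coefficient tends to $0$, i.e. when $z_k$ approaches the directrix plane $\langle x_k,y-x_k/2\rangle=0$ of Proposition \ref{eccplane}. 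This is exactly where Hypothesis H1 enters: on $\overline{D_T^{\delta_\gamma}}$ one has $\langle m,k_x\rangle\ge\delta_\gamma$ for every $x\in T$, while $\rho$ is bounded below by a positive constant (continuous and positive on a compact set) and $|x|\le L$, so the condition $2\ell c>L$ forces $2\rho(m)\langle m,k_x\rangle-|x|$ to be uniformly positive. This is the estimate carried out in the proof of Lemma 2 of \cite{KOT}, and it keeps every $z_k$ bounded away from the relevant directrix plane, so the $\epsilon_k$ are bounded above. Passing to a further subsequence I obtain $\epsilon_k\to\epsilon_0\in(1,\infty)$.

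Finally I pass the supporting condition to the limit. The polar radius $\h_{x,\epsilon}(m)$ depends continuously on $(x,\epsilon,m)$ away from the vanishing of its denominator, so $H_{\epsilon_k}(x_k)\to H_{\epsilon_0}(x_0)$ and $\tilde H_{\epsilon_k}(x_k)\to\tilde H_{\epsilon_0}(x_0)$; the containment $\tilde H_{\epsilon_k}(x_k)\supseteq h$ is a closed condition and therefore persists as $\tilde H_{\epsilon_0}(x_0)\supseteq h$, while $z_k\in H_{\epsilon_k}(x_k)$ passes to $z_0\in H_{\epsilon_0}(x_0)$. Since $z_0\in\overline R$, the hyperboloid $H_{\epsilon_0}(x_0)$ is supporting to $R$ at $z_0=r(m_0)$ with focus $x_0\in\omega$, whence $x_0\in\alpha_{convex}(m_0)\cap\omega$ and $m_0\in V_{convex}(\omega)$. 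This proves that $V_{convex}(\omega)$ is closed. The only delicate point is the uniform eccentricity bound of the third paragraph; once $x_k,z_k,\epsilon_k$ are known to converge, the remaining limiting arguments (including the boundary case $m_0\in\partial D_T^{\delta_\gamma}$, handled by working with $\overline R$) are routine.
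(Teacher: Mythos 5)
Your route is genuinely different from the paper's. The paper never takes limits of hyperboloids: it rewrites $V_{convex}(\omega)$ as the coincidence set of two continuous radial functions --- the refractor's radial function $\sup_{x\in T}\h_{x,\epsilon_x}(m)$ and the function $\max_{x\in \omega'}\h_{x,\epsilon_x'}(m)$ built from the supporting hyperboloids with foci in $\omega$ --- and closedness follows because a coincidence set of continuous functions is closed. Your scheme (extract foci $x_k\to x_0$ by compactness of $\omega$, bound the eccentricities, pass the supporting condition to the limit) is instead the argument the paper uses later in the proof of Theorem \ref{continuous}, so it is a reasonable strategy; but as written it has two real gaps.

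First, the step you single out as the heart is justified by a non sequitur. From ``$\rho$ bounded below, $\langle m,k_x\rangle\ge\delta_\gamma$, $|x|\le L$, and $2\ell c>L$'' one gets at best $2\rho(m)\langle m,k_x\rangle-|x|\ge \ell\delta_\gamma-L$, which is \emph{negative} (since $\delta_\gamma<1\le L/\ell$); the hypothesis $2\ell c>L$ involves $c=\min_{x,y\in T}\langle k_x,k_y\rangle$, not $\delta_\gamma$, and cannot be inserted where you use it. The positivity that is actually available comes from comparing $\rho$ with the defining hyperboloid of the \emph{same} focus: $\rho(m)\ge\h_{x,\epsilon_x}(m)$ yields $2\rho(m)\langle m,k_x\rangle-|x|\ \ge\ |x|\,\frac{\epsilon_x-\langle m,k_x\rangle}{\epsilon_x\left(\epsilon_x\langle m,k_x\rangle-1\right)}>0$ wherever $\langle m,k_x\rangle>1/\epsilon_x$. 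This is only pointwise, not uniform (it degenerates as $\epsilon_x\to\infty$), but for an interior limit point $m_0\in D_T^{\delta_\gamma}$ pointwise positivity at the single limit pair $(m_0,x_0)$, plus continuity of $\rho$ at $m_0$, already bounds $\epsilon_k$ along the sequence --- so at interior points your argument can be repaired without any uniform estimate. A smaller error in the same paragraph: the claim that a supporting hyperboloid ``must be the building block of $R$'', hence $\epsilon_k\ge\epsilon'$, is backwards; since $\h_{x,\epsilon}(m)$ is strictly decreasing in $\epsilon$, the correct same-focus comparison gives $\epsilon_k\le\epsilon_{x_k}$, and a bound away from $1$ must instead come from the fact that a body bounded by a hyperboloid with eccentricity near $1$ is too thin to contain the three-dimensional body $h$.

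Second --- and this is the step you dismiss as routine --- the boundary case $m_0\in\partial D_T^{\delta_\gamma}$ is where the argument genuinely breaks, and under the stated hypotheses (which allow $\epsilon'=\epsilon_0+\gamma$ and $\epsilon_x=\epsilon'$) it cannot be fixed. If $\epsilon_{x_0}=\epsilon_0+\gamma$ and $\langle m_0,k_{x_0}\rangle=\delta_\gamma$, then $\rho$ blows up as $m\to m_0$, so ``continuity of $\rho$ gives $z_k\to r(m_0)$'' fails: the touching points $z_k$ escape to infinity while their eccentricities stay \emph{bounded} (the root of your quadratic tends to $1/\langle m_0,k_{x_0}\rangle$), so your eccentricity bound never detects this failure mode, and $r(m_0)$ simply does not exist. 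Concretely, take $T=\{x\}$, $\omega=T$, and $R=H_{\epsilon_0+\gamma}(x)$: every point of $R$ is supported by $H_{\epsilon_0+\gamma}(x)$ itself, the radial projection of $M(\omega)=R$ is the open disk $\{m\,:\,\langle m,k_x\rangle>\delta_\gamma\}=D_T^{\delta_\gamma}$, and hence $V_{convex}(\omega)=D_T^{\delta_\gamma}$ is open, not closed. So no manipulation with $\overline{R}$ can make the boundary case ``routine''; a correct proof either needs $\epsilon'>\epsilon_0+\gamma$ strictly (which makes $\rho$ bounded and continuous up to $\overline{D_T^{\delta_\gamma}}$, after which both your argument and the paper's go through), or must settle for closedness relative to $D_T^{\delta_\gamma}$, which is all the uniqueness theorems actually use, since $\mu_g$ gives no mass to $\partial D_T^{\delta_\gamma}$.
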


\iffalse
\begin{proof}
By (\ref{equRconvex}), $R=\left\{m\sup_{x\in T}\h_{x,\epsilon_x}(m)\left|m\in \Int\left(\bigcap_{x\in T} A_x^{\frac1{\epsilon_x}}\right)\right.\right\}$ where $\epsilon_x\ge\epsilon'$ for all $x\in T$. Assume we are given a nonempty set $\omega\subseteq T.$ 

By Lemma 2 in \cite{KOT}, $T\subset \bigcap_{x\in T} \tilde{H}_{\epsilon_x}(x)$. Therefore, by Definition \ref{supportdef}, that $m\in V_{convex}(\omega)$ if and only if there exists an $x\in \omega$ and a corresponding $\epsilon_x'\ge\epsilon'$ such that $\h_{x,\epsilon_x'}(m) =\sup_{x\in T} \h_{x,\epsilon_x}(m).$ Let $\omega'$ be set of all $x\in \omega$ such that $V_{convex}(\{x\})$ is nonempty; observe that $V_{convex}(\omega')=V_{convex}(\omega)$. Thus the function $\max_{x\in \omega'}\h_{x,\epsilon_x'}(m)$ defined and is continuous on all $m\in D_{T}^\delta$ and $\sup_{x\in T}\h_{x,\epsilon_x}(m)=\max_{x\in \omega'}\h_{x,\epsilon_x'}(m)$ for all $m\in V_{convex}(\omega).$ Also, note that $y\in V_{convex}(\omega)$ if and only if $\sup_{x\in T}\h_{x,\epsilon_x}(y)=\max_{x\in \omega'}\h_{x,\epsilon_x'}(y)$.
    
    Let $\{m_i\}$ be a sequence of elements in $V_{convex}(\omega)$ that converge to $m'.$ Thus, by continuity, $m_i\sup_{x\in T}\h_{x,\epsilon_x}(m_i)\to m'\sup_{x\in T}\h_{x,\epsilon_x}(m')$ and $m_i\max_{x\in \omega'}\h_{x,\epsilon_x'}(m_i)\to m'\max_{x\in \omega'}\h_{x,\epsilon_x'}(m')$. Therefore, $m'\max_{x\in \omega'}\h_{x,\epsilon_x'}(m')=m'\sup_{x\in T}\h_{x,\epsilon_x}(m').$  Thus $m'\in V_{convex}(\omega)$.
\end{proof}
\fi

Before we proceed, note that if we write $V_{convex}(R;\omega)$ for some Borel set $\omega\subseteq T$ and some refractor $R\in \mathcal{R}_{convex}^{\epsilon'}(T)$, this is specifically the visibility set for the refractor $R$ evaluated on the set $\omega$. Similarly, if we write $G_{convex}(R;\omega)$ for some Borel set $\omega\subseteq T$ and some refractor $R\in \mathcal{R}_{convex}^{\epsilon'}(T)$, this is specifically the energy function for the refractor $R$ evaluated on the set $\omega$. We will be using this when we are talking about multiple refractors and we need to specify the energy function for each refractor.

Here we consider the case of the refractor problem (\ref{weak3}) where the set $T$ is finite. We prescribe the measure $F$ in (\ref{weak3}) as a Dirac measure concentrated at points in $T.$ We now introduce notation for refractors in the discrete case. Let $T=\{x_1,x_2,\dots,x_k\}$. We set $H_i = H(x_i)$ and the eccentricity of $H_i$ we denote by $\epsilon_i$. For the hyperboloids $H_1,\dots,H_k$ define the refractor
\begin{equation}
    R= \partial\left(\bigcap_{i=1}^k \tilde{H}_i\right)\in \mathcal{R}_{convex}^{\epsilon'}(T).
\end{equation}

Since each hyperboloid $H_i$ is uniquely defined by its eccentricity $\epsilon_i$, the refractor $R$ can be identified with the point with coordinates $(\epsilon_1,\epsilon_2,\dots,\epsilon_k)$ in the region
\begin{equation}
    \epsilon_1\ge\epsilon',\epsilon_2\ge\epsilon',\dots,\epsilon_k\ge\epsilon'
\end{equation}
in $k-$dimensional euclidean space. Thus we can write a refractor $R\in \mathcal{R}_{convex}^{\epsilon'}(T)$ as $(\epsilon_1, \epsilon_2,\dots,\epsilon_k).$ We start with a uniqueness theorem for the discrete case.

\begin{theorem}\label{unique}
    Let $T = \{x_1,\dots, x_k\}$ be a collection of $k$ distinct points that satisfy Hypothesis H1. Suppose we are given positive real numbers $\gamma$ and $\epsilon'$ such that $\epsilon'\ge \epsilon_0+\gamma$ where $\epsilon_0$ is defined by (\ref{e0}).  Assume we are given a nonnegative $g\in L^1(\mathbb{S}^2)$ such that $g>0$ inside $D_T^{\delta_\gamma}$ and $g\equiv 0$ outside $D_T^{\delta_\gamma}$ where $\delta_\gamma=\frac1{\epsilon_0+\gamma}$. Let $f_1,\dots,f_k$ be a collection of positive real numbers such that 
    \begin{equation}
        \sum_{i=1}^k f_i = \mu_{g}(D_T^{\delta_\gamma}).
    \end{equation}
    Let $\overline{R}=(\overline{\epsilon_1},\dots, \overline{\epsilon_k})$ and $\tilde{R}=(\tilde{\epsilon_1},\dots, \tilde{\epsilon_k})$ be refractors in $\mathcal{R}_{convex}^{\epsilon'}(T)$ such that $G_{convex}(\tilde{R};x_i)=G_{convex}(\overline{R};x_i)=f_i$ for all $i\in[k].$ 
    
    Then the inequality $\tilde{\epsilon_j}\ge \overline{\epsilon_j}$ for some $j$ implies that $\tilde{\epsilon_i}\ge \overline{\epsilon_i}$ for all $i\in[k].$ Furthermore, the equality $\tilde{\epsilon_j}= \overline{\epsilon_j}$ for some $j$ implies that $\tilde{\epsilon_i}= \overline{\epsilon_i}$ for all $i\in[k].$
\end{theorem}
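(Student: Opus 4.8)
The plan is to prove the first (monotonicity) statement by contradiction via a strict energy comparison on the common aperture $D=D_T^{\delta_\gamma}$, and to deduce the second (equality) statement from it by symmetry. The one geometric fact that drives everything is that for fixed $m$ the polar radius $\h_{x,\epsilon}(m)$ is strictly decreasing in $\epsilon$: writing $t=\langle m,k_x\rangle$, a direct computation gives $\partial_\epsilon\h_{x,\epsilon}(m)=\frac{|x|}{2}\cdot\frac{-(\epsilon^2-2\epsilon t+1)}{(\epsilon-\epsilon^2 t)^2}$, and $\epsilon^2-2\epsilon t+1=(\epsilon-t)^2+(1-t^2)>0$ for $t<1$. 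Since every eccentricity satisfies $\epsilon_i\ge\epsilon'\ge 1/\delta_\gamma$, all the $\h_{x_i,\epsilon_i}$ are defined on the whole of $D$, so by (\ref{equRconvex}) the two refractors are described by their radial functions $\overline\rho=\max_i\h_{x_i,\overline\epsilon_i}$ and $\tilde\rho=\max_i\h_{x_i,\tilde\epsilon_i}$, with $m\in V_i$ precisely when index $i$ attains the relevant maximum at $m$.

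Suppose, toward a contradiction to the first statement, that $S:=\{i:\tilde\epsilon_i<\overline\epsilon_i\}$ and its complement $S^c$ are both nonempty; since the hypothesis gives some $j\in S^c$, it suffices to rule out $S$ being a proper nonempty subset. Put $W=\bigcup_{i\in S}V_i(\overline R)$ and $W'=\bigcup_{i\in S}V_i(\tilde R)$. Monotonicity gives $W\subseteq W'$: if some $i\in S$ attains $\overline\rho(m)$, then $\h_{x_i,\tilde\epsilon_i}(m)>\h_{x_i,\overline\epsilon_i}(m)=\overline\rho(m)\ge\h_{x_p,\overline\epsilon_p}(m)$ for all $p$, so $\tilde\rho(m)>\overline\rho(m)$, and whichever $p$ attains $\tilde\rho(m)$ must have $\h_{x_p,\tilde\epsilon_p}(m)>\h_{x_p,\overline\epsilon_p}(m)$, i.e. $p\in S$. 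Because $G_{convex}$ is a measure and $G_{convex}(\overline R;x_i)=G_{convex}(\tilde R;x_i)=f_i$, additivity over singletons yields $\mu_g(W)=\sum_{i\in S}f_i=\mu_g(W')$, and with $W\subseteq W'$ this forces $\mu_g(W'\setminus W)=0$.

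The heart of the proof is to contradict this by showing $\mu_g(W'\setminus W)>0$. On $D$ let $A$ (resp. $B$) be the set where some $S$-index (resp. some $S^c$-index) strictly attains $\overline\rho$; both are open, and both have positive Lebesgue measure since each $f_i>0$ forces $\mu_g(V_i(\overline R))>0$ and hence $\sigma(V_i\cap D)>0$ as $g>0$ on $D$. The tie-set $D\setminus(A\cup B)$ is nowhere dense and $D$ is connected, so the closures of $A$ and $B$ meet at a point $m^*\in D$. Since $W$ is closed (Lemma \ref{closed-convex}) and $m^*\in\overline A\subseteq W$, some $a\in S$ attains $\overline\rho(m^*)$, and strict monotonicity gives $\h_{x_a,\tilde\epsilon_a}(m^*)>\overline\rho(m^*)$; hence on a ball $B_0\subseteq D$ about $m^*$ we have $\h_{x_a,\tilde\epsilon_a}>\overline\rho\ge\h_{x_p,\overline\epsilon_p}$ for every $p$, so every $\tilde R$-winner on $B_0$ lies in $S$ and $B_0\subseteq W'$. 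Since $m^*\in\overline B$, the open set $B_0\cap B$ is nonempty, lies in $W'\setminus W$, and carries positive $\mu_g$-mass; this contradicts $\mu_g(W'\setminus W)=0$, so $S=\varnothing$.

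The equality statement is then immediate: if $\tilde\epsilon_j=\overline\epsilon_j$ then $\tilde\epsilon_j\ge\overline\epsilon_j$ gives $\tilde\epsilon_i\ge\overline\epsilon_i$ for all $i$ by the first statement, while the first statement applied with $\overline R$ and $\tilde R$ interchanged (using $\overline\epsilon_j\ge\tilde\epsilon_j$) gives $\overline\epsilon_i\ge\tilde\epsilon_i$ for all $i$, so $\tilde\epsilon_i=\overline\epsilon_i$ for every $i$. I expect the positivity $\mu_g(W'\setminus W)>0$ to be the main obstacle: converting the qualitative separation of $S$ and $S^c$ into a genuinely positive-measure switching region requires the connectedness/closure argument at an interface point $m^*$ together with strict monotonicity, and one must check that $m^*$ lies in the open aperture where $g>0$ and where all the hyperboloids are defined — which is exactly what $\epsilon_i\ge 1/\delta_\gamma$ and Hypothesis H1 guarantee.
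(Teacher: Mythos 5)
Your proof is correct, and it shares the paper's overall skeleton: split the index set according to the sign of $\tilde{\epsilon_i}-\overline{\epsilon_i}$, use strict monotonicity of $\h_{x,\epsilon}(m)$ in $\epsilon$ to compare the two refractors' visibility sets, and contradict the equality of the energies $\sum_{i\in S}f_i$. The genuine difference is in how the strict energy inequality is produced. The paper takes $J=\{i:\tilde{\epsilon_i}>\overline{\epsilon_i}\}$, upgrades the inclusion to $V_{convex}(\tilde{R};J)\subseteq \Int(V_{convex}(\overline{R};J))$, and then asserts that $\Int(V_{convex}(\overline{R};J))\setminus V_{convex}(\tilde{R};J)$ is open and nonempty, citing only Lemma \ref{closed-convex} and nonemptiness of $V_{convex}(\overline{R};J)$; that nonemptiness is in fact the delicate point (a priori the closed set could exhaust the relative interior), and ruling it out silently needs connectedness of the aperture and properness of $J$. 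You instead prove only the plain inclusion $W\subseteq W'$ and exhibit a positive-measure piece of $W'\setminus W$ directly: connectedness of $D_T^{\delta_\gamma}$ plus nowhere-density of the tie set gives an interface point $m^*\in\overline{A}\cap\overline{B}$, strict monotonicity at $m^*$ forces a ball $B_0\subseteq W'$, and $B_0\cap B$ is open, nonempty, and disjoint from $W$. This buys rigor exactly where the paper is terse, and you also correctly isolate that only a \emph{proper} nonempty $S$ must be excluded — a necessary restriction, since when all inequalities are strict in the same direction no contradiction is available (for $k=1$ every eccentricity solves the problem), and the interior-versus-closed argument would indeed degenerate for $J=[k]$. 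One small debt in your write-up: the positivity of $\mu_g(A)$ and $\mu_g(B)$ and the nowhere-density of the tie set require that the overlap of visibility sets of disjoint groups of indices is $\mu_g$-null; this does not follow from $f_i>0$ alone, but from the paper's assertion that $G_{convex}$ is a measure (which gives $\mu_g(V_{convex}(\overline{R};S)\cap V_{convex}(\overline{R};S^c))=0$) combined with $g>0$ on $D_T^{\delta_\gamma}$ — you should invoke that fact explicitly.
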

\begin{proof}
Let $J$ be a nonempty subset of $[k]$ such that for any $i \in J,$ $\tilde{\epsilon_i} > \overline{\epsilon_i}$, and for any $i\in [k]\setminus J$, $\tilde{\epsilon_i} \le \overline{\epsilon_i}$. Note that $m\in V_{convex}(\tilde{R};\{x_i|i \in J \})$ if and only if there exists some $i\in J$ such that $\h_{x_i,\tilde{\epsilon_i}}(m)\ge \h_{x_\ell,\tilde{\epsilon_\ell}}(m)$ for all $\ell\in [k]\setminus J.$ For this $m$: since $\h_{x_i,\overline{\epsilon_i}}(m)>\h_{x_i,\tilde{\epsilon_i}}(m)$ for all $i\in J$ and $\h_{x_\ell,\overline{\epsilon_\ell}}(m)\le \h_{x_\ell,\tilde{\epsilon_\ell}}(m)$ for all $\ell\in [k]\setminus J$, then there exists some $i\in J$ such that $\h_{x_i,\overline{\epsilon_i}}(m)> \h_{x_\ell,\overline{\epsilon_\ell}}(m)$ for all $\ell\in [k]\setminus J$. Thus, any $m \in V_{convex}(\tilde{R};\{x_i|i \in J \})$ is an interior point of $V_{convex}(\overline{R};\{x_i|i \in J \})$; in other words, $V_{convex}(\tilde{R};\{x_i|i \in J \})\subseteq \Int(V_{convex}(\overline{R};\{x_i|i \in J \}))$. Recall that, by Lemma \ref{closed-convex}, $V_{convex}(\tilde{R};\{x_i|i \in J \})$ is closed and, since $f_1,\dots,f_k$ are positive, $V_{convex}(\overline{R};\{x_i|i \in J \})$ is nonempty. Then $\Int(V_{convex}(\overline{R};\{x_i|i \in J \}))\setminus V_{convex}(\tilde{R};\{x_i|i \in J \})$ is open and nonempty. So $\mu_g(V_{convex}(\overline{R};\{x_i|i \in J \})\setminus V_{convex}(\tilde{R};\{x_i|i \in J \}))>0$. Therefore we must have
    \begin{equation}
        \sum_{i\in J} f_i=G_{convex}(\tilde{R};\{x_i|i \in J \})<G_{convex}(\overline{R};\{x_i|i \in J \})=\sum_{i\in J} f_i
    \end{equation}
which is a contradiction because $G_{convex}(\tilde{R};x_i)=G_{convex}(\overline{R};x_i)=f_i$ for all $i\in[k].$ The theorem is proved.
\end{proof}

 Observe that for all refractors $R\in \mathcal{R}_{convex}^{\epsilon'}(T)$, there exists a function $K:T\to [\epsilon',\infty)$ such that $R=\partial(\bigcap_{x\in T} \tilde{H}_{K(x)}(x))$. Since each hyperboloid $\tilde{H}_{K(x)}(x)$ is uniquely determined by $K$, the refactor $R$ can be identified with the function $K:T\to [\epsilon',\infty)$. Thus we can write a refractor $R\in \mathcal{R}_{convex}^{\epsilon'}(T)$ as $[K]$ where $K:T\to[\epsilon', \infty)$; note that $[K]=\left\{m\sup_{x\in T}\h_{x,K(x)}(m)\left|m\in \Int\left(\bigcap_{x\in T} A_x^{\frac1{K(x)}}\right)\right.\right\}$. Given a refractor $R\in \mathcal{R}_{convex}^{\epsilon'}(T)$, we call $K:T\to[\epsilon', \infty)$ {\it the maximal function of $R$} if $R=\left\{m\max_{x\in T}\h_{x,K(x)}(m)\left|m\in \Int\left(\bigcap_{x\in T} A_x^{\frac1{K(x)}}\right)\right.\right\}$. We proceed with the following lemma. 

 \begin{lemma}\label{max-func-lemma}
     Let $R\in\mathcal{R}_{convex}^{\epsilon'}(T)$ be a refractor such that for all $x\in T$, $V_{convex}(\{x\})$ is nonempty. Then there exists a $K:T\to[\epsilon', \infty)$ that is a the maximal function of $R$.
 \end{lemma}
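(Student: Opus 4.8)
The plan is to build $K$ intrinsically from $R$, sending each focus $x$ to the eccentricity of the hyperboloid with focus $x$ that supports $R$, and then to turn the supremum in (\ref{equRconvex}) into an attained maximum by a compactness argument on $T$.

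First I would record the monotonicity that underlies everything. Differentiating the polar radius in $\epsilon$ gives $\partial_\epsilon\h_{x,\epsilon}(m)=-\frac{|x|}{2}\,\frac{(\epsilon-\langle m,k_x\rangle)^2+1-\langle m,k_x\rangle^2}{\epsilon^2(1-\epsilon\langle m,k_x\rangle)^2}<0$ on the admissible range $\langle m,k_x\rangle>\frac1\epsilon$, so for each fixed $x$ the scalar $\h_{x,\epsilon}(m)$ is strictly decreasing in $\epsilon$, and hence the bodies $\tilde H_\epsilon(x)$ are nested increasingly in $\epsilon$. Writing $r_R(m)=\sup_{x\in T}\h_{x,\epsilon_x}(m)$ for the polar radius of $R=\partial h$, containment $\tilde H_\epsilon(x)\supseteq h$ is equivalent to $\h_{x,\epsilon}(\cdot)\le r_R(\cdot)$, so by strict monotonicity the set of admissible $\epsilon$ with $\tilde H_\epsilon(x)\supseteq h$ is a closed half-line $[K(x),\infty)$, whose endpoint $K(x)$ is the \emph{unique} eccentricity for which $H_{K(x)}(x)$ touches $R$, i.e.\ supports it in the sense of Definition \ref{supportdef}. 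Since $\tilde H_{\epsilon_x}(x)\supseteq h$ we get $K(x)\le\epsilon_x$. Crucially, the hypothesis $V_{convex}(\{x\})\neq\varnothing$ says that a supporting hyperboloid with focus $x$ and \emph{admissible} eccentricity ($\ge\epsilon'$) actually exists; by uniqueness of the touching eccentricity this forces $K(x)\ge\epsilon'$, so $K:T\to[\epsilon',\infty)$ is well defined.

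Next I would verify that $[K]$ is the very same refractor. From $K(x)\le\epsilon_x$ and the nesting, $\tilde H_{K(x)}(x)\subseteq\tilde H_{\epsilon_x}(x)$, whence $\bigcap_{x}\tilde H_{K(x)}(x)\subseteq h$; and $\tilde H_{K(x)}(x)\supseteq h$ for every $x$ gives the reverse inclusion, so $\bigcap_{x}\tilde H_{K(x)}(x)=h$. As each $K(x)\ge\epsilon'\ge\epsilon_0+\gamma$, formula (\ref{equRconvex}) applied to this common body yields $R=\{m\sup_{x\in T}\h_{x,K(x)}(m)\mid m\in\Int(\bigcap_{x}A_x^{1/K(x)})\}$ over exactly the domain required by the definition of the maximal function. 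It then only remains to promote this supremum to a maximum.

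This last point is the heart of the matter, and the main obstacle. Fix $m$ in the domain and pick a maximizing sequence $x_n\in T$ with $\h_{x_n,K(x_n)}(m)\to r_R(m)$; by compactness of $T$ assume $x_n\to x_\ast$, and pass to a further subsequence so that $K(x_n)\to\epsilon_\ast\in[\epsilon',\infty]$. The delicate step is to exclude the degenerate escape $\epsilon_\ast=\infty$: were it so, Proposition \ref{eccplane} would give $r_R(m)=\tfrac{|x_\ast|}{2\langle m,k_{x_\ast}\rangle}$ (the directrix value), yet $V_{convex}(\{x_\ast\})\neq\varnothing$ furnishes a \emph{finite} $K(x_\ast)$ with $\h_{x_\ast,K(x_\ast)}(m)\le r_R(m)$, while strict monotonicity forces $\h_{x_\ast,K(x_\ast)}(m)>\tfrac{|x_\ast|}{2\langle m,k_{x_\ast}\rangle}=r_R(m)$ --- a contradiction. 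Hence $\epsilon_\ast<\infty$. Finiteness of $r_R(m)$ puts $m$ strictly inside $D_{\epsilon_\ast}(x_\ast)$, so by joint continuity of $(x,\epsilon,m)\mapsto\h_{x,\epsilon}(m)$ we get $\h_{x_\ast,\epsilon_\ast}(m)=r_R(m)$, and passing to the limit in the inclusions $\tilde H_{K(x_n)}(x_n)\supseteq h$ gives $\tilde H_{\epsilon_\ast}(x_\ast)\supseteq h$. Thus $H_{\epsilon_\ast}(x_\ast)$ supports $R$, so $\epsilon_\ast=K(x_\ast)$ by uniqueness, and the supremum at $m$ is attained by $x_\ast$. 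This proves $R=\{m\max_{x\in T}\h_{x,K(x)}(m)\mid\dots\}$, i.e.\ $K$ is a maximal function of $R$. As indicated, the principal difficulty is exactly this compactness argument and, within it, ruling out the runaway of eccentricities to infinity --- which is precisely where the hypothesis $V_{convex}(\{x\})\neq\varnothing$ for \emph{every} $x$ (not only those defining $R$) is indispensable.
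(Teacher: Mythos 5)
Your overall strategy is the same as the paper's --- assign to each focus $x$ the eccentricity of the (unique) hyperboloid with focus $x$ supporting $R$, then argue that the supremum in (\ref{equRconvex}) becomes an attained maximum --- and your compactness step is in fact \emph{more} rigorous than the paper's own write-up, which simply asserts the final ``$\max$'' formula and never addresses attainment at all. The monotonicity computation, the identification of $K(x)$ with the minimal containing eccentricity, the uniqueness of the supporting eccentricity, the inclusion $\bigcap_{x}\tilde{H}_{K(x)}(x)=h$, and the exclusion of $\epsilon_\ast=\infty$ by strict monotonicity are all correct. The genuine gap is the step ``$K(x)\ge\epsilon'$''. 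You justify it by reading the hypothesis $V_{convex}(\{x\})\neq\varnothing$ as furnishing a supporting hyperboloid of \emph{admissible} eccentricity $\ge\epsilon'$; but the paper's definitions say no such thing: $M(\omega)$ is defined using supporting hyperboloids $H(x)$ of arbitrary eccentricity, and since (as you yourself prove) the supporting eccentricity for a given focus is unique, you cannot recover the bound by choosing a better witness. Worse, the claim is false in general, not merely unjustified. Take $T$ to consist of one point $x$ together with a large finite ring of points $y_j$, all of norm $1$, each $k_{y_j}$ at angle $20^{\circ}$ from $k_x$ (so $\epsilon_0\approx 3.17$); put $\epsilon_{y_j}=\epsilon'\approx 3.2$ and $\epsilon_x$ very large, so that $\tilde{H}_{\epsilon_x}(x)$ is redundant and $h=\bigcap_j\tilde{H}_{\epsilon'}(y_j)$. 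Every ring hyperboloid passes through the apex of $h$ on the axis (at radius $\approx 0.72$), so $V_{convex}(\{y_j\})\neq\varnothing$ for all $j$ and also $V_{convex}(\{x\})\neq\varnothing$; but the unique hyperboloid with focus $x$ supporting $h$ is the one through that apex, whose eccentricity is $\approx 2.28<\epsilon'$ (it contains $h$ because the apex is a conical point of $h$ while the hyperboloid is smooth there). So the function $K$ you construct does not map into $[\epsilon',\infty)$, even though the lemma's conclusion is true in this example (here $K\equiv\epsilon'$ works).

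The repair uses exactly the machinery you already built, which is why I would call this a missing idea rather than a wrong approach. Run your maximizing-sequence argument with the \emph{original} eccentricities $\epsilon_x\ (\ge\epsilon')$ instead of with $K$: the exclusion of $\epsilon_\ast=\infty$ goes through verbatim (it only needs finiteness of $\epsilon_{x_\ast}$, not the visibility hypothesis), and now the limiting eccentricity, being a limit of numbers $\ge\epsilon'$, satisfies $\epsilon_\ast\ge\epsilon'$. Hence every point of $R$ over the domain admits a supporting hyperboloid with focus in $T$ and eccentricity $\ge\epsilon'$; equivalently, foci with supporting eccentricity below $\epsilon'$ are never needed to attain the maximum. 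Redefining $K(x):=\epsilon_x$ at exactly those foci (containment is preserved, since any eccentricity above the minimal containing one still contains $h$) yields a maximal function with values in $[\epsilon',\infty)$. For fairness, note that the paper's own proof shares the defect: its claimed equivalence ``$m\in V_{convex}(\{x\})$ if and only if there exists $\epsilon_x'\ge\epsilon'$ with $\h_{x,\epsilon_x'}(m)=\sup_{x\in T}\h_{x,\epsilon_x}(m)$'' fails at the apex direction for the focus $x$ in the example above, so your write-up both inherits the paper's weak point and supplies the rigor the paper omits.
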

 \begin{proof}
 By Lemma 2 in \cite{KOT}, $T\subset \bigcap_{x\in T} \tilde{H}_{\epsilon_x}(x)$. Therefore, by Definition \ref{supportdef}, that $m\in V_{convex}(\{x\})$ if and only if there exists a corresponding $\epsilon_x'\ge\epsilon'$ such that $\h_{x,\epsilon_x'}(m) =\sup_{x\in T} \h_{x,\epsilon_x}(m).$ Define $K(x)=\epsilon_x'$ for all $x\in T.$ Then $R=\left\{m\max_{x\in T}\h_{x,K(x)}(m)\left|m\in \Int\left(\bigcap_{x\in T} A_x^{\frac1{K(x)}}\right)\right.\right\}$.
 \end{proof}

 We now conclude with a uniqueness theorem for more general measures and target sets.

\begin{theorem}\label{unique-continuous}
    Let $T $ be a target set that satisfies Hypothesis H1. Let $F$ be a nonnegative, finite, Borel measure on Borel subsets of $T.$ Suppose we are given positive real numbers $\gamma$ and $\epsilon'$ such that $\epsilon'\ge \epsilon_0+\gamma$ where $\epsilon_0$ is defined by (\ref{e0}).  Assume we are given a nonnegative $g\in L^1(\mathbb{S}^2)$ such that $g>0$ inside $D_T^{\delta_\gamma}$ and $g\equiv 0$ outside $D_T^{\delta_\gamma}$ where $\delta_\gamma=\frac1{\epsilon_0+\gamma}$ such that 
    \begin{equation}
        F(T) = \mu_{g}(D_T^{\delta_\gamma}).
    \end{equation}
    Let $\overline{R}$ and $\tilde{R}$ be refractors in $\mathcal{R}_{convex}^{\epsilon'}(T)$ such that for all nonempty Borel $\omega \subseteq T$: $F(\omega)=G_{convex}(\tilde{R};\omega)=G_{convex}(\overline{R};\omega)$, $V_{convex}(\tilde{R};\omega)\neq \varnothing$, and $V_{convex}(\overline{R};\omega)\neq \varnothing$. 
    
    Then there exists functions $\overline{K}:T\to[\epsilon', \infty)$ and $\tilde{K}:T\to[\epsilon', \infty)$ that are, respectively, maximal functions of $\overline{R}$ and $\tilde{R}$ such that the inequality $\tilde{K}(x)\ge \overline{K}(x)$ for some $x\in T$ implies that $\tilde{K}(y)\ge \overline{K}(y)$ for all $y\in T.$ Furthermore, the equality $\tilde{K}(x)= \overline{K}(x)$ for some $x\in T$ implies that $\tilde{K}(y)= \overline{K}(y)$ for all $y\in T.$
\end{theorem}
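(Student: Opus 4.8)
The plan is to argue by contradiction, reproducing the bookkeeping of Theorem \ref{unique} with the finite index set replaced by a Borel comparison set. Since the hypotheses give $V_{convex}(\overline R;\{x\})\neq\varnothing$ and $V_{convex}(\tilde R;\{x\})\neq\varnothing$ for every $x\in T$, Lemma \ref{max-func-lemma} furnishes maximal functions $\overline K,\tilde K\colon T\to[\epsilon',\infty)$; these are in fact uniquely determined because, for fixed $x$ and $m$, the polar radius $\h_{x,\epsilon}(m)$ is strictly decreasing in $\epsilon$ (read off directly from its defining formula). I would first record two preliminary facts. (a) The conclusion is equivalent to the dichotomy that either $\tilde K\geq\overline K$ on all of $T$ or $\tilde K<\overline K$ on all of $T$, together with the rigidity that equality at a single point upgrades to equality everywhere; so it suffices to assume both $\{\tilde K\geq\overline K\}$ and $\{\tilde K<\overline K\}$ are nonempty and derive a contradiction, and after possibly swapping $\overline R$ and $\tilde R$ I may take $J=\{x\in T:\tilde K(x)>\overline K(x)\}$ nonempty with nonempty complement $C=\{x\in T:\tilde K(x)\leq\overline K(x)\}$. (b) The set $J$ is Borel: characterizing $\overline K(x)$ as the least $\epsilon\geq\epsilon'$ for which $\tilde H_\epsilon(x)$ still contains the fixed convex body bounded by $\overline R$, containment of this fixed compact body is a closed condition jointly in $(x,\epsilon)$, so $\overline K$ (and likewise $\tilde K$) is lower semicontinuous, hence Borel. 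Consequently $F(J)$ and $G_{convex}(\cdot;J)$ are defined.

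Next I would establish the core inclusion exactly as in Theorem \ref{unique}. Fix $m\in V_{convex}(\tilde R;J)$; by Definition \ref{supportdef} there is an $x\in J$ with $\h_{x,\tilde K(x)}(m)\geq\h_{w,\tilde K(w)}(m)$ for all $w\in C$. Because $\overline K(x)<\tilde K(x)$ while $\overline K(w)\geq\tilde K(w)$ for $w\in C$, monotonicity in $\epsilon$ yields the chain $\h_{x,\overline K(x)}(m)>\h_{x,\tilde K(x)}(m)\geq\h_{w,\tilde K(w)}(m)\geq\h_{w,\overline K(w)}(m)$, so $\h_{x,\overline K(x)}(m)>\h_{w,\overline K(w)}(m)$ for every $w\in C$. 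Thus under $\overline R$ the maximal polar radius at $m$ is attained in $J$ and strictly exceeds the contribution of every focus in $C$; in particular $m\in V_{convex}(\overline R;J)$. This gives $V_{convex}(\tilde R;J)\subseteq V_{convex}(\overline R;J)$, and combined with $G_{convex}(\tilde R;J)=F(J)=G_{convex}(\overline R;J)$ it forces $\mu_{g}\big(V_{convex}(\overline R;J)\setminus V_{convex}(\tilde R;J)\big)=0$.

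The crux, and the step I expect to be the main obstacle, is to contradict this by exhibiting a set of positive $\mu_g$-measure inside $V_{convex}(\overline R;J)\setminus V_{convex}(\tilde R;J)$. In the finite case this was immediate: $V_{convex}(\tilde R;J)$ was closed by Lemma \ref{closed-convex} (finite sets are closed) and sat inside the open set $\Int V_{convex}(\overline R;J)$, so the difference was open and, being nonempty by connectedness of $\overline{D_T^{\delta_\gamma}}$ together with $f_i>0$, had positive measure since $g>0$ on $D_T^{\delta_\gamma}$. None of these ingredients is automatic here: $J$ is only Borel, so Lemma \ref{closed-convex} does not directly yield closedness of $V_{convex}(\tilde R;J)$; the relevant suprema are over infinitely many foci and hence only lower semicontinuous rather than continuous; and $F$ may vanish on $C$, removing the quick source of the contradiction. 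I would attack this by applying Lemma \ref{closed-convex} to the closed sets $\overline C$ and $\overline J$. Since the attained maximum of $\overline R$ must be realized at some focus outside $\overline C$, the open set $\overline{D_T^{\delta_\gamma}}\setminus V_{convex}(\overline R;\overline C)$ lies in $\Int V_{convex}(\overline R;J)$, while $\overline{D_T^{\delta_\gamma}}\setminus V_{convex}(\tilde R;\overline J)$ is an open set disjoint from $V_{convex}(\tilde R;J)$. Any common point of these two open sets therefore lies in the gap, and a neighbourhood of it is an open, hence positive-measure, subset of the gap; so it suffices to show $V_{convex}(\overline R;\overline C)\cup V_{convex}(\tilde R;\overline J)\neq\overline{D_T^{\delta_\gamma}}$. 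Controlling the behaviour on the boundary $\overline C\setminus C$, where two foci may tie, is the delicate point, and it is precisely here that the nonemptiness-of-visibility hypothesis and the connectedness of $\overline{D_T^{\delta_\gamma}}$ must be brought to bear.

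Finally, the equality statement follows by applying the inequality statement in both directions: if $\tilde K(x)=\overline K(x)$ for some $x$, then $\tilde K(x)\geq\overline K(x)$ gives $\tilde K\geq\overline K$ on all of $T$, and $\overline K(x)\geq\tilde K(x)$ gives $\overline K\geq\tilde K$ on all of $T$, whence $\tilde K=\overline K$ throughout.
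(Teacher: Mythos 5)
Your overall skeleton is the same as the paper's: obtain maximal functions $\overline{K},\tilde{K}$ from Lemma \ref{max-func-lemma}, form the comparison set $J$ where $\tilde{K}>\overline{K}$, use the strict monotonicity of $\h_{x,\epsilon}(m)$ in $\epsilon$ to get the chain of inequalities showing every $m\in V_{convex}(\tilde{R};J)$ lies (strictly) inside $V_{convex}(\overline{R};J)$, and then aim for the contradiction $F(J)=G_{convex}(\tilde{R};J)<G_{convex}(\overline{R};J)=F(J)$; the derivation of the equality statement from the inequality statement is also the same.

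However, the proposal has a genuine gap at precisely the step you yourself flag as the crux: you never establish $\mu_g\bigl(V_{convex}(\overline{R};J)\setminus V_{convex}(\tilde{R};J)\bigr)>0$. You reduce it to the unproven covering claim $V_{convex}(\overline{R};\overline{C})\cup V_{convex}(\tilde{R};\overline{J})\neq\overline{D_T^{\delta_\gamma}}$ and then explicitly defer ``the delicate point'' about ties on $\overline{C}\setminus C$; with that step missing, no contradiction is ever reached and the theorem is not proved. The paper sidesteps the entire difficulty by fiat: it takes $J$ to be a nonempty \emph{closed} subset of $T$ on which $\tilde{K}>\overline{K}$, with $\tilde{K}\le\overline{K}$ on $T\setminus J$. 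Closedness of $J$ lets Lemma \ref{closed-convex} apply directly, so $V_{convex}(\tilde{R};J)$ is closed; since the monotonicity chain places it inside $\Int\bigl(V_{convex}(\overline{R};J)\bigr)$, the difference is open, nonempty (here the standing hypothesis that all visibility sets are nonempty is used), and hence of positive $\mu_g$-measure because $g>0$ on $D_T^{\delta_\gamma}$. To be fair, your instinct is pointing at a real weakness: the paper never justifies that the set $\{x\in T:\tilde{K}(x)>\overline{K}(x)\}$ may be assumed closed, which is exactly the issue your Borel-set treatment exposes. But as submitted, your argument does not repair this either; it only relocates the hole, and the proof remains incomplete.
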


\begin{proof}
By Lemma \ref{max-func-lemma}, there exists functions $\overline{K}:T\to[\epsilon', \infty)$ and $\tilde{K}:T\to[\epsilon', \infty)$ that are maximal functions of $\overline{R}$ and $\tilde{R}$ respectively. Note that $\overline{R}=[\overline{K}]$ and $\tilde{R}=[\tilde{K}]$.

    Let $J$ be a nonempty closed subset of $T$ such that for any $x \in J,$ $\tilde{K}(x)> \overline{K}(x)$, and for any $x\in T\setminus J$, $\tilde{K}(x)\le \overline{K}(x)$. Note that $m\in V_{convex}(\tilde{R};J)$ if and only if there exists some $z\in J$ such that $\h_{z,\tilde{K}(z)}(m)\ge \h_{z',\tilde{K}(z')}(m)$ for all $z'\in T\setminus J.$ For this $m$: since $\h_{z,\overline{K}(z)}(m)>\h_{z,\tilde{K}(z)}(m)$ for all $z\in J$ and $\h_{z',\overline{K}(z')}(m)\le \h_{z',\tilde{K}(z')}(m)$ for all $z'\in T\setminus J$, then there exists some $z\in J$ such that $\h_{z,\overline{K}(z)}(m)> \h_{z',\overline{K}(z')}(m)$ for all $z'\in T\setminus J$. Thus, any $m \in V_{convex}(\tilde{R};J)$ is an interior point of $V_{convex}(\overline{R};J)$; in other words, $V_{convex}(\tilde{R};J)\subseteq \Int(V_{convex}(\overline{R};J))$. Recall that, by Lemma \ref{closed-convex}, $V_{convex}(\tilde{R};J)$ is closed. Then $\Int(V_{convex}(\overline{R};J))\setminus V_{convex}(\tilde{R};J)$ is open and nonempty. So $\mu_g(V_{convex}(\overline{R};J)\setminus V_{convex}(\tilde{R};J))>0$. Therefore we must have
    \begin{equation}
        F(J)=G_{convex}(\tilde{R};J)<G_{convex}(\overline{R};J)=F(J)
    \end{equation}
which is a contradiction because $F(\omega)=G_{convex}(\tilde{R};\omega)=G_{convex}(\overline{R};\omega)$ for all Borel $\omega\subseteq T.$ The theorem is proved.
\end{proof}

\section{Weak Solutions in the Discrete Case}

 Here we consider the case of the refractor problem (\ref{weak3}) where the set $T$ is finite. We prescribe the measure $F$ in (\ref{weak3}) as a Dirac measure concentrated at points in $T.$  We recall notation for refractors in the discrete case. Let $T=\{x_1,x_2,\dots,x_k\}$. We set $H_i = H(x_i)$ and the eccentricity of $H_i$ we denote by $\epsilon_i$. For the hyperboloids $H_1,\dots,H_k$ define the refractor
\begin{equation}
    R= \partial\left(\bigcap_{i=1}^k \tilde{H}_i\right)\in \mathcal{R}_{convex}^{\epsilon'}(T).
\end{equation}

Since each hyperboloid $H_i$ is uniquely defined by its eccentricity $\epsilon_i$, the refractor $R$ can be identified with the point with coordinates $(\epsilon_1,\epsilon_2,\dots,\epsilon_k)$ in the region
\begin{equation}
    \epsilon_1\ge\epsilon',\epsilon_2\ge\epsilon',\dots,\epsilon_k\ge\epsilon'
\end{equation}
in $k-$dimensional euclidean space. Thus we can write a refractor $R\in \mathcal{R}_{convex}^{\epsilon'}(T)$ as $(\epsilon_1, \epsilon_2,\dots,\epsilon_k).$ We now recall Theorem 9 from \cite{KOT}.

\begin{theorem}[Theorem 9 in \cite{KOT}]\label{KOTthrm}
Let $T = \{x_1,\dots, x_k\}$ be a collection of $k$ distinct points in $\R^3\setminus \{\origin\}$, $k > 2.$ Assume that $T$ satisfies Hypothesis H1. Let $\gamma$, $\epsilon_M$, $\epsilon_{min}$, and $\epsilon_{max}$ be positive real numbers such that $\epsilon_0 +\gamma <\epsilon_M \le \epsilon_{min} \le \epsilon_{max} < \infty$, where $\epsilon_0$ is defined by (\ref{e0}). Assume we are given a nonnegative $g\in L^1(\mathbb{S}^2)$ such that $g\equiv 0$ outside $D_T^{\delta_\gamma}$ where $\delta_\gamma=\frac1{\epsilon_0+\gamma}$. Let $f_1,\dots,f_k$ be nonnegative
real numbers such that

\begin{equation}
    \sum_{i=1}^k f_i = \mu_{g}(D_T^{\delta_\gamma}).
\end{equation}

Suppose that there also exists some $\ell \in [k]$ such that for all $i \in [k]$, $i \neq \ell$,
\begin{equation}
   G_{convex}(R_\ell; x_i) \le f_i
\end{equation}
where $R_\ell = (\epsilon_1 = \epsilon_{max},...,\epsilon_{\ell-1} = \epsilon_{max}, \epsilon_\ell = \epsilon_{min}, \epsilon_{\ell+1} = \epsilon_{max},...,\epsilon_k = \epsilon_{max}),$ and

\begin{equation}
   G_{convex}(R_{\ell i}; x_\ell) < f_\ell
\end{equation}
where $R_{\ell i} = (\epsilon_1 = \epsilon_{max},\dots,\epsilon_{i-1} = \epsilon_{max}, \epsilon_i = \epsilon_M , \epsilon_{i+1} = \epsilon_{max},\dots,\epsilon_{\ell-1} = \epsilon_{max}, \epsilon_\ell =
\epsilon_{min},\epsilon_{\ell+1} = \epsilon_{max},...,\epsilon_k = \epsilon_{max}).$ Then there exists a refractor $R = (\epsilon_1,\dots,\epsilon_k) \in \mathcal{R}_{convex}^{\epsilon_M} (T)$ such
that

\begin{equation}
G_{convex}(R; x_i) = f_i \textnormal{ for all } i \in[k].
\end{equation}
\end{theorem}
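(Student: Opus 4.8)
The plan is to treat this as a statement about the monotone dependence of the energy functions $G_{convex}(\cdot\,;x_i)$ on the eccentricity vector $(\epsilon_1,\dots,\epsilon_k)$, and to realize the solution as the coordinatewise-least element of a suitable feasible set. The three structural facts I would record first (all consequences of the geometry already set up, and the content of the lemmas preceding this statement in \cite{KOT}) are: (i) for fixed $m$ the polar radius $\h_{x,\epsilon}(m)$ is strictly decreasing in $\epsilon$, which one checks directly since $\frac{\partial}{\partial\epsilon}\log\h_{x,\epsilon}(m)$ has numerator proportional to $-(\epsilon^2-2\epsilon\langle m,k_x\rangle+1)\le-(\epsilon-1)^2<0$; (ii) consequently, writing $V_{convex}(\{x_i\})=\{m:\h_{x_i,\epsilon_i}(m)\ge\h_{x_j,\epsilon_j}(m)\ \forall j\}$, the map $(\epsilon_1,\dots,\epsilon_k)\mapsto G_{convex}(R;x_i)$ is continuous, nonincreasing in $\epsilon_i$, and nondecreasing in each $\epsilon_j$ with $j\neq i$; and (iii) the conservation identity $\sum_{i=1}^k G_{convex}(R;x_i)=\mu_{g}(D_T^{\delta_\gamma})=\sum_i f_i$, valid because the visibility sets cover $D_T^{\delta_\gamma}$ and overlap only on a $\sigma$-null set.

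With these in hand I would normalize by fixing $\epsilon_\ell=\epsilon_{min}$ and work in the compact box $\Omega=\{(\epsilon_1,\dots,\epsilon_k):\epsilon_\ell=\epsilon_{min},\ \epsilon_M\le\epsilon_i\le\epsilon_{max}\ (i\neq\ell)\}$. Set $\mathcal{P}=\{(\epsilon_1,\dots,\epsilon_k)\in\Omega:G_{convex}(R;x_i)\le f_i\ \text{for all }i\neq\ell\}$. The first hypothesis, $G_{convex}(R_\ell;x_i)\le f_i$, says exactly that the top corner $R_\ell$ lies in $\mathcal{P}$, so $\mathcal{P}\neq\varnothing$, and continuity in (ii) makes $\mathcal{P}$ closed. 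The key observation is that $\mathcal{P}$ is closed under the coordinatewise minimum $\wedge$: if $R,R'\in\mathcal{P}$ and $\eta=R\wedge R'$, then for each $i\neq\ell$ I pick the parent realizing $\eta_i$ and note that $\eta_j$ is no larger than that parent's $j$-th coordinate for $j\neq i$; by the monotonicity in (ii) this forces $G_{convex}(R_\eta;x_i)\le f_i$. A closed, meet-closed subset of a box has a coordinatewise-least element $\epsilon^{*}=(\epsilon_1^{*},\dots,\epsilon_k^{*})$ (realize each coordinate infimum by finite meets and pass to the limit using closedness); let $R^{*}$ denote the corresponding refractor.

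Finally I would show $R^{*}$ solves the problem. Since $R^{*}\in\mathcal{P}$ we already have $G_{convex}(R^{*};x_i)\le f_i$ for $i\neq\ell$, so only a strict deficiency $G_{convex}(R^{*};x_{i_0})<f_{i_0}$ could occur. If such an $i_0$ had $\epsilon^{*}_{i_0}>\epsilon_M$, then decreasing $\epsilon_{i_0}$ slightly keeps $G_{convex}(\cdot\,;x_{i_0})\le f_{i_0}$ by continuity and can only lower the other energies by (ii), producing a point of $\mathcal{P}$ strictly below $\epsilon^{*}$ and contradicting minimality; hence $\epsilon^{*}_{i_0}=\epsilon_M$. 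To exclude this remaining case I invoke conservation: a strict deficiency at $i_0$ forces $G_{convex}(R^{*};x_\ell)>f_\ell$, while $\epsilon^{*}$ lies coordinatewise below the eccentricity vector of $R_{\ell i_0}$ (they agree in coordinates $\ell$ and $i_0$, and $\epsilon^{*}_j\le\epsilon_{max}$ elsewhere), so (ii) gives $G_{convex}(R_{\ell i_0};x_\ell)\ge G_{convex}(R^{*};x_\ell)>f_\ell$, contradicting the second hypothesis $G_{convex}(R_{\ell i_0};x_\ell)<f_\ell$. Therefore $G_{convex}(R^{*};x_i)=f_i$ for all $i\neq\ell$, and conservation yields $G_{convex}(R^{*};x_\ell)=f_\ell$, so $R^{*}\in\mathcal{R}_{convex}^{\epsilon_M}(T)$ is the desired refractor. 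I expect the main obstacle to be the importation of the geometric monotonicity, continuity, and conservation facts (i)--(iii); once those are granted, the delicate point in the combinatorial argument is closing the boundary case $\epsilon^{*}_{i_0}=\epsilon_M$, which is exactly where the two quantitative hypotheses on $R_\ell$ and $R_{\ell i}$ are consumed.
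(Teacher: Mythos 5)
This statement is not proved in the paper at all: it is quoted verbatim as Theorem~9 of \cite{KOT} and used as a black box (its only role here is to feed Corollary~\ref{wsol3coro} and the later existence theorems). So there is no in-paper proof to compare against; your proposal stands as an independent reconstruction, and having checked it, I believe it is correct. Your monotonicity computation in (i) is right: the logarithmic derivative of $\h_{x,\epsilon}(m)$ in $\epsilon$ has numerator $-(\epsilon^2-2\epsilon\langle m,k_x\rangle+1)\le -(\epsilon-1)^2<0$ over a positive denominator on $D_\epsilon(x)$, and (ii) follows because, by (\ref{equRconvex}), $V_{convex}(\{x_i\})$ is exactly the set where $\h_{x_i,\epsilon_i}$ attains the maximum. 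Facts (i)--(iii) are indeed the content of the lemmas preceding Theorem~9 in \cite{KOT} (monotonicity, continuity of $G_{convex}$ in the eccentricities, and energy conservation); the only point you state a bit casually is (iii), which needs that the pairwise overlaps of visibility sets are $\mu_g$-null --- this holds because two distinct hyperboloids with common focus $\origin$ have radial functions agreeing only on a $\sigma$-null set, and $\mu_g\ll\sigma$. The core of your argument --- that the feasible set $\mathcal{P}$ is nonempty (via $R_\ell$), closed, and meet-closed, hence has a coordinatewise-least element $\epsilon^*$, and that a strict deficiency at $i_0$ first forces $\epsilon^*_{i_0}=\epsilon_M$ by minimality and is then killed by conservation plus the comparison $G_{convex}(R_{\ell i_0};x_\ell)\ge G_{convex}(R^*;x_\ell)>f_\ell$ against the second hypothesis --- is sound, and it consumes the two quantitative hypotheses exactly where one expects. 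This Perron-style ``least element of a monotone feasible set'' organization is a legitimate (and arguably cleaner) alternative to the iterative coordinate-adjustment scheme used in \cite{KOT} itself; what it buys is that existence and the role of each hypothesis are isolated in two short, separate steps rather than interleaved in a convergence argument.
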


This theorem inspires an obvious corollary.

\begin{coro}\label{wsol3coro}
Let $T = \{x_1,\dots, x_k\}$ be a collection of $k$ distinct points in $\R^3\setminus \{\origin\}$, $k > 2.$ Assume that $T$ satisfies Hypothesis H1. Let $\gamma$, $\epsilon_M$, $\epsilon_{min}$, and $\epsilon_{max}$ be positive real numbers such that $\epsilon_0 +\gamma <\epsilon_M \le \epsilon_{min} \le \epsilon_{max} < \infty$, where $\epsilon_0$ is defined by (\ref{e0}). Assume we are given a nonnegative $g\in L^1(\mathbb{S}^2)$ such that $g\equiv 0$ outside $D_T^{\delta_\gamma}$ where $\delta_\gamma=\frac1{\epsilon_0+\gamma}$. Let $f_1,\dots,f_k$ be nonnegative
real numbers such that

\begin{equation}
    \sum_{i=1}^k f_i = \mu_{g}(D_T^{\delta_\gamma}).
\end{equation}

Suppose that there also exists some $\ell \in [k]$ where $f_\ell>0$ such that for all $i \in [k]$, $i \neq \ell$,

\begin{equation}
   G_{convex}(R_\ell; x_i)=0
\end{equation}
where $R_\ell = (\epsilon_1 = \epsilon_{max},...,\epsilon_{\ell-1} = \epsilon_{max}, \epsilon_\ell = \epsilon_{min}, \epsilon_{\ell+1} = \epsilon_{max},...,\epsilon_k = \epsilon_{max}),$ and

\begin{equation}
   G_{convex}(R_{\ell i}; x_\ell) =0
\end{equation}
where $R_{\ell i} = (\epsilon_1 = \epsilon_{max},\dots,\epsilon_{i-1} = \epsilon_{max}, \epsilon_i = \epsilon_M , \epsilon_{i+1} = \epsilon_{max},\dots,\epsilon_{\ell-1} = \epsilon_{max}, \epsilon_\ell =
\epsilon_{min},\epsilon_{\ell+1} = \epsilon_{max},...,\epsilon_k = \epsilon_{max}).$ Then there exists a refractor $R = (\epsilon_1,\dots,\epsilon_k) \in \mathcal{R}_{convex}^{\epsilon_M} (T)$ such
that

\begin{equation}
G_{convex}(R; x_i) = f_i \textnormal{ for all } i \in[k].
\end{equation}
\end{coro}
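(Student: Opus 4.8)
The plan is to deduce the corollary directly from Theorem \ref{KOTthrm}, since the two statements share identical data $(T, g, f_1, \dots, f_k)$, identical constants $\gamma, \epsilon_M, \epsilon_{min}, \epsilon_{max}$, and identical conclusions. The corollary's hypotheses are a strengthened (to equality) version of the theorem's hypotheses, so the whole argument reduces to checking that the corollary's equality conditions imply the theorem's inequality conditions; once that is done, a single application of the theorem finishes the proof.

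First I would verify the condition on the off-diagonal energies. The theorem requires $G_{convex}(R_\ell; x_i) \le f_i$ for all $i \neq \ell$, whereas the corollary supplies the stronger statement $G_{convex}(R_\ell; x_i) = 0$. Because each $f_i$ is nonnegative, the chain $0 = G_{convex}(R_\ell; x_i) \le f_i$ holds automatically, so this condition is immediate.

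Next I would verify the condition at the distinguished index $\ell$. The theorem requires the strict inequality $G_{convex}(R_{\ell i}; x_\ell) < f_\ell$ for all $i \neq \ell$. The corollary supplies $G_{convex}(R_{\ell i}; x_\ell) = 0$ together with the extra assumption $f_\ell > 0$; combining these gives $0 = G_{convex}(R_{\ell i}; x_\ell) < f_\ell$, exactly as needed. With both families of conditions checked, I would invoke Theorem \ref{KOTthrm} to produce a refractor $R = (\epsilon_1, \dots, \epsilon_k) \in \mathcal{R}_{convex}^{\epsilon_M}(T)$ satisfying $G_{convex}(R; x_i) = f_i$ for all $i \in [k]$, which is the claimed conclusion.

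There is no substantive obstacle here, which is why the result is stated as an obvious corollary. The only point deserving attention is the role of the hypothesis $f_\ell > 0$: it is precisely this assumption that upgrades the equality $G_{convex}(R_{\ell i}; x_\ell) = 0$ to the strict inequality demanded by the theorem. Were $f_\ell$ allowed to vanish, the strict inequality could fail and Theorem \ref{KOTthrm} would no longer be applicable, so I would be careful to flag that this single extra hypothesis is what makes the reduction go through.
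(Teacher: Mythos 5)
Your proposal is correct and is exactly the argument the paper intends: the corollary is stated immediately after Theorem \ref{KOTthrm} with no written proof precisely because the reduction is the one you give, namely $0 = G_{convex}(R_\ell; x_i) \le f_i$ from nonnegativity of the $f_i$, and $0 = G_{convex}(R_{\ell i}; x_\ell) < f_\ell$ from the extra hypothesis $f_\ell > 0$, followed by a single application of the theorem. Your closing remark correctly identifies $f_\ell > 0$ as the hypothesis that supplies the strict inequality the theorem demands.
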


We will now use the above corollary to prove the following proposition. 

\begin{prop}\label{line-conv}
Let $T = \{x_1,\dots, x_k\}$ be a collection of $k$ distinct points in $\R^3\setminus \{\origin\}$ such that $T$ satisfies Hypothesis H1 and $\min_{x,y\in T}\langle k_x,k_y\rangle=1.$ Suppose that we are given $\gamma>0$ such that $\epsilon_0+\gamma<\lim_{t\to K^+}\frac{1}{t-1}$ for $K=\frac{\max_{x\in T} |x|}{\min_{x\in T} |x|}$ and $\epsilon_0$ is defined by (\ref{e0}). Assume we are given a nonnegative $g\in L^1(\mathbb{S}^2)$ such that $g\equiv 0$ outside $D_T^{\delta_\gamma}$ where $\delta_\gamma=\frac1{\epsilon_0+\gamma}$. Let $f_1,\dots,f_k$ be nonnegative
real numbers such that
\begin{equation}
    \sum_{i=1}^k f_i = \mu_{g}(D_T^{\delta_\gamma})
\end{equation}
and for the $\ell\in[k]$ where $|x_\ell|=\max_{y\in T}|y|$, $f_\ell>0.$ 

Then there exists an $\epsilon_M\in(\epsilon_0+\gamma,\lim_{t\to K^+}\frac{1}{t-1})$ such that we can construct a convex, rotationally symmetric refractor $R = (\epsilon_1,\dots,\epsilon_k) \in \mathcal{R}_{convex}^{\epsilon_M} (T)$ where
\begin{equation}
G_{convex}( R;x_i) = f_i \textnormal{ for all } i \in[k].
\end{equation}
\end{prop}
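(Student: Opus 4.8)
The plan is to reduce everything to Corollary~\ref{wsol3coro}. The hypothesis $\min_{x,y\in T}\langle k_x,k_y\rangle = 1$ is equality in Cauchy--Schwarz for every pair of unit vectors, so $k_x=k_y$ throughout; hence all the $x_i$ lie on one ray from $\origin$, and I write $k_0$ for that common direction. Then the aperture $D_T^{\delta_\gamma}$ is the single spherical cap $\{m:\langle m,k_0\rangle>\delta_\gamma\}$, and every polar radius
\[
\h_{x,\epsilon}(m) = \frac{|x|(\epsilon^2-1)}{2\epsilon(\epsilon\langle m,k_0\rangle - 1)}
\]
depends on $m$ only through the scalar $s:=\langle m,k_0\rangle\in(\delta_\gamma,1]$; in particular the whole configuration is rotationally symmetric about the axis spanned by $k_0$. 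I take $\ell$ to be the index of the farthest point, $|x_\ell|=L$ (unique since the points are distinct and collinear with $\origin$), which is exactly the index for which $f_\ell>0$ is assumed. The task is then to produce eccentricities $\epsilon_0+\gamma<\epsilon_M\le\epsilon_{min}\le\epsilon_{max}$ making the two vanishing hypotheses of the corollary hold.

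First I record the two monotonicity facts that drive the comparison, both read off from the displayed formula: for $s$ fixed in $(\delta_\gamma,1]$, $\h_{x,\epsilon}(s)$ is strictly increasing in $|x|$ and strictly decreasing in $\epsilon$ (differentiating in $\epsilon$, the numerator of the derivative is $-(\epsilon^2-2s\epsilon+1)$, which is negative for $s\le1$ and $\epsilon>1$). Since a focus is visible on $R$ precisely where its hyperboloid attains the supremum $\sup_x\h_{x,\epsilon_x}$ defining $R$ in~(\ref{equRconvex}), these facts reduce visibility to a pointwise comparison in $s$. For the first hypothesis, in $R_\ell=(\dots,\epsilon_\ell=\epsilon_{min},\dots)$ with all other eccentricities equal to $\epsilon_{max}\ge\epsilon_{min}$, the farthest focus $x_\ell$ has both the largest $|x|$ and the smallest eccentricity, so $\h_{x_\ell,\epsilon_{min}}(s)>\h_{x_i,\epsilon_{max}}(s)$ for every $i\ne\ell$ and every $s$ in the aperture. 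Hence each such $x_i$ is invisible and $G_{convex}(R_\ell;x_i)=0$.

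The substantive step is the second hypothesis, $G_{convex}(R_{\ell i};x_\ell)=0$, where $x_i$ carries the small eccentricity $\epsilon_M$ and $x_\ell$ carries $\epsilon_{min}$; I must show $x_i$ (and therefore not $x_\ell$) dominates throughout the aperture, i.e. $\h_{x_i,\epsilon_M}(s)>\h_{x_\ell,\epsilon_{min}}(s)$ for all $s\in(\delta_\gamma,1]$. Dividing, the $s$-dependence is carried by $\frac{\epsilon_{min}s-1}{\epsilon_M s-1}$, whose $s$-derivative has sign of $\epsilon_M-\epsilon_{min}\le0$, so the ratio $\h_{x_i,\epsilon_M}/\h_{x_\ell,\epsilon_{min}}$ is minimized at $s=1$, the axial direction $m=k_0$. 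Since $\h_{x,\epsilon}(1)=\tfrac{|x|}{2}(1+\tfrac1\epsilon)$, domination at $s=1$ is exactly
\[
\frac{1+1/\epsilon_M}{1+1/\epsilon_{min}} > \frac{|x_\ell|}{|x_i|},
\]
and the worst case over $i\ne\ell$ is the closest point, where the right side equals $K=L/\ell$. Now $1+1/\epsilon_M>K$ holds iff $\epsilon_M<\frac1{K-1}$, so the hypothesis $\epsilon_0+\gamma<\frac1{K-1}=\lim_{t\to K^+}\frac1{t-1}$ is precisely what makes the interval $(\epsilon_0+\gamma,\frac1{K-1})$ nonempty: I choose any $\epsilon_M$ there, then take $\epsilon_{min}$ large enough (with $\epsilon_{max}=\epsilon_{min}$) that $1+1/\epsilon_{min}<(1+1/\epsilon_M)/K$. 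This forces $\h_{x_i,\epsilon_M}(s)>\h_{x_\ell,\epsilon_{min}}(s)$ for all $i\ne\ell$ and all $s$ in the aperture, so $x_\ell$ is invisible in $R_{\ell i}$ and $G_{convex}(R_{\ell i};x_\ell)=0$. Throughout I must also verify that every hyperboloid in play is defined over the whole aperture, which holds because each eccentricity exceeds $\epsilon_0+\gamma=1/\delta_\gamma$, so its domain $\{s>1/\epsilon\}$ contains $\{s>\delta_\gamma\}$.

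With both hypotheses checked, Corollary~\ref{wsol3coro} delivers a refractor $R=(\epsilon_1,\dots,\epsilon_k)\in\mathcal{R}_{convex}^{\epsilon_M}(T)$ with $G_{convex}(R;x_i)=f_i$ for all $i$; the cases $k\le2$, where the corollary does not apply, are obtained directly by the same pointwise-domination comparison. Finally, $R$ is rotationally symmetric about the axis through $k_0$: its polar radius is $\sup_i\h_{x_i,\epsilon_i}(m)$, and since each $\h_{x_i,\epsilon_i}(m)$ depends on $m$ only through $\langle m,k_0\rangle$, so does the supremum, which is exactly invariance under rotation about $k_0$. I expect the main obstacle to be the quantitative estimate of the third paragraph---identifying the axial direction $s=1$ as the binding one and showing that domination there is governed by the threshold $\frac1{K-1}$---together with the bookkeeping that the chosen eccentricities keep all relevant hyperboloids defined over the aperture and keep $x_\ell$ the unique dominant focus in $R_\ell$.
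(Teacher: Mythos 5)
You take essentially the same route as the paper's own proof: collinearity ($k_x=k_y$ for all $x,y\in T$, hence $\epsilon_0=1$) reduces everything to a rotationally symmetric picture, the farthest point is taken as the distinguished index $\ell$ of Corollary~\ref{wsol3coro}, and the two vanishing hypotheses of that corollary are arranged by choosing $\epsilon_M<\frac{1}{K-1}$ and $\epsilon_{min}=\epsilon_{max}$ sufficiently large. Even your key computation is the paper's: its ``if and only if'' step, $\frac{|x_1|}{2}<\frac{|x_k|(1-\epsilon_M^2)}{2\epsilon_M(1-\epsilon_M)}$, is exactly the evaluation at the axial direction $s=\langle m,k_x\rangle=1$, where $\h_{x,\epsilon}(k_x)=\frac{|x|}{2}\left(1+\frac1\epsilon\right)$, which is your identification of $s=1$ as the binding direction. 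The only stylistic difference is that the paper compares $\h_{x_i,\epsilon_M}$ against the plane limit of Proposition~\ref{eccplane} (the $\epsilon\to\infty$ infimum of the farthest point's hyperboloids) and then says ``by continuity'' for large $\epsilon_{min}$, whereas you compare the two hyperboloids directly, prove the ratio $\h_{x_i,\epsilon_M}/\h_{x_\ell,\epsilon_{min}}$ is monotone in $s$, and impose the explicit criterion $1+1/\epsilon_{min}<(1+1/\epsilon_M)/K$; your version is more quantitative but it is the same argument.

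The one genuine gap is your closing claim that the cases $k\le2$ ``are obtained directly by the same pointwise-domination comparison.'' For $k=2$ with $f_1>0$ and $f_2>0$ this fails: domination arguments can only produce the extreme refractors in which one focus receives all of $\mu_g(D_T^{\delta_\gamma})$. To realize an arbitrary positive split you must fix one eccentricity and vary the other between the regime where $x_2$ receives nothing and the regime $\epsilon_2=\epsilon_M$ where (by your own threshold estimate) $x_2$ receives everything, and then invoke continuity of $G_{convex}$ in the eccentricities together with the intermediate value theorem; this is precisely why the paper handles $k=2$ by citing Lemma~8 of \cite{KOT}. Your proposal never states or uses any continuity of the energy function, so as written it cannot conclude in that case. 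A second, smaller imprecision --- shared with the paper, so not a defect of your approach relative to it --- is that strict domination of the \emph{defining} hyperboloids does not literally make a focus invisible, since visibility is defined through supporting hyperboloids of arbitrary eccentricity; in this collinear setting one can check that any supporting hyperboloid for $x_\ell$ in $R_{\ell i}$ can touch the refractor only in the single axial direction, a $\mu_g$-null set, so the energies you need do indeed vanish.
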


\begin{proof}

 Note that $\max_{x,y}\langle k_x,k_y\rangle=1$ implies that $k_x=k_y$ for all $x,y\in T,$ and $\epsilon_0=1$ as defined by (\ref{e0}). The case where $k=1$ is trivial; let $k\ge 2$. Assume that $|x_i|\ge|x_{i+1}|$ for all $i\in[k-1]$ and thus $f_1>0.$ Recall that for $x\in T$ by Proposition \ref{eccplane}, $\h_{\epsilon,x}(m)\to \frac{|x|}{2\langle m,k_x\rangle}$ as $\epsilon\to \infty$ for $m\in D_T^{\delta_\gamma}$.

Observe that
\begin{equation}
    \frac{|x_1|}{2\langle m,k_x\rangle}<\frac{|x_k|(1-\epsilon_M^2)}{2\epsilon_M(1-\epsilon_M\langle m , k_{x}\rangle)}\textnormal{ for }m\in  D_T^{\delta_\gamma},
\end{equation}
if and only if
\begin{equation}
    \frac{|x_1|}{2}<\frac{|x_k|(1-\epsilon_M^2)}{2\epsilon_M(1-\epsilon_M)}.
\end{equation}
Thus we have that $\epsilon_M<\frac1{K-1}$ where $K=\frac{|x_1|}{|x_k|}$. Note that by Hypothesis H1 and the fact that $k\ge2$, we have $1<K<2$ and $1<\frac1{K-1}<\infty$. Thus we can have that $1=\epsilon_0<\epsilon_0+\gamma<\epsilon_M<\frac1{K-1}.$ If $k=2$, by the continuity implied by Lemma 8 of \cite{KOT}, there exists a refractor $R = (\epsilon_1,\epsilon_2) \in \mathcal{R}_{convex}^{\epsilon_M} (T)$ such
that $G_{convex}(R; x_i) = f_i \textnormal{ for all } i \in[k].$

If $k>2,$ we borrow language and notation from Corollary \ref{wsol3coro}. By continuity, if $\epsilon_{min}=\epsilon_{max}$ be sufficiently large such that $\frac1{K-1}<\epsilon_{min}=\epsilon_{max}$, then, assuming that $\epsilon_M<\frac1{K-1}$, $G_{convex}(R_1;x_i)=0$ and $G_{convex}(R_{1i};x_1)=0$ for all $i\in[k]$ such that $i\neq 1$. Therefore by Corollary \ref{wsol3coro}, there exists a refractor $R = (\epsilon_1,\dots,\epsilon_k) \in \mathcal{R}_{convex}^{\epsilon_M} (T)$ such
that $G_{convex}(R; x_i) = f_i \textnormal{ for all } i \in[k].$

\end{proof}

The above proposition motivates our main result.

\begin{theorem}\label{convRefr}
Assume that we are given some $w,W\in(0,\infty)$ where $w>\frac{W}2$. Given some $m_*\in \sphere^2$, let 
\begin{equation}
    S(m_*,\xi)=\{x\in\R^3|w\le|x|\le W, \langle k_x,m_*\rangle\ge1-\xi \}
\end{equation}
where $1-\cos\left(\frac12\arccos\left(\frac{W}{2w}\right)\right)>\xi>0$; note that $S(m_*,\xi)$ satisfies Hypothesis H1. Let $T = \{x_1,\dots, x_k\}\subset S(m_*,\xi)$ be a collection of $k$ distinct points. Recall that $\delta_\gamma=\frac1{\epsilon_0+\gamma}$ where $\gamma>0$ and $\epsilon_0$ is defined by (\ref{e0}).

Then there exists positive $\xi$ and $\gamma$ such that, for any nonnegative $g\in L^1(\mathbb{S}^2)$ such that $g\equiv 0$ outside $D_T^{\delta_\gamma}$ and any collection $f_1,\dots,f_k$ of nonnegative real numbers where
\begin{equation}
    \sum_{i=1}^k f_i = \mu_{g}(D_T^{\delta_\gamma})
\end{equation}
and $f_\ell>0$ for the $\ell\in[k]$ where $|x_\ell|=\max_{y\in T}|y|$, there exists an $\epsilon_M>\epsilon_0+\gamma$ such that we can construct a refractor $R = (\epsilon_1,\dots,\epsilon_k) \in \mathcal{R}_{convex}^{\epsilon_M} (T)$ where
\begin{equation}
G_{convex}(R; x_i) = f_i \textnormal{ for all } i \in[k].
\end{equation}
\end{theorem}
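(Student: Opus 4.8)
The plan is to deduce the theorem from Corollary \ref{wsol3coro}, upgrading the collinear estimates of Proposition \ref{line-conv} to the slightly spread configurations $T\subset S(m_*,\xi)$ by a uniform perturbation argument. Fix $\ell\in[k]$ to be the index of the farthest point, so that $|x_\ell|=\max_{y\in T}|y|$ and $f_\ell>0$ by hypothesis, and put $K=\frac{\max_{x\in T}|x|}{\min_{x\in T}|x|}$, noting $1\le K\le W/w<2$. Because $g$ and the $f_i$ are arbitrary apart from their support in $D_T^{\delta_\gamma}$ and their total mass, it suffices to produce eccentricities $\epsilon_M\le\epsilon_{min}\le\epsilon_{max}$ (with $\epsilon_M>\epsilon_0+\gamma$) for which the two hypotheses of Corollary \ref{wsol3coro}, $G_{convex}(R_\ell;x_i)=0$ and $G_{convex}(R_{\ell i};x_\ell)=0$ for all $i\neq\ell$, hold as \emph{emptiness} statements for the associated visibility sets on $\overline{D_T^{\delta_\gamma}}$; an empty visibility set carries zero $\mu_g$-measure for every admissible $g$.

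First I would record the two pointwise comparisons on which everything rests. By Proposition \ref{eccplane}, $\h_{x,\epsilon}(m)\to\frac{|x|}{2\langle m,k_x\rangle}$ as $\epsilon\to\infty$, uniformly on $\overline{D_T^{\delta_\gamma}}$, and a direct inspection of the polar-radius formula shows $\h_{x,\epsilon}(m)>\frac{|x|}{2\langle m,k_x\rangle}$ for every finite $\epsilon>1$, with the excess increasing as $\epsilon$ decreases toward $1$. For the first hypothesis I would keep $\epsilon_{min}$ at a moderate value and let $\epsilon_{max}$ be large: each $H_i$ ($i\neq\ell$) then lies arbitrarily close to the directrix plane of $x_i$, whereas $H_\ell$ at $\epsilon_{min}$ sits strictly outside the directrix of $x_\ell$; since $|x_\ell|\ge|x_i|$, the surface of $H_\ell$ strictly dominates that of $H_i$ throughout the aperture, so $x_i$ is nowhere supporting and $G_{convex}(R_\ell;x_i)=0$. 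For the second hypothesis I would exploit that small eccentricity inflates the polar radius: in the collinear model the comparison $\h_{x_i,\epsilon_M}(m)>\h_{x_\ell,\epsilon_{min}}(m)$ at $m=m_*$ reduces to $\epsilon_M<\frac1{K-1}$, and this choice makes $H_i$ strictly dominate $H_\ell$ everywhere on the aperture, giving $G_{convex}(R_{\ell i};x_\ell)=0$. Unlike the exactly collinear situation of Proposition \ref{line-conv}, distinct target points may now share a common radius, so I keep $\epsilon_{min}$ \emph{strictly} below $\epsilon_{max}$: the resulting definite excess of $H_\ell$ over its directrix is what lets $x_\ell$ beat an equal-radius competitor whose direction differs slightly from $k_{x_\ell}$.

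Next I would pass from the axis to the cone. When $\xi=0$ every $k_x$ equals $m_*$ and the inequalities above hold with a strictly positive margin---this is the content of Proposition \ref{line-conv}. For $T\subset S(m_*,\xi)$ each $k_x$ lies within angle $\arccos(1-\xi)$ of $m_*$, so every inner product $\langle m,k_x\rangle$, and hence every polar radius appearing above, is perturbed by an amount that tends to $0$ as $\xi\to0$, uniformly for $m\in\overline{D_T^{\delta_\gamma}}$ and uniformly over all configurations $T\subset S(m_*,\xi)$ (using compactness of $S(m_*,\xi)$ and of $\sphere^2$, the eccentricities having been frozen at fixed finite values). Choosing $\xi$ small therefore preserves the finitely many strict inequalities of the previous step; at the same time $c=\min_{x,y\in T}\langle k_x,k_y\rangle\to1$, so $\epsilon_0$ from (\ref{e0}) tends to its collinear value and a small $\gamma$ keeps $\epsilon_M$ admissible.

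The step I expect to be the main obstacle is not any individual inequality but their \emph{simultaneous feasibility, uniformly over all admissible $T$}: for every $T\subset S(m_*,\xi)$ I must be able to select $\epsilon_M$ with $\epsilon_0+\gamma<\epsilon_M<\frac1{K-1}$, i.e. the window $\left(\epsilon_0+\gamma,\ \frac1{K-1}\right)$ must be nonempty for each configuration allowed by the cap. This is exactly where $w>\frac W2$ and the smallness of $\xi$ must do their work: the former bounds the radius spread through $K\le W/w$, and the latter pins $c$, and hence $\epsilon_0$, near its collinear value so that $\epsilon_0+\gamma$ stays below $\frac1{K-1}$. Quantifying $\epsilon_0$ in terms of $\ell=\min_{x}|x|$, $L=\max_x|x|$ and $c$ via (\ref{e0}), and checking that the window survives for the full range of radius ratios that a cap of radius $\xi$ permits---together with verifying the two domination inequalities at the extreme directions $m$ of the aperture rather than only at $m_*$---is the crux. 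Once this feasibility is established the remaining estimates are routine, and Corollary \ref{wsol3coro} produces the desired refractor $R=(\epsilon_1,\dots,\epsilon_k)$.
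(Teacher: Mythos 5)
Your proposal follows the same route as the paper's own proof (reduce to Corollary \ref{wsol3coro} by forcing the relevant visibility sets to be empty, starting from the collinear picture of Proposition \ref{line-conv} and perturbing in $\xi$), and your treatment of the individual domination inequalities is sound --- in one place more careful than the paper's, since you correctly compare $\h_{x_i,\epsilon_M}$ (not $\h_{x_\ell,\epsilon_M}$) against $\h_{x_\ell,\epsilon_{min}}$. However, the step you yourself flag as ``the crux'' --- nonemptiness of the window $\left(\epsilon_0+\gamma,\tfrac{1}{K-1}\right)$ --- is a genuine gap, and it cannot be closed under the stated hypothesis $w>\tfrac{W}{2}$. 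Quantify it: writing $\ell=\min_{x\in T}|x|$, $L=\max_{x\in T}|x|$, $K=L/\ell$, formula (\ref{e0}) gives, as $\xi\to 0$ (so $c\to 1$), $\epsilon_0\to\frac{\ell+\sqrt{(L-\ell)^2}}{2\ell-L}=\frac{L}{2\ell-L}=\frac{K}{2-K}$; moreover $\epsilon_0$ is decreasing in $c$, so $\epsilon_0\ge\frac{K}{2-K}$ for every $\xi>0$. The window is nonempty only if $\frac{K}{2-K}<\frac{1}{K-1}$, which is equivalent to $K^2<2$. Since $w>\frac{W}{2}$ only guarantees $K<2$, any admissible configuration with $\sqrt2\le K<2$ (for instance, points lying on the axis through $m_*$ at radii $w$ and $W$ with $W/w>\sqrt 2$, which lie in $S(m_*,\xi)$ for every $\xi$) has an empty window: no $\epsilon_M>\epsilon_0+\gamma$ can satisfy the requirement $\epsilon_M<\frac1{K-1}$ that your own reduction at $m=m_*$ shows to be necessary, so the hypothesis $G_{convex}(R_{\ell i};x_\ell)=0$ of Corollary \ref{wsol3coro} cannot be arranged by this construction, no matter how small $\xi$ and $\gamma$ are taken.

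You should know that the paper's proof stumbles at exactly this point: it asserts that $\epsilon_0\to 1$ as $\min_{x,y\in T}\langle k_x,k_y\rangle\to 1$, which is false whenever $L>\ell$ (the limit is $\frac{L}{2\ell-L}>1$, which blows up as $w$ decreases toward $\frac W2$); the same incorrect claim ($\epsilon_0=1$ in the collinear case) appears inside the proof of Proposition \ref{line-conv}, which is precisely why that proposition carries the explicit hypothesis $\epsilon_0+\gamma<\lim_{t\to K^+}\frac{1}{t-1}$. So your instinct about where the difficulty sits is exactly right, but it is not a technicality awaiting verification --- it fails at the stated level of generality. The argument (yours and the paper's) does go through if one strengthens the hypothesis to $w>\frac{W}{\sqrt2}$ (equivalently $K<\sqrt2$), or imports the window assumption of Proposition \ref{line-conv} into the statement of Theorem \ref{convRefr}.
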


\begin{proof}
Observe that $\xi\to 0$ implies that $\min_{x,y\in T}\langle k_x,k_y\rangle\to 1$. Assume that $|x_\ell|=\max_{y\in T}|y|$. Let $\h_{ Tmax,\epsilon}(m)=\max_{x\in T}\h_{x,\epsilon}(m)$ and $P_{Tmax}(m)=\max_{x\in T}\frac{|x|}{2\langle m,k_{x}\rangle}$ where $m\in D_T^{\delta_\gamma}.$ Note that $\h_{ Tmax,\epsilon}(m)\to \h_{x_\ell,\epsilon}(m)$ and $P_{ Tmax}(m)\to \frac{|x_\ell|}{2\langle m,k_{x_\ell}\rangle}$ as $\min_{y\in T}\langle k_{x_\ell},k_y\rangle\to 1$.

We borrow language and notation from Corollary \ref{wsol3coro}. Choose an $\epsilon_{min}$; then, by continuity, there exists a $\xi>0$ such that if $\min_{x,y\in T}\langle k_x,k_y\rangle\ge1-\xi$, then we have $P_{Tmax}(m)<\h_{x_\ell,\epsilon_{min}}(m)$ for all $m\in D_T^{\delta_\gamma}.$ Therefore by continuity there exists an $\epsilon_{max}$ such that $P_{T max}(m) < \h_{T max,\epsilon_{max}}(m) < \h_{x_\ell,\epsilon_{min} }(m)$ for all $m \in D_T^{\delta_\gamma}.$

Observe that  $\epsilon_0\to 1$ as $\min_{x,y\in T}\langle k_x,k_y\rangle\to 1$ and recall that a degenerate hyperboloid has eccentricity $1.$ Then, for sufficiently small $\gamma>0,$ by continuity we can choose $\epsilon_0+\gamma<\epsilon_M<\epsilon_{min}<\epsilon_{max}$, such that $P_{Tmax}(m)<\h_{x_\ell,\epsilon_{max}}(m)<\h_{Tmax,\epsilon_{min}}(m)<\h_{x_\ell,\epsilon_{M}}(m)$ for all $m\in D_T^{\delta_\gamma}$. 

Then $G_{convex}(R_\ell;x_i)=0$ and $G_{convex}(R_{\ell i};x_\ell)=0$ for all $i\in[k]$ such that $i\neq \ell$. Then by Corollary \ref{wsol3coro}, there exists a refractor $R = (\epsilon_1,\dots,\epsilon_k) \in \mathcal{R}_{convex}^{\epsilon_M} (T)$ such that $G_{convex}(R; x_i) = f_i \textnormal{ for all } i \in[k].$

\end{proof}

\section{Weak Solutions in the General Case}
In this section, we extend the results of Theorems \ref{convRefr} and \ref{unique} to the case of more general sets $T$ and
energy distributions $F$. We consider the case where prescribe the measure $F$ as a Lebesgue measure over $T$, specifically
\begin{equation}\label{lebesgue}
    F(\omega)=\int_\omega f(x)d\lambda(x)\textnormal{ for any Borel set }\omega\subseteq T
\end{equation}
for some given nonnegative function $f \in L^1(T )$; here $\lambda$ is the Lebesgue measure on $T$. 

\begin{theorem}\label{continuous}
Assume that we are given some $w,W\in(0,\infty)$ where $w>\frac{W}2$. Given some $m_*\in \sphere^2$, let 
\begin{equation}
    S(m_*,\xi)=\{x\in\R^3|w\le|x|\le W, \langle k_x,m_*\rangle\ge1-\xi \}
\end{equation}
where $1-\cos\left(\frac12\arccos\left(\frac{W}{2w}\right)\right)>\xi>0$; note that $S(m_*,\xi)$ satisfies Hypothesis H1. Let $T \subseteq S(m_*,\xi)$ be a closed set. Recall that $\delta_\gamma=\frac1{\epsilon_0+\gamma}$ where $\gamma>0$ and $\epsilon_0$ is defined by (\ref{e0}).  

Then there exists positive $\xi$ and $\gamma$ such that, for any nonnegative $f\in L^1(T)$ with a measure $F$ defined by (\ref{lebesgue}), and any nonnegative $g\in L^1(\mathbb{S}^2)$ where $g\equiv 0$ outside $D_T^{\delta_\gamma}$ where
\begin{equation}
    F(T) = \mu_{g}(D_T^{\delta_\gamma}),
\end{equation}
 there exists an $\epsilon_M>\epsilon_0+\gamma$ such that we can construct a convex refractor $R \in \mathcal{R}_{convex}^{\epsilon_M} (T)$ where $R$ that is a convex weak solution to the refractor problem (\ref{weak3}).
\end{theorem}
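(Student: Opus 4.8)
The plan is to reduce the continuous problem to the discrete existence result (Theorem~\ref{convRefr}) by an approximation argument, exploiting that convex refractors form a compact family and that the energy functional is continuous with respect to convergence of convex refractors. Fix $\xi$ and $\gamma$ as provided by Theorem~\ref{convRefr}; the point is that these depend only on $w,W,m_*$ and not on the particular finite subset of $S(m_*,\xi)$, which is exactly what makes a uniform limit possible. Choose an increasing sequence of finite sets $T_n\subset T$ that become dense in $T$, always including a point $x_\ell^{(n)}$ of maximal radius, together with a Borel partition $\{\omega_i^{(n)}\}$ of $T$ into cells of diameter tending to $0$ with $x_i^{(n)}\in\omega_i^{(n)}$, and set $f_i^{(n)}=F(\omega_i^{(n)})$. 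Since $g$ is supported in $D_T^{\delta_\gamma}\subseteq D_{T_n}^{\delta_\gamma}$, the same $g$ is admissible for every $n$ and $\sum_i f_i^{(n)}=F(T)=\mu_{g}(D_T^{\delta_\gamma})=\mu_{g}(D_{T_n}^{\delta_\gamma})$. After a vanishing perturbation of the masses (to guarantee $f_\ell^{(n)}>0$ at the maximal-radius point while keeping the total fixed), Theorem~\ref{convRefr} yields a convex refractor $R_n\in\mathcal{R}_{convex}^{\epsilon_M}(T_n)$ with $G_{convex}(R_n;x_i^{(n)})=f_i^{(n)}$; writing $F_n=G_{convex}(R_n;\cdot)$ for the associated discrete energy measure, the shrinking cells give $F_n\rightharpoonup F$ weakly-$*$ on $T$.

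Next I would extract a limiting refractor. Each $R_n$ is the boundary of an intersection of convex bodies bounded by hyperboloids of eccentricity $\ge\epsilon_M$ with foci in the fixed compact set $T$; by Hypothesis H1 the radial functions $\rho_n=\sup_{x\in T_n}\h_{x,K_n(x)}$ are uniformly bounded above and below and uniformly Lipschitz on $\overline{D_T^{\delta_\gamma}}$. By Arzel\`a--Ascoli (equivalently, Blaschke selection for the convex bodies they bound) a subsequence converges uniformly, $\rho_n\to\rho$, to a convex refractor $R$. Since $T$ is closed and every supporting hyperboloid of $R$ arises as a limit of supporting hyperboloids of the $R_n$, whose foci lie in $T_n\subseteq T$, each supporting hyperboloid of $R$ has its focus in $T$; hence $R\in\mathcal{R}_{convex}^{\epsilon_M}(T)$ and $\alpha_{convex}(m)\subseteq T$ for all $m\in D_T^{\delta_\gamma}$, so $R$ is a candidate weak solution.

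The heart of the argument, and the step I expect to be the main obstacle, is passing to the limit in the energy: showing $G_{convex}(R_n;\cdot)\rightharpoonup G_{convex}(R;\cdot)$, which combined with $F_n\rightharpoonup F$ and uniqueness of weak-$*$ limits forces $G_{convex}(R;\omega)=F(\omega)$ for every Borel $\omega\subseteq T$. For a continuous test function $\phi$ on $T$ one has $\int_T\phi\,dG_{convex}(R_n;\cdot)=\int_{\sphere^2}\phi\bigl(\alpha_{convex}^{(n)}(m)\bigr)g(m)\,d\sigma(m)$, where $\alpha_{convex}^{(n)}(m)$ is the ($\sigma$-a.e.\ single-valued) focus of the hyperboloid supporting $R_n$ at $r_n(m)$. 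The convex body bounded by $R$ has, by Alexandrov's theorem, a unique tangent plane at $\sigma$-a.e.\ boundary point, and since $g\,d\sigma\ll\sigma$ the ``edge'' set where the support of $R$ is non-unique is $\mu_{g}$-null. At a direction $m$ off this set, the Hausdorff convergence of the convex bodies forces the supporting plane at $r_n(m)$ to converge to that at $r(m)$; thus the normal $n(m)$ converges, the refracted direction converges, and the unique focus $\alpha_{convex}^{(n)}(m)\in T_n$ converges to $\alpha_{convex}(m)\in T$ (using density of $T_n$ and closedness of $T$). Dominated convergence then delivers $\int_{\sphere^2}\phi(\alpha_{convex}^{(n)}(m))g\,d\sigma\to\int_{\sphere^2}\phi(\alpha_{convex}(m))g\,d\sigma=\int_T\phi\,dG_{convex}(R;\cdot)$.

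The delicate points to verify carefully are precisely the two facts that make this limit work: the $\mu_{g}$-nullity of the edge set, and the $\sigma$-a.e.\ continuity of the focus-selection map under convergence of convex refractors. Both rest on convexity (Alexandrov a.e.\ differentiability of the boundary and convergence of supporting planes under Hausdorff convergence) together with the absolute continuity of $\mu_{g}$ with respect to $\sigma$; this is exactly the mechanism anticipated in the introduction whereby convexity upgrades the discrete (Dirac) solutions to a genuine solution for the continuous distribution. Once $G_{convex}(R;\cdot)=F$ is established and $\alpha_{convex}(m)\subseteq T$ is noted, $R$ is the desired convex weak solution to~(\ref{weak3}).
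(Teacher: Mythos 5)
Your proposal is correct and follows essentially the same route as the paper: discretize $F$ over a Borel partition of shrinking diameter, solve each discrete problem via Theorem \ref{convRefr}, extract a limiting convex body by Blaschke selection, verify that its supporting hyperboloids have foci in the compact set $T$, and pass to the limit in the energy by exploiting the a.e.\ single-valuedness of the refractor maps together with weak convergence against continuous test functions. The only cosmetic differences are that the paper secures the positivity hypothesis of Theorem \ref{convRefr} by restricting to the points of positive mass rather than perturbing the masses, and it cites Reidemeister's theorem on singularities of convex surfaces (rather than Alexandrov's theorem) for the a.e.\ single-valuedness.
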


The following argument is based on similar arguments made in \cite{KOT}, \cite{para}, and \cite{OK1}; specifically, the argument made for Theorem 13 in \cite{KOT}. Even though Theorem 13 in \cite{KOT} is incorrect\footnote{It should be noted that in \cite{KOT}, due to erroneous assumptions, there were issues with attempts to construct refractors in the special case explored by Theorems 12 and 13; see Section \ref{app:wrongKOT}.}, the type of argument presented in its proof is broadly applicable; thus it would be good to put the argument in a correct context. 

\begin{proof}
The following argument follows the proof of Theorem 13 in \cite{KOT} very closely with some adjustments to fit into this new context. For the sake of the following proof, recall that our definition of the energy function can also be considered as a measure of $T$.

If $\mu_g(D_T^{\delta_\gamma})=0$, then any refractor $R \in \mathcal{R}_{convex}^{\epsilon_M} (T)$ will do. Assume that $\mu_g(D_T^{\delta_\gamma})>0$.

Since $T$ is bounded, for any $\delta > 0$ there exists an $N\in \mathbb{N}$ such that for each $k\ge N$ there exists a partition of $T$ into $k$ Borel sets $\omega_1^k,\dots,\omega_k^k$ such that
\begin{equation}
    \textnormal{diam}(\omega_i^k)\le \delta \textnormal{ for any } k \ge N, i\in[k].
\end{equation}
For each $k \in \mathbb{N}$, we choose an $x_i^k\in \omega_i^k$ for $i \in [k]$, and put
\begin{equation}
    F_i^k=F(\omega_i^k).
\end{equation}
Define a measure $F^k$ on $T$ by
\begin{equation}
    F^k(\omega)=\sum_{x_i^k\in\omega}F_i^k \textnormal{ for any Borel set } \omega \subseteq T.
\end{equation}
Note that $F^k$ converges weakly to $F$ as $k \to \infty.$ For each $k,$ there exists a nonempty $S\subseteq[k]$ such that $F_i^k>0$ for all $i\in S$. Since $\{x_i^k\}_{i\in S}$ and $\{F_i^k\}_{i\in S}$ satisfies the assumptions of Theorem \ref{convRefr}, there exists a convex refractor $R^k\in \mathcal{R}_{convex}^{\epsilon_M}(\{x_i^k|i\in S\})\subseteq\mathcal{R}_{convex}^{\epsilon_M}(\{x_i^k|i\in [k]\})\subseteq \mathcal{R}_{convex}^{\epsilon_M}(T)$ defined by hyperboloids with an eccentricity greater than or equal to some $\epsilon_M>\epsilon_0+\gamma$ such that
\begin{equation}
    G_{convex}(R^k;x_i^k ) = F_i^k \textnormal{ for } i \in[k].
\end{equation}

Let $G^k$ be the measure on $T$ defined by
\begin{equation}
    G^k(\omega) =\sum_{x_i^k\in \omega}G_{convex}(R^k;x_i^k ) 
\end{equation}
then obviously $F^k \equiv G^k$ for all $k\in \mathbb{N}$ and consequently, $G^k \to F$. To finish the
proof we need to construct a refractor $R$ whose energy function, $G$, would be the limit of measures $G^k$. This refractor is constructed in the following manner as a limit of refractors $R^k$.

First, we note that since $g \equiv 0$ outside $D_T^{\delta_\gamma}$, we only need to consider the part of the refractor $R^k\cap C_{\origin,D_T^{\delta_\gamma},\infty}$. See Definition \ref{cone-def} as a refresher on the meaning of $C_{\origin,D_T^{\delta_\gamma},\infty}$. Also for some $\epsilon_M > \epsilon_0+\gamma$ one can show that for any $R \in \mathcal{R}^{\epsilon_M} (T )$
\begin{equation}\label{incl}
    \left(R\cap C_{\origin,D_T^{\delta_\gamma},\infty}\right) \subseteq \textnormal{B}(\origin, b) \textnormal{ for some } b >0
\end{equation}
where $\textnormal{B}(\origin, b)$ is the open ball centered at the origin $\origin$ of radius $b$. Let us prove this statement. We define
\begin{equation}
    b = \max_{x\in T,m\in \overline{D_T}^{\delta_\gamma}}\h_{x,\epsilon_M}(m).
\end{equation}

Since $\h_{x,\epsilon_M}$ is a continuous function and $\overline{D_T}^{\delta_\gamma}$ is compact, this definition is correct and $b < \infty.$ Thus for any $\epsilon > \epsilon_M$, $\h_{x,\epsilon}(m)\le b$ and (\ref{incl}) is proved.
 
For each of the refractors $R^k$ we consider a bounded convex body
\begin{equation}
    h_b^k =h^k\cap C_{\origin,D_T^{\delta_\gamma},\infty}\cap\textnormal{B}(\origin, b)
\end{equation}
where for each $k\in \mathbb{N}$ the set $h^k$ is defined by (\ref{refrdef3}). By Blaschke’s selection theorem \cite{schneider_2013}, there exists a subsequence of $\{h_b^k\}$ which we again denote by $\{h_b^k\}$, which converges to some convex body $h_b.$

We show now that for each point $r \in [\partial( h_b) \cap C_{\origin,D_T^{\delta_\gamma},\infty}]\setminus \partial(\textnormal{B}(\origin, b))$ there exists a
hyperboloid $H^r(x)$ which is supporting to $h^b$ at point $r.$

Let $r \in [\partial( h_b) \cap C_{\origin,D_T^{\delta_\gamma},\infty}]\setminus \partial(\textnormal{B}(\origin, b)).$ Then there exists a sequence $\{r_k\}$ that converges to $r$ where each $r_k \in R^k$. Let $H(x_k)$ be a supporting hyperboloid to $R^k$ at $r_k$. Since $T$ is compact, $\{x_k\}$ contains a subsequence, which we will denote by $\{x_k^*\}$, converging to some $x \in T$. The convex body $\tilde{H} ( x_k^*)$ bounded by $H (x_k^*)$ contains the body $h_b^k$. The corresponding sequence $\{\tilde{H} (x_k^*)\}$ converges to the body $\tilde{H}^r(x)$ containing $h_b$ and $h_b^k$ converges to $h_b$. Therefore $\tilde{H}^r(x)$ contains $\partial (h_b)$. It follows that $H^r(x)$ is supporting to $h_b$ at $r$.

We now define the refractor $R = \partial(\bigcap_{x\in T}\tilde{H}^r(x))$ and show that the sequence of measures $G^k$, that are equivalent to the energy functions corresponding to the refractors $R^k$, converges weakly to the measure $G$, which is the energy function of the refractor $R$.

Let $\alpha_{convex}^k$ and $\alpha_{convex}$ be the refractor maps corresponding to $R^k$ and $R$ respectively. By a theorem of Reidemeister on singularities on convex surfaces (see \cite{schneider_2013}) the refractor maps $\alpha_{convex}^k$ for $k \in \mathbb{N}$ and $\alpha_{convex}$ are single-valued functions almost everywhere. Furthermore, for almost all $m\in D_T^{\delta_\gamma}$ the hyperboloids $H_k$ supporting to $R^k$ at points $r_k(m)$ converge to the hyperboloid $H$ supporting to $R$ at the point $r(m)$. Thus, $\alpha_{convex}^k(m)$ converges to $\alpha_{convex}(m)$ almost everywhere. 

If given a set of cardinality one, $\{z\}$, let $\textnormal{Ele}(\{z\})=z$. Let $Y^k(m)=\{x\in \alpha_{convex}^k(m)|x\in \{x_i^k\}_{i\in [k]}\}$ and let $J^k(m)\subseteq [k]$ be the set of indices such that $\{x_i^k|i\in J^k(m)\}=Y^k(m)$.
Let $z\in T$,
\begin{equation}
K^k(m)= x_{\min J^k(m)}^k,
\end{equation}
and 
\begin{equation}
K(m)= \begin{cases}
        \textnormal{Ele}(\alpha_{convex}(m)) & \text{if } |\alpha_{convex}(m)|=1\\
        z & \text{if } |\alpha_{convex}(m)|>1
    \end{cases}.
\end{equation}

Then for any continuous function $u$ on $T$ we have
\begin{equation}
    \int_T u dG^k=\int_{D_T^{\delta_\gamma}}u(K^k(m))d\mu_{g}(m)\longrightarrow \int_{D_T^{\delta_\gamma}}u(K(m))d\mu_{g}(m)=\int_T u dG
\end{equation}
as $k \to \infty$, that is, the measures $\{G^k\}$ converge weakly to $G$.
\end{proof}

\section{Rotationally Symmetric Convex Refractors on the Surface of a Right Circular Cone}

Both Theorems \ref{continuous} and \ref{line-conv} inspire this next result. 

\begin{coro}\label{line-conv-continuous}
Let $T$ be a closed set that satisfies Hypothesis H1 and $\min_{x,y\in T}\langle k_x,k_y\rangle=1$. Assume that we are given $\gamma>0$ such that $\epsilon_0+\gamma<\lim_{t\to K^+}\frac{1}{t-1}$ for $K=\frac{\max_{x\in T} |x|}{\min_{x\in T} |x|}$ and $\epsilon_0$ is defined by (\ref{e0}). Also, assume we are given a nonnegative $g\in L^1(\mathbb{S}^2)$ such that $g\equiv 0$ outside $D_T^{\delta_\gamma}$ where $\delta_\gamma=\frac1{\epsilon_0+\gamma}$. Suppose we are given a nonnegative $f\in L^1(T)$ and a measure $F$ defined by (\ref{lebesgue}) such that
\begin{equation}
    F(T) = \mu_{g}(D_T^{\delta_\gamma}).
\end{equation}

Then there exists an $\epsilon_M\in(\epsilon_0+\gamma,\lim_{t\to K^+}\frac{1}{t-1})$ such that we can construct a convex rotationally symmetric refractor $R  \in \mathcal{R}_{convex}^{\epsilon_M} (T)$ where $R$ that is a convex weak solution to the refractor problem (\ref{weak3}).    
\end{coro}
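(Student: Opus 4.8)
The plan is to rerun the discretization-and-limit argument from the proof of Theorem~\ref{continuous} essentially verbatim, but to supply the \emph{rotationally symmetric} discrete existence result, Proposition~\ref{line-conv}, in place of Theorem~\ref{convRefr} at the approximation step. The reason one cannot simply quote Theorem~\ref{continuous} is that its eccentricities come from Theorem~\ref{convRefr}, which places no upper control on $\epsilon_M$; the present corollary insists on the sharp window $\epsilon_M\in(\epsilon_0+\gamma,\lim_{t\to K^+}\frac1{t-1})$, and that bound is exactly what Proposition~\ref{line-conv} delivers in the coaxial regime $\min_{x,y\in T}\langle k_x,k_y\rangle=1$. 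Rotational symmetry of the resulting $R$ is then automatic: since every $x\in T$ lies on the single axis $\R k$ (where $k$ is the common direction of the points), each $\tilde H_\epsilon(x)$ is a solid of revolution about that axis, and any intersection of such coaxial bodies --- in particular the limit refractor --- is itself a solid of revolution.

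Concretely, I would first dispose of the degenerate case $\mu_g(D_T^{\delta_\gamma})=0$, where $F\equiv 0$ and any element of $\mathcal{R}_{convex}^{\epsilon_M}(T)$ (automatically rotationally symmetric, by coaxiality) solves the problem. Assuming $\mu_g(D_T^{\delta_\gamma})>0$, partition the compact set $T$ into Borel pieces $\omega_1^k,\dots,\omega_k^k$ of diameter $\le\delta$, choose $x_i^k\in\omega_i^k$, put $F_i^k=F(\omega_i^k)$, and form the discrete measures $F^k\to F$ weakly, exactly as in Theorem~\ref{continuous}. Passing to the positive-mass index set $S=\{i:F_i^k>0\}$, I would apply Proposition~\ref{line-conv} to $\{x_i^k\}_{i\in S}$. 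This subset lies on the same ray, so $\min\langle k_x,k_y\rangle=1$ persists; it satisfies Hypothesis~H1 because $2\ell_S\ge 2\ell_T>L_T\ge L_S$; and the largest-norm point of $S$ carries positive mass for the trivial reason that \emph{every} index of $S$ does, which is precisely the hypothesis $f_\ell>0$ that Proposition~\ref{line-conv} requires. This yields, for each $k$, a (coaxial, hence rotationally symmetric) refractor $R^k$ with $G_{convex}(R^k;x_i^k)=F_i^k$.

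The passage to the limit is then identical to Theorem~\ref{continuous}: the uniform enclosure~(\ref{incl}) furnishes a common ball $\textnormal{B}(\origin,b)$, Blaschke's selection theorem extracts a subsequence of the truncated bodies $h_b^k$ converging to a convex body $h_b$, supporting hyperboloids $H^r(x)$ exist at the boundary points of $h_b$ lying in $C_{\origin,D_T^{\delta_\gamma},\infty}$, and the refractor $R=\partial\bigl(\bigcap_{x\in T}\tilde H^r(x)\bigr)$ has energy function $G$ equal to the weak limit of the measures $G^k\equiv F^k$, whence $G=F$ and $R$ is a convex weak solution. Because the foci stay coaxial throughout, $R$ is rotationally symmetric and belongs to $\mathcal{R}_{convex}^{\epsilon_M}(T)$ for the chosen $\epsilon_M$.

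The real work, and the only genuine departure from Theorem~\ref{continuous}, is checking that Proposition~\ref{line-conv} applies to \emph{every} approximating subset with one fixed $\epsilon_M$ and a common aperture. Since $\epsilon_0$ does not increase under passage to subsets and $K_S=\max_S|x|/\min_S|x|\le K_T$, one may, for each subset $S$, pick $\gamma_S\ge\gamma$ with $\epsilon_{0,S}+\gamma_S=\epsilon_0+\gamma$; because the points are coaxial this fixes the aperture as the single cap $\Int(A_k^{1/(\epsilon_0+\gamma)})=D_T^{\delta_\gamma}$ shared by $T$ and every subset, while the admissible window $(\epsilon_0+\gamma,\lim_{t\to K_S^+}\frac1{t-1})$ contains $(\epsilon_0+\gamma,\lim_{t\to K^+}\frac1{t-1})$. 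Hence a single $\epsilon_M$ chosen in the latter interval is legal for every discrete problem, the support condition $g\equiv 0$ outside $D_T^{\delta_\gamma}$ is respected throughout, and the total-mass identity $\sum_{i\in S}F_i^k=F(T)=\mu_g(D_T^{\delta_\gamma})$ holds because zero-mass pieces contribute nothing. Getting this uniformity right, with the attendant bookkeeping, is the main obstacle; everything downstream is inherited from Theorem~\ref{continuous}.
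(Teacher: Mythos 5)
Your proposal is correct and is essentially the argument the paper intends: the paper states this corollary without a written proof, presenting it as the combination of Theorem~\ref{continuous}'s discretization-and-limit argument with Proposition~\ref{line-conv} substituted for Theorem~\ref{convRefr}, which is precisely what you carry out. Your extra bookkeeping --- fixing one $\epsilon_M$ valid for every coaxial subset (which requires reading Proposition~\ref{line-conv}'s proof, since its statement only asserts existence of \emph{some} $\epsilon_M$ in the window), noting the aperture is the common cap $D_T^{\delta_\gamma}$, and observing that intersections of coaxial solids of revolution stay rotationally symmetric --- only makes explicit details the paper leaves implicit.
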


We will use the above corollary to create rotationally symmetric refractors with the target set on a right circular cone.

\begin{Def}\label{circlinedef}
Let $k\ge 2$, $d>0$, $1>\xi>0$, and $m_*,m'\in \sphere^2$ such that $\langle m_*,m'\rangle=0$. We create a Cartesian coordinate system centered at $\origin$ where $m_*$ is the direction of our $z$-axis, $m'$ is the direction of our $x$-axis, and $m_*\times m'$ is the direction of our $y$-axis. Let $(x,y,z)'$ represent a point in this system. 

Recall that, given a point $(x,y,z)'\in \R^3$, there exists $r \in [0, \infty)$, $\phi \in [0, \pi]$, $\theta \in [0, 2\pi)$, such that
\begin{align}
    x&=r\cos\theta \sin\phi\\
    y&=r\sin\theta\sin\phi\\
    z&=r\cos\phi.
\end{align}

Let $Q$ be a closed subset of the interval $(0,\infty)$. Define the set of points $T_{k,Q}^\xi(m_*,m')$ as
\begin{equation}
    \left\{\left(d\cos\left(\frac{2\pi j}k\right)\sin\left(\arccos(\xi)\right),d\sin\left(\frac{2\pi j}k\right)\sin\left(\arccos(\xi)\right),d\xi\right)'|j\in I, d\in Q\right\}
\end{equation}
where $I=\{0,1,\dots,k-1\}.$

Define the set $T_{\infty,Q}^\xi(m_*,m')$ as
\begin{equation}
    \left\{\left(d\cos\left(\theta\right)\sin\left(\arccos(\xi)\right),d\sin\left(\theta\right)\sin\left(\arccos(\xi)\right),d\xi\right)'|\theta\in [0,2\pi), d\in Q\right\}.
\end{equation}
\end{Def}

\begin{prop}\label{cov-poly}
    Let $m_*,m'\in \sphere^2$ such that $\langle m_*,m'\rangle=0$. Let $Q$ be a closed subset of the interval $(0,\infty)$, $k\ge 2$, and $1>\xi>0$ such that $T=T_{k,Q}^\xi(m_*,m')$ satisfies Hypothesis H1 such that $\epsilon_0<\lim_{t\to K^+}\frac{1}{t-1}$ for $K=\frac{\max_{x\in T} |x|}{\min_{x\in T} |x|}$ where $\epsilon_0$ is defined by (\ref{e0}).  Assume that we are given $\gamma>0$ such that $\epsilon_0+\gamma<\lim_{t\to K^+}\frac{1}{t-1}$. Also, assume we are given a nonnegative $g_*\in L^1(\mathbb{S}^2)$ that is rotationally symmetric about the axis defined by the ray of direction $m_*$ originating at $\origin.$ Let the function $g\in L^1(\mathbb{S}^2)$ be defined as $g\equiv g_*$ inside $D_T^{\delta_\gamma}$ and $g\equiv 0$ outside $D_T^{\delta_\gamma}$ where $\delta_\gamma=\frac1{\epsilon_0+\gamma}$. 

    Suppose we are given a nonnegative $f\in L^1(T)$ such that for every $d\in Q$:
    \begin{equation}
        f\left(\left(d\cos\left(\frac{2\pi j}k\right)\sin\left(\arccos(\xi)\right),d\sin\left(\frac{2\pi j}k\right)\sin\left(\arccos(\xi)\right),d\xi\right)'\right)
    \end{equation}
    is constant for all $j\in \{0,1,\dots,k-1\}$. Let $F$ be the measure defined by (\ref{lebesgue}) and
    \begin{equation}
    F(T) = \mu_{g}(D_T^{\delta_\gamma}).
\end{equation}

Then there exists an $\epsilon_M\in(\epsilon_0+\gamma,\lim_{t\to K^+}\frac{1}{t-1})$ such that we can construct a convex refractor $R  \in \mathcal{R}_{convex}^{\epsilon_M} (T)$ where $R$ that is a convex weak solution to the refractor problem (\ref{weak3}).    
\end{prop}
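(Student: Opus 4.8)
The plan is to exploit the $k$-fold rotational symmetry of the data and reduce the problem to a single collinear spoke, where Corollary \ref{line-conv-continuous} applies. Write $G_k$ for the cyclic group of rotations by multiples of $2\pi/k$ about the axis $m_*$, and write $x_{j,d}$ for the point of $T=T_{k,Q}^\xi(m_*,m')$ on the spoke of index $j$ at radial parameter $d\in Q$. By Definition \ref{circlinedef} the target $T$ is $G_k$-invariant; the source $g$ (equal to the rotationally symmetric $g_*$ on $D_T^{\delta_\gamma}$ and zero elsewhere) is $G_k$-invariant; and $f$ depends only on $d$, so the measure $F$ is $G_k$-invariant. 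The aperture $D_T^{\delta_\gamma}=\Int\!\left(\bigcap_{x\in T}A_x^{\delta_\gamma}\right)$ is likewise a $G_k$-invariant neighbourhood of $m_*$. I would therefore seek a $G_k$-invariant refractor, i.e. one whose defining eccentricities depend only on $d$ and are common to all $k$ spokes.

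Fix the spoke $T_1=\{d\,m_1:d\in Q\}$, where $m_1$ is the direction of the $j=0$ spoke; this is collinear, so $\min_{x,y\in T_1}\langle k_x,k_y\rangle=1$ and Corollary \ref{line-conv-continuous} applies. Let $\Sigma_0\subseteq D_T^{\delta_\gamma}$ be a fundamental domain for $G_k$ (so $D_T^{\delta_\gamma}=\bigsqcup_{j}G_k^{\,j}\Sigma_0$ up to a null set); since $\Sigma_0\subseteq D_T^{\delta_\gamma}\subseteq A_{m_1}^{\delta_\gamma}$ we may take $\Sigma_0$ inside the cap $D_{T_1}^{\delta_\gamma}=\Int(A_{m_1}^{\delta_\gamma})$. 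I then apply Corollary \ref{line-conv-continuous} to the collinear target $T_1$ with radial density $f_1(d)=f(d\,m_1)$ and source $g_1=g\,\mathbf{1}_{\Sigma_0}$. The mass balance $\mu_{g_1}(D_{T_1}^{\delta_\gamma})=\mu_g(\Sigma_0)=\tfrac1k\mu_g(D_T^{\delta_\gamma})=\tfrac1k F(T)=F_1(T_1)$ holds by the $G_k$-invariance of $\mu_g$ and $F$. This produces a rotationally symmetric refractor $R_1$ about $m_1$ with eccentricity profile $\epsilon(d)\in[\epsilon_M,\infty)$ and $\epsilon_M\in\bigl(\epsilon_0+\gamma,\lim_{t\to K^+}\tfrac1{t-1}\bigr)$, which is precisely the range claimed, since $K=\max_{x\in T}|x|/\min_{x\in T}|x|$ is the same for $T_1$ and for $T$.

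Next I replicate this profile across all spokes and set $R=\partial\!\left(\bigcap_{j,d}\tilde H_{\epsilon(d)}(x_{j,d})\right)\in\mathcal{R}_{convex}^{\epsilon_M}(T)$. By construction $R$ is $G_k$-invariant, so its refractor map is $G_k$-equivariant and, for each $d$, $V_{convex}(R;\{x_{j,d}\})=G_k^{\,j}V_{convex}(R;\{x_{0,d}\})$; since $\mu_g$ is $G_k$-invariant this already forces $G_{convex}(R;x_{j,d})$ to be independent of $j$, matching the $j$-independence of $f$. It remains to verify the radial distribution $G_{convex}(R;x_{0,d})=f_1(d)$. For this one must show that on $\Sigma_0$ the supremum $\sup_{j,d}\h_{x_{j,d},\epsilon(d)}$ defining $R$ in (\ref{equRconvex}) is attained on the spoke $j=0$, so that $R$ coincides with $R_1$ over $\Sigma_0$ and $V_{convex}(R;\{x_{0,d}\})\subseteq\Sigma_0$; combined with $g=g_1$ on $\Sigma_0$ and $g_1\equiv0$ off $\Sigma_0$, this yields $G_{convex}(R;x_{0,d})=\mu_{g_1}(V_{convex}(R_1;\{x_{0,d}\}))=f_1(d)$.

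The hard part is exactly this domination statement, and it is where the geometric hypotheses enter. The bound $\epsilon_0<\lim_{t\to K^+}\tfrac1{t-1}=\tfrac1{K-1}$ (and the induced restriction $\epsilon_M<\tfrac1{K-1}$) keeps every hyperboloid sufficiently pointed that its polar radius $\h_{x,\epsilon}$ cannot overtake, on the sector $\Sigma_0$, the contribution of the spoke that $\Sigma_0$ faces; this is the symmetric analogue of the separating inequalities of the form $P_{Tmax}<\h_{x_\ell,\epsilon_{max}}<\h_{Tmax,\epsilon_{min}}$ used in the proofs of Proposition \ref{line-conv} and Theorem \ref{convRefr}. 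A genuine subtlety is the apparent circularity: the set $\Sigma_0$ on which I restrict $g$ is itself the domination region of spoke $0$, which depends on the profile $\epsilon(\cdot)$ returned by the corollary. I would break this either by taking $\Sigma_0$ to be a fixed geometric wedge about $m_*$ and proving, from the eccentricity bound, that spoke $0$ dominates throughout it (so the two descriptions of $\Sigma_0$ agree), or by a continuity argument in the profile. Finally, should the pointwise domination prove awkward for a general closed $Q$, I would instead solve the finite $k$-fold symmetric problems $T_{k,Q_n}^\xi$ for a radial discretization $Q_n$ (using Proposition \ref{line-conv} in place of Corollary \ref{line-conv-continuous}) and pass to the limit via the Blaschke-selection and weak-convergence scheme of Theorem \ref{continuous}, observing that $G_k$-invariance is preserved in the limit.
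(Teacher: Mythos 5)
Your overall architecture is the same as the paper's: partition the aperture into $k$ congruent azimuthal wedges, solve the collinear problem on each spoke via Corollary \ref{line-conv-continuous} with the source restricted to the wedge assigned to that spoke, replicate a single eccentricity profile over all spokes, and assemble $R=\partial\bigl(\bigcap_{j,d}\tilde H_{\epsilon(d)}(x_{j,d})\bigr)$, reducing everything to a domination statement over the wedges. The gap is that the domination statement you plan to prove is false as you have oriented it. Since $R$ is the boundary of an intersection of convex bodies, its polar radius is the \emph{supremum} $\sup_{j,d}\h_{x_{j,d},\epsilon(d)}$, and each $\h_{x,\epsilon}$ (hence each spoke refractor $R_j$, rotationally symmetric about $m^j$) has polar radius strictly \emph{increasing} in the angle from its own axis. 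Consequently, with congruent profiles, at a direction $m$ the supremum is attained by the spoke whose axis makes the \emph{largest} angle with $m$: spoke $0$ dominates on the wedge of azimuths \emph{antipodal} to $m^0$, not on the sector that ``faces'' it. This is precisely how the paper chooses its wedges: $B_j$ is built from arcs joining $m_*$ to $\partial(D_T^{\delta_\gamma})\cap\partial(A_x^{\delta_\gamma})$ for $x\in T_j$, and that piece of the aperture boundary lies at azimuths opposite to $m^j$, because the cap constraint centered at $m^j$ binds on the far side of the aperture. If instead you calibrate the collinear solution for spoke $0$ with the source $g\,\mathbf{1}_{\Sigma_0}$ on the facing wedge, the assembled refractor sends spoke $0$ the light coming from the \emph{opposite} wedge, and the energy balance fails set by set: the visibility sets of $R_0$ are rings centered at the tilted axis $m^0$, and $\mu_g(\mathrm{ring}\cap\Sigma_0)\neq\mu_g(\mathrm{ring}\cap(\text{opposite wedge}))$ in general (a small ring about $m^0$ lies entirely in the facing wedge and misses the opposite one), since $g$ is symmetric about $m_*$, not about $m^0$.

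Once the wedge is oriented correctly, both difficulties you flag evaporate. The domination needs no eccentricity bound and no continuity or fixed-point argument in the profile: it is immediate from monotonicity of the polar radius in the angle plus congruence of the $k$ profiles, which you already ensure by replicating one profile; the bound $\epsilon_M<\lim_{t\to K^+}\frac{1}{t-1}$ is needed only inside Corollary \ref{line-conv-continuous} for the collinear sub-problems. Your perceived circularity is resolved by your own first suggestion --- fix the wedges as purely geometric sets, exactly as the paper does with $B_j$ --- but with the antipodal, not the facing, assignment. Your fallback (discretize $Q$ and pass to the limit in the manner of Theorem \ref{continuous}) is then unnecessary, and as sketched it would inherit the same orientation error at the finite stage.
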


\begin{proof}
 We create a Cartesian coordinate system centered at $\origin$ where $m_*$ is the direction of our $z$-axis, $m'$ is the direction of our $x$-axis, and $m_*\times m'$ is the direction of our $y$-axis. Let $(x,y,z)'$ represent a point in this system. 

Recall that, given a point $(x,y,z)'\in \R^3$, there exists $r \in [0, \infty)$, $\phi \in [0, \pi]$, $\theta \in [0, 2\pi)$, such that
\begin{align}
    x&=r\cos\theta \sin\phi\\
    y&=r\sin\theta\sin\phi\\
    z&=r\cos\phi.
\end{align}

Let $T_j= \left\{\left(d\cos\left(\frac{2\pi j}k\right)\sin\left(\arccos(\xi)\right),d\sin\left(\frac{2\pi j}k\right)\sin\left(\arccos(\xi)\right),d\xi\right)'|d\in Q\right\}$. Note that $T=\bigcup_{i\in \{0,1,\dots,k-1\}} T_i.$

    Let $m_1,m_2\in\sphere^2$ such that $\langle m_1,m_2\rangle >-1$ and $\mathscr{L}(m_1,m_2)$ be the shortest arc on $\sphere^2$ between the points $m_1$ and $m_2$. For all $j\in \{0,1,\dots,k-1\}$, let $B_j\subset \overline{D_T^{\delta_\gamma}}$ be the set $\{t\in\mathscr{L}(m_*,y)|y\in \partial(D_T^{\delta_\gamma})\cap\partial(A_x^{\delta_\gamma})\}$ where $x\in T_j$. For $d\in Q$, since $T_{k,\{d\}}^\xi(m_*,m')$ defines the points of a regular $k$-gon centered at the axis defined by the ray of direction $m_*$ originating at $\origin,$ then $\mu_g(B_j)=\frac{\mu_g(D_T^{\delta_\gamma})}k$ for all $j$. For all $j\in \{0,1,\dots,k-1\}$, let $g_j$ be a function over $\sphere^2$ such that $g_j\equiv g$ inside $\Int(B_j)$ and $g_j\equiv 0$ outside $\Int(B_j)$. Note that $\mu_g(\Int(B_j))=\mu_{g_j}(D_{T_j}^{\delta_\gamma})$ for all $j$.
    
    Let $m^j\in \sphere^2$ be the unit vector such that a ray originating from $\origin$ of direction $m^j$ intersects every point in $T_j.$ Also, let $f_j$ be the restriction of the function $f$ to the set $T_j$ and $F_j$ be the corresponding measure as defined by (\ref{lebesgue}). By Proposition \ref{line-conv-continuous}, for all $j\in \{0,1,\dots,k-1\}$, there exists a refractor $R_j=\partial\left(\bigcap_{d\in Q} \tilde{H}_{\epsilon_d}(dm^j)\right) \in\mathcal{R}_{convex}^{\epsilon_M} (T_j)$, that is rotationally symmetric about the axis defined by the ray starting at $\origin$ with direction $m^j$, such that $R_j$ that is a convex weak solution to the refractor problem (\ref{weak3}); i.e. where $F_j(\omega) = \mu_{g_j}(V(R_j; \omega))$ for all Borel $\omega\subseteq T$. 
    
    Since for $x\in R_j$, we have that $|x|$ increases as $\langle k_x,m^j\rangle$ decreases for all $j\in \{0,1,\dots,k-1\}$, then for $j\in \{0,1,\dots,k-1\}$, defining $r_j(m)$ as the point of intersection between $R_j$ and the ray originating from $\origin$ in direction $m$, we have $|r_j(m)|=\max_{i\in \{0,1,\dots,k-1\}}|r_i(m)|$ for all $m\in B_j$.

Thus
\begin{equation}
    \partial\left(\bigcap_{j\in\{0,\dots,k-1\}}\left(\bigcap_{d\in Q} \tilde{H}_{\epsilon_d}(dm^j)\right)\right) \in\mathcal{R}_{convex}^{\epsilon_M} (T)
\end{equation}
is our refractor. 
\end{proof}

With an argument similar to that we use in the proof of Theorem \ref{continuous}, we obtain the following result from Proposition \ref{cov-poly}. 

\begin{theorem}\label{cov-rot}
    Let $m_*,m'\in \sphere^2$ such that $\langle m_*,m'\rangle=0$. Let $1>\xi>0$ and $Q$ be a closed subset of the interval $(0,\infty)$ such that $T=T_{\infty,Q}^\xi(m_*,m')$ satisfies Hypothesis H1 where $\epsilon_0<\lim_{t\to K^+}\frac{1}{t-1}$ for $K=\frac{\max_{x\in T} |x|}{\min_{x\in T} |x|}$ and $\epsilon_0$ is defined by (\ref{e0}). Assume that we are given $\gamma>0$ such that $\epsilon_0+\gamma<\lim_{t\to K^+}\frac{1}{t-1}$. Also, assume we are given a nonnegative $g\in L^1(\mathbb{S}^2)$ that is rotationally symmetric about the axis defined by the ray of direction $m_*$ originating at $\origin$ such that $g\equiv 0$ outside $D_T^{\delta_\gamma}$ where $\delta_\gamma=\frac1{\epsilon_0+\gamma}$. 
    
   Assume we have a nonnegative $f\in L^1(T)$ such that for every $d\in Q$:
    \begin{equation}
f\left(\left(d\cos\left(\theta\right)\sin\left(\arccos(\xi)\right),d\sin\left(\theta\right)\sin\left(\arccos(\xi)\right),d\xi\right)'\right)
    \end{equation}
    is constant for all $\theta\in [0,2\pi)$. Let $F$ be the measure defined by (\ref{lebesgue}) and
    \begin{equation}
    F(T) = \mu_{g}(D_T^{\delta_\gamma}).
\end{equation}

Then there exists an $\epsilon_M\in(\epsilon_0+\gamma,\lim_{t\to K^+}\frac{1}{t-1})$ such that we can construct a convex, rotationally symmetric refractor $R  \in \mathcal{R}_{convex}^{\epsilon_M} (T)$ where $R$ that is a convex weak solution to the refractor problem (\ref{weak3}).    
\end{theorem}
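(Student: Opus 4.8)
The plan is to obtain $R$ for $T=T_{\infty,Q}^\xi(m_*,m')$ as an \emph{angular} limit of the discrete-in-angle refractors furnished by Proposition~\ref{cov-poly}, running the same Blaschke-selection and weak-convergence scheme as in the proof of Theorem~\ref{continuous}, and arranging the approximation so that the limit inherits full rotational symmetry. For each integer $k\ge 2$ set $T_k:=T_{k,Q}^\xi(m_*,m')\subseteq T$; these lie on the same cone (half-angle $\arccos\xi$) with the same radial set $Q$, so $\ell$, $L$, and $K$ are unchanged, while $c_k=\min_{x,y\in T_k}\langle k_x,k_y\rangle$ satisfies $c_k\ge c$ and $c_k\to c$. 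Since $\epsilon_0$ is decreasing in $c$, the corresponding constants obey $\epsilon_0^k\le\epsilon_0$ and $\epsilon_0^k\to\epsilon_0$.

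To keep a common aperture I would apply Proposition~\ref{cov-poly} to $T_k$ with $\gamma^k:=\gamma+(\epsilon_0-\epsilon_0^k)\ge\gamma$, so that $\epsilon_0^k+\gamma^k=\epsilon_0+\gamma$ and hence $\delta_{\gamma^k}=\delta_\gamma$; then $D_{T_k}^{\delta_\gamma}\supseteq D_T^{\delta_\gamma}\supseteq\{g>0\}$ and $\mu_g(D_{T_k}^{\delta_\gamma})=\mu_g(D_T^{\delta_\gamma})=F(T)$. Because this normalization leaves the admissible interval $(\epsilon_0+\gamma,\lim_{t\to K^+}\tfrac1{t-1})$ independent of $k$, a single $\epsilon_M$ in it serves for all $k$. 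I would build the discrete data by angular sectoring: partition the cone $T$ into $k$ congruent wedges $S_0,\dots,S_{k-1}$ about the axis and let $F^k$ be, on each radial curve $T_j\subset T_k$, the push-forward of $F|_{S_j}$ (collapsing the angular variable). Since $f$ is constant in $\theta$, every wedge carries mass $F(T)/k$ with identical radial profile, so $F^k$ meets the constancy hypothesis of Proposition~\ref{cov-poly} and the balance $F^k(T_k)=F(T)=\mu_g(D_{T_k}^{\delta_\gamma})$; moreover $F^k\rightharpoonup F$ as $k\to\infty$ because $\operatorname{diam}(S_j)\to 0$ and any continuous $u$ is uniformly continuous on the compact $T$. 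Proposition~\ref{cov-poly} then yields a convex weak solution $R^k\in\mathcal{R}_{convex}^{\epsilon_M}(T_k)\subseteq\mathcal{R}_{convex}^{\epsilon_M}(T)$ whose body $\bigcap_j(\bigcap_{d}\tilde H_{\epsilon_d}(dm^j))$ is invariant under the cyclic group $C_k$ of rotations of order $k$ about the $m_*$-axis.

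Next I would pass to the limit exactly as in Theorem~\ref{continuous}. The bound $b=\max_{x\in T,\,m\in\overline{D_T^{\delta_\gamma}}}\h_{x,\epsilon_M}(m)<\infty$ is $k$-independent, so the convex bodies $h_b^k=h^k\cap C_{\origin,D_T^{\delta_\gamma},\infty}\cap\mathrm{B}(\origin,b)$ are uniformly bounded. To preserve symmetry I would restrict to a divisibility chain of indices, e.g.\ $k_n=n!$, so that $C_{k_1}\subseteq C_{k_2}\subseteq\cdots$ and $\overline{\bigcup_n C_{k_n}}$ is the whole rotation group about the $m_*$-axis; Blaschke's selection theorem then extracts a subsequence with $h_b^{k_n}\to h_b$. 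The supporting-hyperboloid argument of Theorem~\ref{continuous} (limits of hyperboloids supporting $R^{k_n}$ at converging boundary points remain supporting to $h_b$, using compactness of $T$) produces $R=\partial(\bigcap_{x\in T}\tilde H^r(x))$, and the Reidemeister refractor-map argument shows the energy measures $G^{k_n}$ converge weakly to the energy measure $G$ of $R$. As $F^{k_n}\equiv G^{k_n}$ and $F^{k_n}\rightharpoonup F$, uniqueness of weak limits gives $G=F$, i.e.\ $R$ is a convex weak solution of~(\ref{weak3}). Rotational symmetry of $R$ is then automatic: for each fixed $\rho\in C_{k_N}$ we have $\rho(h_b^{k_n})=h_b^{k_n}$ for all $n\ge N$, whence $\rho(h_b)=h_b$; as this holds for every $\rho$ in the dense subgroup $\bigcup_n C_{k_n}$, continuity of the action forces $h_b$, and therefore $R$, to be invariant under all rotations about the $m_*$-axis.

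The main obstacle is the energy-identification step, namely showing $G^{k_n}\rightharpoonup G$ and hence $F=G$: one must verify that the refractor maps $\alpha_{convex}^{k_n}$ converge almost everywhere to $\alpha_{convex}$ and that no energy escapes in the limit, which is the delicate core of the Theorem~\ref{continuous} argument and must be re-examined in the present two-dimensional-target setting. The symmetry bookkeeping — selecting the divisibility chain so that one Hausdorff limit is simultaneously invariant under all the $C_{k_n}$ — is the genuinely new ingredient beyond Theorem~\ref{continuous}, but is routine once in place; the parameter normalization via $\gamma^k$ is immediate from the monotonicity of $\epsilon_0$ in $c$.
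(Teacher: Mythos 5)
Your proposal is correct and takes essentially the same route as the paper: the paper's entire proof of Theorem \ref{cov-rot} is the remark that it follows from Proposition \ref{cov-poly} by repeating the approximation-and-limit argument (Blaschke selection plus weak convergence of the energy measures) used for Theorem \ref{continuous}, which is precisely your scheme of applying Proposition \ref{cov-poly} to $T_{k,Q}^\xi(m_*,m')$ and letting $k\to\infty$. Your additional bookkeeping --- the normalization $\gamma^k$ keeping the aperture fixed, the angular sectoring of $F$, and the divisibility chain $k_n=n!$ forcing the Hausdorff limit to be invariant under a dense subgroup of rotations about the $m_*$-axis --- supplies details the paper leaves implicit, notably the justification that the limiting refractor is in fact rotationally symmetric.
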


\section{No Set of Points Satisfies Both Hypotheses H1 and H2}\label{app:wrongKOT}

In the paper of Kochengin et al. \cite{KOT}, they specify two assumptions with regards to the target set $T$ that they call Hypothesis H1 and Hypothesis H2. In Lemma 11, Theorem 12, and Theorem 13, they prove key results with the assumption that both hypothesis hold for $T$. In this section we will prove that Hypotheses H1 and H2 are inherently contradictory. We start by restating some key definitions from \cite{KOT} and proceed with the proof.  

Let $k_x=\frac{x}{|x|}$. We are given a set of points $T$ in $\R^3\setminus\{\origin\}$, where $c=\text{min}_{x,y\in T} \langle k_x,k_y\rangle$, $\ell=\text{min}_{x\in T} |x|$, and $L=\text{max}_{x\in T} |x|$.

We also use the definition of $\epsilon_0$ provided in Lemma 2 of \cite{KOT}:

\begin{equation}\label{KOT:e0}
    \epsilon_0 = \frac{\ell+\sqrt{\ell^2 -2L\ell c+L^2}}{2\ell c-L}.
\end{equation}

In \cite{KOT} we have Hypothesis H1 which is as follows.

\begin{h1*}
$T$ is a compact subset of $\R^3$ contained in a half space of $\R^3$, $\ell>0$, and $2\ell c>L.$
\end{h1*}

Assume that $T$ satisfies Hypothesis H1. By definition $L\ge \ell>0,$ therefore we can write $L=\ell(1+\delta)$ where $\delta \ge0$. We can also observe that $L>0$ implies that $2\ell c>0$. Thus $c>0.$ Observe that $c$ is the cosine of the largest angle between two points in $T$, then $1\ge c$ since cosine is bounded above by $1$.

Copying directly from \cite{KOT} we present Hypothesis H2 as follows.

\begin{h2*}
We say that $T$ satisfies hypothesis H2 if 
\begin{enumerate}
    \item inequalities (22)-(24) in \cite{KOT} hold for $T$,
    \item for some number $\gamma '>0$ condition (28) in \cite{KOT} is satisfied
\end{enumerate}
\end{h2*}

As the title reveals, we prove that no set of points $T$ satisfies both Hypotheses H1 and H2. The inequalities I will focus on are (22) and (23), namely 

\begin{equation}\label{KOT:22}
    2\ell -L\epsilon_0>0\tag{22}
\end{equation}

and 

\begin{equation}\label{KOT:23}
    \epsilon_0>\frac{\ell \epsilon_0 +\sqrt{\ell^2\epsilon_0^2-2\ell L \epsilon_0 + L^2 \epsilon_0^2}}{2\ell -L\epsilon_0}.\tag{23}
\end{equation}

The central idea of our main proof is showing that these two inequalities contradict each other when given Hypothesis H1.

However, we first need to prove the following claim.

\begin{claim}
Let a set of points $T$ satisfy Hypothesis H1, then $\epsilon_0\ge1.$
\end{claim}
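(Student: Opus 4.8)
The plan is to reduce the claim to a single polynomial inequality in $\ell$, $L$, and $c$ that follows directly from the constraints imposed by Hypothesis H1. First I would record the elementary consequences of H1 that are used throughout: since $\ell = \min_{x\in T}|x| \le \max_{x\in T}|x| = L$ and $2\ell c > L > 0$, the denominator $2\ell c - L$ appearing in (\ref{KOT:e0}) is strictly positive, and moreover $c > 0$; since $c$ is the minimum of inner products of unit vectors and $\langle k_x, k_x\rangle = 1$, we also have $c \le 1$. I would then check that the radicand is nonnegative, using $c \le 1$ to write $\ell^2 - 2L\ell c + L^2 \ge \ell^2 - 2L\ell + L^2 = (\ell - L)^2 \ge 0$, so that $\epsilon_0$ is a well-defined positive real number.

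Because the denominator $2\ell c - L$ is positive, the inequality $\epsilon_0 \ge 1$ is equivalent to comparing numerator and denominator, namely
\begin{equation*}
    \ell + \sqrt{\ell^2 - 2L\ell c + L^2} \ge 2\ell c - L.
\end{equation*}
I would isolate the square root to obtain $\sqrt{\ell^2 - 2L\ell c + L^2} \ge 2\ell c - L - \ell$. If the right-hand side is nonpositive, the inequality holds trivially since the left-hand side is nonnegative; otherwise both sides are nonnegative and I may square without reversing the inequality.

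The key step is the simplification after squaring. Expanding $(2\ell c - L - \ell)^2$ and cancelling the common $\ell^2 + L^2$ terms, the inequality collapses, after dividing by $2\ell > 0$, to
\begin{equation*}
    2\ell c^2 - 2\ell c - Lc + L \le 0,
\end{equation*}
which factors as $(c-1)(2\ell c - L) \le 0$. This last inequality is immediate from the facts already established: $c - 1 \le 0$ since $c \le 1$, and $2\ell c - L > 0$ by Hypothesis H1, so their product is nonpositive (with equality exactly when $c = 1$). This establishes $\epsilon_0 \ge 1$.

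I expect the main obstacle to be purely bookkeeping: carrying out the expansion of the square and verifying that the terms organize into the clean factored form $(c-1)(2\ell c - L)$. There is no conceptual difficulty once the problem is reduced to this algebraic identity; the only subtlety worth flagging is the case split on the sign of $2\ell c - L - \ell$ before squaring, which must be handled to justify that the squaring step is valid.
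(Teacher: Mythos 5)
Your proof is correct, but it takes a different route than the paper. The paper argues by contradiction: it normalizes by writing $L = \ell(1+\delta)$ with $\delta \ge 0$, assumes $\epsilon_0 < 1$, and multiplies through by the positive denominator to get $\sqrt{2(1-c)+2\delta(1-c)+\delta^2} < 2(c-1)-\delta$, which is immediately impossible since the left side is nonnegative and the right side is nonpositive --- no squaring is ever needed. Your proof is direct: you reduce $\epsilon_0 \ge 1$ to the comparison $\sqrt{\ell^2 - 2L\ell c + L^2} \ge 2\ell c - L - \ell$, split on the sign of the right-hand side, and in the nontrivial case square and factor the result as $(c-1)(2\ell c - L) \le 0$; I checked the expansion and the factorization, and both are right. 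One observation worth making: under H1 your ``trivial'' case always applies, since
\begin{equation*}
  2\ell c - L - \ell \;\le\; 2\ell - L - \ell \;=\; \ell - L \;\le\; 0
\end{equation*}
using $c \le 1$ and $L \ge \ell$ --- this sign fact is exactly the engine of the paper's contradiction. So your squaring-and-factoring branch is never actually invoked; it is harmless and correctly executed, but the whole argument could be shortened to the one-line display above followed by ``hence the square root dominates the right-hand side trivially.'' What your approach buys is a direct, self-contained verification with an explicit algebraic identity; what the paper's buys is brevity, at the cost of phrasing as a contradiction.
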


\begin{proof}
Assume to the contrary that there exists a $\ell,L$ and $c$ such that $\epsilon_0<1$ and Hyopthesis H1 is also satisfied. Recall that we can write $L=\ell(1+\delta)$ where $\delta\ge0$. Thus we can rewrite (\ref{KOT:e0}) as

\begin{align*}
    \epsilon_0 &= \frac{\ell+\sqrt{\ell^2 -2L\ell c+L^2}}{2\ell c-L}\\
    &=\frac{\ell+\sqrt{\ell^2 -2\ell^2(1+\delta) c+\ell^2(1+\delta)^2}}{2\ell c-\ell (1+\delta)}\\
    &=\frac{1+\sqrt{1 -2(1+\delta) c+(1+\delta)^2}}{2 c- (1+\delta)}.
\end{align*}

Thus we now have 

\begin{equation*}
    1>\frac{1+\sqrt{1 -2(1+\delta) c+(1+\delta)^2}}{2 c- (1+\delta)}
\end{equation*}

 Hypotheses H1 tells us that $2\ell c -L>0$. Thus $2 c- (1+\delta)$ is positive. We now have that
\begin{align*}
    2 c- (1+\delta) &> 1+\sqrt{1 -2(1+\delta) c+(1+\delta)^2} \\
    \Rightarrow 2(c-1) -\delta &>\sqrt{2(1-c)+2\delta(1-c)+\delta^2}.
\end{align*}
Since $1\ge c>0$, we have that $0\ge c-1.$ Since $\delta\ge 0$, we have that the LHS of the last inequality is non-positive and the RHS is non-negative. A contradiction since $\epsilon_0\ge 1.$
\end{proof}

Now for the main result.

\begin{theorem}
Let a set of points $T$ satisfy Hypothesis H1, then $T$ does not satisfy Hypothesis H2.
\end{theorem}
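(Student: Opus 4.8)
The plan is to argue by contradiction: suppose $T$ satisfies both Hypothesis H1 and Hypothesis H2. By the Claim just established, Hypothesis H1 forces $\epsilon_0 \ge 1$, and I will show this is incompatible with inequalities (22) and (23), both of which are part of Hypothesis H2. First I would record the two facts about the geometry of $T$ under H1 that will drive everything: that $\epsilon_0 \ge 1$, and that $L \ge \ell > 0$ (immediate since $L = \max_{x\in T}|x|$ and $\ell = \min_{x\in T}|x|$). I would also note $\epsilon_0 > 0$, which follows from H1 since both numerator and denominator of (\ref{KOT:e0}) are positive.

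Next I would exploit inequality (22), namely $2\ell - L\epsilon_0 > 0$, which tells me that the denominator appearing on the right-hand side of inequality (23) is strictly positive. This is precisely what allows me to clear denominators in (23) without reversing the inequality. After rewriting the radicand as $\ell^2\epsilon_0^2 - 2\ell L\epsilon_0 + L^2\epsilon_0^2 = \epsilon_0^2(\ell^2 + L^2) - 2\ell L\epsilon_0$, multiplying (23) through by $2\ell - L\epsilon_0$ and cancelling the common $\ell\epsilon_0$ term, inequality (23) reduces to
\begin{equation*}
    \epsilon_0(\ell - L\epsilon_0) > \sqrt{\epsilon_0^2(\ell^2 + L^2) - 2\ell L \epsilon_0}.
\end{equation*}
I would keep the radicand in the factored form $\epsilon_0\left[\epsilon_0(\ell^2+L^2) - 2\ell L\right]$ so that its sign is transparent.

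The contradiction then comes from a sign analysis of this reduced inequality. On the one hand, the radicand is nonnegative: since $\epsilon_0 \ge 1$ we have $\epsilon_0(\ell^2+L^2) \ge \ell^2 + L^2 \ge 2\ell L$ by the AM--GM inequality, so the right-hand side is a well-defined real number that is $\ge 0$. On the other hand, the left-hand side is $\le 0$: because $L \ge \ell$ and $\epsilon_0 \ge 1$ we get $L\epsilon_0 \ge \ell$, hence $\ell - L\epsilon_0 \le 0$, and since $\epsilon_0 > 0$ the product $\epsilon_0(\ell - L\epsilon_0) \le 0$. A nonpositive quantity cannot strictly exceed a nonnegative one, so the reduced inequality is impossible. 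This contradicts the assumption that (23) holds, completing the argument.

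I expect the main obstacle to be the bookkeeping in the reduction step rather than any deep idea: one must clear the denominator in the correct direction, which is exactly what (22) guarantees, and verify that the radicand is genuinely nonnegative before comparing signs, since a negative radicand would invalidate the clean ``nonpositive versus nonnegative'' comparison. Once the reduced inequality and the two sign facts are in place the contradiction is immediate, and it is precisely the bound $\epsilon_0 \ge 1$ from the Claim that powers both halves of the sign analysis.
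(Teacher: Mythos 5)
Your proposal is correct and follows essentially the same route as the paper's proof: assume H1 and H2 both hold, invoke the Claim that $\epsilon_0 \ge 1$, use inequality (22) to clear the denominator of inequality (23) without reversing it, and conclude with a sign contradiction in which the left-hand side $\epsilon_0(\ell - L\epsilon_0)$ is nonpositive while the square-root side is nonnegative. The only difference is cosmetic: the paper substitutes $L = \ell(1+\delta)$ with $\delta \ge 0$ and works with normalized quantities, whereas you keep $\ell$ and $L$ explicit and justify the radicand's nonnegativity via AM--GM, which changes nothing essential.
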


\begin{proof}
Let us rewrite $L=\ell(1+\delta)$ when $\delta\ge 0$.

Thus we can rewrite (22) as
\[2-(1+\delta)\epsilon_0>0.\]

and we can rewrite (23) as 

\[\epsilon_0>\frac{ \epsilon_0 +\sqrt{\epsilon_0^2-2(1+\delta) \epsilon_0 + (1+\delta)^2\epsilon_0^2}}{2 -(1+\delta)\epsilon_0}.\]

 Assume to the contrary that there exists a set $T$ such that H2 can be satisfied. Then there exists an $\epsilon_0$ and a $\delta$ that satisfies both inequalities (22) and (23).
If $T$ satisfies inequality (22) thus we can obtain an equivalent inequality to (23):

\[\epsilon_0(2 -(1+\delta)\epsilon_0)-\epsilon_0>\sqrt{\epsilon_0^2-2(1+\delta) \epsilon_0 + (1+\delta)^2\epsilon_0^2}.\]

With a little bit of algebra on each side, we obtain
\[-\delta\epsilon_0^2-(\epsilon_0^2-\epsilon_0)>\sqrt{(1+2\delta)(\epsilon_0^2-\epsilon_0)+\epsilon_0^2\delta^2}.\]

By the above Claim A.1, $\epsilon_0\ge1$, thus $\epsilon_0^2\ge\epsilon_0$. That combined with the fact that $\delta\ge0,$ we have that the LHS is non-positive and the RHS is non-negative. This would make the above inequality incorrect, thus we have a contradiction.

Thus when given H1, the inequality (23) is not valid when given inequality (22). Therefore the inequalities in H2 are contradictory, so H2 cannot be satisfied.
\end{proof}

\section{Discussion}

In this section, we proved existence theorems for the rotationally symmetric case, Theorem \ref{cov-rot}. Rotationally symmetric cases are not only practically useful because this case provides a model situation \cite{Oliker_1989}, but also because rotationally symmetric solutions can be used to recover nonrotationally symmetric solutions from irradiance distributions without special symmetry assumptions \cite{Oliker1987NearRS}. We also proved an existence theorem for the case where the points in the target set are sufficiently close to each other, Theorem \ref{continuous}. Theorem \ref{continuous} has the potential to lead to solutions for all kinds of exotic, Lebesgue measurable, target sets. We also proved a uniqueness theorem for the case when the target set is finite, Theorem \ref{unique}, and the general case, Theorem \ref{unique-continuous}. Thus, we make significant progress on the original formulation of the refractor problem.

For Theorem \ref{continuous}, a possible avenue for further research would be to find precise values for $\xi$ and $\gamma$ such that the theorem holds. Another potential avenue for research is to find an explicit algorithm to find proper hyperboloids for the discrete case. In addition, the author believes that Theorems \ref{continuous} and \ref{cov-rot} provide indirect evidence for the following conjecture. 

\begin{conj}
Let $T$ be a target set that satisfies Hypothesis H1 and $\epsilon_0<\lim_{t\to K^+}\frac{1}{t-1}$ for $K=\frac{\max_{x\in T} |x|}{\min_{x\in T} |x|}$ when $\epsilon_0$ is defined by (\ref{e0}).  Assume that we are given a $\gamma>0$ such that $\epsilon_0+\gamma<\lim_{t\to K^+}\frac{1}{t-1}$. Also, assume we are given a nonnegative $g\in L^1(\mathbb{S}^2)$ where $g\equiv 0$ outside $D_T^{\delta_\gamma}$ where $\delta_\gamma=\frac{1}{\epsilon_0 +\gamma}$. Assume we are given a nonnegative $f\in L^1(T)$ and $F$ is the measure defined by (\ref{lebesgue}) such that
\begin{equation}
    F(T) = \mu_{g}(D_T^{\delta_\gamma}).
\end{equation}

Then there exists an $\epsilon_M\in(\epsilon_0+\gamma,\lim_{t\to K^+}\frac{1}{t-1})$ such that there exists a convex refractor $R  \in \mathcal{R}_{convex}^{\epsilon_M} (T)$ where $R$ is a convex weak solution to the refractor problem (\ref{weak3}).
\end{conj}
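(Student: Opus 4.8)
The plan is to follow the discretize-and-pass-to-the-limit strategy already used for Theorem \ref{continuous}, isolating the one ingredient that genuinely goes beyond the present results. Given a general $T$ satisfying Hypothesis H1 together with $\epsilon_0<\lim_{t\to K^+}\frac{1}{t-1}$, I would first partition $T$ into finitely many Borel pieces of small diameter, choose representatives $x_i^k\in\omega_i^k$, and set $F_i^k=F(\omega_i^k)$ exactly as in the proof of Theorem \ref{continuous}. If for each finite configuration one can produce a discrete refractor $R^k$ with $G_{convex}(R^k;x_i^k)=F_i^k$, then the rest of that argument transfers essentially verbatim: the uniform bound (\ref{incl}), the Blaschke selection of a convergent subsequence of the truncated bodies $h_b^k$, the construction of supporting hyperboloids at limit points, and the weak convergence $G^k\to G$ through almost-everywhere convergence of the refractor maps. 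So the conjecture reduces to a single missing step.

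That missing step is a \emph{discrete existence theorem valid for every finite target set satisfying Hypothesis H1}, with no angular-smallness assumption. This is exactly where the paper stops short: Theorem \ref{convRefr} obtains discrete existence only when $T$ lies in a thin cone $S(m_*,\xi)$, and Proposition \ref{line-conv} only when $T$ is radial. Both reduce to Corollary \ref{wsol3coro}, whose hypotheses $G_{convex}(R_\ell;x_i)=0$ and $G_{convex}(R_{\ell i};x_\ell)=0$ demand that a single farthest point $x_\ell$ absorb \emph{all} incident energy once its eccentricity is pushed toward the plane limit. For an angularly spread $T$ no such dominating point need exist, so Corollary \ref{wsol3coro} is unavailable and a fundamentally different existence argument is forced.

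To supply that argument I would run a continuity/degree scheme on the eccentricity space. Writing a discrete refractor as $(\epsilon_1,\dots,\epsilon_k)$ with each $\epsilon_i\ge\epsilon_M$, consider the energy map $\Phi(\epsilon_1,\dots,\epsilon_k)=(G_{convex}(R;x_1),\dots,G_{convex}(R;x_k))$ into the simplex $\Sigma=\{(f_i):f_i\ge0,\ \sum_i f_i=\mu_{g}(D_T^{\delta_\gamma})\}$. This map is continuous by continuity of the energy function (Lemma 8 of \cite{KOT}), and the comparison principle of Theorem \ref{unique} — that two eccentricity vectors producing the same positive distribution must be coordinatewise ordered, with equality in any one coordinate forcing equality throughout — is precisely the monotonicity that, once combined with properness, should make $\Phi$ a degree-one map onto the interior of $\Sigma$. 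The boundary faces where some $f_i=0$ would then be reached by letting the corresponding eccentricities run to $+\infty$, so that those hyperboloids degenerate to the half-spaces of Propositions \ref{eccdom} and \ref{eccplane} and the associated points drop out of the visibility decomposition.

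The hard part will be establishing properness of $\Phi$ on the compactified eccentricity space, i.e.\ ruling out energy escaping to the boundary. As a subfamily of the eccentricities approaches its extreme values the associated visibility sets can collapse or swell discontinuously, and without angular clustering there is no dominating point to pin the mass in place. Controlling this degeneration quantitatively — presumably through the geometry encoded in $\epsilon_0<\lim_{t\to K^+}\frac{1}{t-1}$, which is exactly the inequality that made the farthest point dominate in the radial case of Proposition \ref{line-conv} — is the crux, and it is the reason the statement remains a conjecture rather than a theorem.
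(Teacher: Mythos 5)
The statement you were asked to prove is stated as a \emph{Conjecture} in the paper --- the paper itself offers no proof --- so the only question is whether your proposal actually closes it, and it does not. Your reduction is sound as far as it goes: discretizing $T$, solving the discrete problems, and passing to the limit via Blaschke selection is exactly the argument of Theorem \ref{continuous}, and you correctly identify that everything transfers once one has a discrete existence theorem for \emph{arbitrary} finite target sets satisfying Hypothesis H1 together with $\epsilon_0<\lim_{t\to K^+}\frac{1}{t-1}$, which is precisely where the paper's machinery (Corollary \ref{wsol3coro}, Proposition \ref{line-conv}, Theorem \ref{convRefr}) stops. But your proposed route to that missing theorem is a sketch whose decisive steps are not carried out, and at least two of them are genuinely problematic rather than merely unfinished.

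First, the degree-theoretic setup has a dimension mismatch: the eccentricity space $\{(\epsilon_1,\dots,\epsilon_k):\epsilon_i\ge\epsilon_M\}$ is $k$-dimensional while the simplex $\Sigma$ is $(k-1)$-dimensional, and Theorem \ref{unique} shows that solutions come in coordinatewise-ordered families, so $\Phi$ cannot have isolated preimages; any degree argument needs a normalization (say, pinning one eccentricity) that you never specify, and the comparison principle by itself delivers uniqueness-type statements, not surjectivity. Second, and more seriously, your description of the boundary faces is incorrect: sending $\epsilon_i\to+\infty$ does not make the $i$-th hyperboloid ``drop out.'' By Proposition \ref{eccplane} it converges to its directrix plane, whose polar radius $\frac{|x_i|}{2\langle m,k_{x_i}\rangle}$ is finite and can still realize the supremum in (\ref{equRconvex}) on a set of positive $\mu_{g}$-measure, so for angularly spread $T$ one cannot in general force $G_{convex}(R;x_i)=0$. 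This is exactly the obstruction documented in Section \ref{app:wrongKOT}: the attempt in \cite{KOT} to impose conditions (Hypothesis H2) guaranteeing that such limiting planes capture no energy turned out to be inconsistent with Hypothesis H1. In other words, the ``properness'' issue you flag as the crux is not a technical loose end of your scheme but the very obstruction that makes the statement a conjecture; your proposal usefully reorganizes the difficulty, and its first paragraph is a faithful account of how a proof would be assembled from a discrete existence result, but it does not constitute a proof.
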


\section*{Acknowledgements}

The author would like to especially thank his advisor, Prof. Vladimir Oliker, for introducing him to this problem. Also, the author would like to thank Prof. David Borthwick for helping him with the presentation of this paper.

\bibliography{nearfield}

\end{document}